\newtheorem{theorem}{Theorem}
\newtheorem{corollary}[theorem]{Corollary}
\newtheorem{lemma}[theorem]{Lemma}
\newenvironment{proof}[1][Proof]{\noindent\textbf{#1.} }{\ \rule{0.5em}{0.5em}}
\begin{document}

\title{On the survival of a class of subcritical branching processes in
random environment}
\author{Vincent Bansaye\thanks{%
CMAP, Ecole Polytechnique, Route de Saclay, 91128 Palaiseau Cedex, France;
e-mail: bansaye@polytechnique.edu}\, and Vladimir Vatutin\thanks{%
Department of Discrete Mathematics, Steklov Mathematical Institute, 8,
Gubkin str., 119991, Moscow, Russia; e-mail: vatutin@mi.ras.ru}, }
\maketitle

\begin{abstract}
Let $Z_{n}$ be the number of individuals in a subcritical BPRE evolving in
the environment generated by iid probability distributions. Let
$X$ be the logarithm of the expected offspring size per individual given the
environment. Assuming that the density of $X$ has the form
\begin{equation*}
p_{X}(x)=x^{-\beta -1}l_{0}(x)e^{-\rho x}
\end{equation*}%
for some $\beta >2,$ a slowly varying function $l_{0}(x)$ and $\rho \in
\left( 0,1\right) ,$ we find the asymptotic of the survival probability $%
\mathbb{P}\left( Z_{n}>0\right) $ as $n\rightarrow \infty $, prove a Yaglom
type conditional limit theorem for the process and describe the conditioned
environment. The survival probability decreases exponentially with an
additional polynomial term related to the tail of $X$. The proof uses in
particular a fine study of a random walk (with negative drift and heavy
tails) conditioned to stay positive until time $n$ and to have a small
positive value at time $n$, with $n\rightarrow \infty $.
\end{abstract}

\section{Introduction}

We consider the model of branching processes in random environment
introduced by Smith and Wilkinson \cite{SmWil}. The formal definition of
these \ processes looks as follows. Let $\mathfrak{N}$ be the space of
probability measures on $\mathbb{N}_{0}=\{0,1,2,...\}$. Equipped with the
metric of total variation $\mathfrak{N}$ becomes a Polish space. Let $%
\mathfrak{e}$ be a random variable taking values in $\mathfrak{N}$. An
infinite sequence $\mathcal{E}=(\mathfrak{e}_{1},\mathfrak{e}_{2},\ldots )$
of i.i.d. copies of $\mathfrak{e}$ is said to form a \emph{random environment%
}. A sequence of $\mathbb{N}_{0}$-valued random variables $%
Z_{0},Z_{1},\ldots $ is called a \emph{branching process in the random
environment} $\mathcal{E}$, if $Z_{0}$ is independent of $\mathcal{E}$ and,
given $\mathcal{E},$ the process $Z=(Z_{0},Z_{1},\ldots )$ is a Markov chain
with
\begin{equation}
\mathcal{L}\left( Z_{n}\;|\;Z_{n-1}=z_{n-1},\,\mathcal{E}=(e_{1},e_{2},%
\ldots )\right) \ =\ \mathcal{L}\left( \xi _{n1}+\cdots +\xi
_{nz_{n-1}}\right)  \label{transition}
\end{equation}%
for every $n\geq 1,\,z_{n-1}\in \mathbb{N}_{0}$ and $e_{1},e_{2},\ldots \in
\mathfrak{N}$, where $\xi _{n1},\xi _{n2},\ldots $ are i.i.d. random
variables with distribution $\mathfrak{e}_{n}$. Thus,
\begin{equation}
Z_{n}=\sum_{i=1}^{Z_{n-1}}\xi _{ni}  \label{defZ}
\end{equation}%
and, given the environment, $Z$ is an ordinary inhomogeneous Galton-Watson
process. We will denote the corresponding probability measure and
expectation on the underlying probability space by $\mathbb{P}$ and $\mathbb{%
E}$, respectively.

Let%
\begin{equation*}
X=\log \left( \sum_{k\geq 0}k\mathfrak{e}\left( \{k\}\right) \right) ,\qquad
X_{n}=\log \left( \sum_{k\geq 0}k\mathfrak{e}_{n}\left( \{k\}\right) \right)
,\,n=1,2,...,\
\end{equation*}%
be the logarithms of the expected offspring size per individual in the
environments and
\begin{equation*}
S_{0}=0,\ S_{n}=X_{1}+\cdots +X_{n},n\geq 1,
\end{equation*}%
be their partial sums. \newline

This paper deals with the subcritical branching processes in random
environment, i.e., in the sequel we always assume that
\begin{equation}
\mathbb{E}\left[ X\right] =-b<0.  \label{Expect}
\end{equation}%
The subcritical branching processes in random environment admit an
additional classification, which is based on the properties of the moment
generating function
\begin{equation*}
\varphi (t)=\mathbb{E}\left[ e^{tX}\right] =\mathbb{E}\left[ \left(
\sum_{k\geq 0}k\mathfrak{e}\left( \{k\}\right) \right) ^{t}\right] ,\quad
t\geq 0.
\end{equation*}%
Clearly, $\varphi ^{\prime }(0)=$ $\mathbb{E}\left[ X\right] $. Let
\begin{equation*}
\rho _{+}=\sup \left\{ t\geq 0:\varphi (t)<\infty \right\}
\end{equation*}%
and $\rho _{min}$ be the point where $\varphi (t)$ attains its minimal value
on the interval $[0,\rho _{+}\wedge 1]$. Then a subcritical branching
process in random environment is called
\begin{equation*}
\begin{array}{c}
\text{weakly subcritical if }\rho _{min}\in \left( 0,\rho _{+}\wedge
1\right) ,\qquad \qquad \qquad \qquad \qquad \qquad \qquad \ \ \  \\
\text{intermediately subcritical if }\rho _{min}=\rho _{+}\wedge 1>0\text{ \
\ and \ \ \ \ \ \ \ \ }\varphi ^{\prime }(\rho _{min})=0,\  \\
\text{strongly subcritical \ if \ \ \ \ \ \ \ }\rho _{min}=\rho _{+}\wedge 1%
\text{ \ \ \ \ \ \ \ and \ \ \ \ \ \ \ \ }\varphi ^{\prime }(\rho _{min})<0.%
\end{array}%
\end{equation*}

Note that this classification is slightly different from that given in \cite%
{bgk}. Weakly subcritical and intermediately subcritical branching processes
have been studied in \cite{gkv, af98, abkv1, abkv2} in detail. Let us recall
that $\varphi ^{\prime }(\rho _{+}\wedge 1)>0$ for the weakly subcritical
case. \newline
The strongly subcritical case is also well studied for the case $\rho
_{+}\geq 1$, i.e., if $\rho _{min}=\rho _{+}\wedge 1=1$ and $\varphi
^{\prime }(1)<0$. In particular, it was shown in \cite{gkv} and refined in
\cite{agkv2} that if $\varphi ^{\prime }(1)=\mathbb{E}\left[ Xe^{X}\right]
<0 $ and $\mathbb{E}\left[ Z_{1}\log ^{+}Z_{1}\right] <\infty $ then, as $%
n\rightarrow \infty $%
\begin{equation}
\mathbb{P}\left( Z_{n}>0\right) \sim K\left( \mathbb{E}\left[ \xi \right]
\right) ^{n},\,K>0,  \label{AsQQ}
\end{equation}%
and, in addition,%
\begin{equation}
\lim_{n\rightarrow \infty }\mathbb{E}\left[ s^{Z_{n}}|Z_{n}>0\right] =\Psi
(s),  \label{Limsub}
\end{equation}%
where $\Psi (s)$ is the probability generating function of a proper
nondegenerate random variable on $\mathbb{Z}_{+}$. This statement is
actually an extension of the classical result for the ordinary subcritical
Galton-Watson branching processes.

\section{Main results}

Our main concern in this paper is the strongly subcritical branching
processes in random environment with $\rho _{+}\in (0,1)$. More precisely,
we assume that the following condition is valid: \newline

\textbf{Hypothesis A.} The distribution of $X$ has density \textbf{\ }%
\begin{equation}
p_{X}\left( x\right) =\frac{l_{0}(x)}{x^{\beta +1}}e^{-\rho x},
\label{remainder0}
\end{equation}%
where $l_{0}(x)$ is a function slowly varying at infinity, $\beta >2,$ $\rho
\in (0,1)$ and, in addition, \newline
\begin{equation}
\quad \varphi ^{\prime }(\rho )=\mathbb{E}\left[ Xe^{\rho X}\right] <0.
\label{Ctail1}
\end{equation}

This assumption can be relaxed by assuming that $p_{X}(x)$ is the density of
$X$ for $x$ large enough, or that the tail distribution $\mathbb{P}(X\in
\lbrack x,x+\Delta))\sim \int_{x}^{x+\Delta}p_{X}(y)dy$ for $x\rightarrow
\infty $ (uniformly with respect to $\Delta\leq 1$). \newline
\ \ \ \ Clearly, $\rho =\rho _{+}<1$ under Hypothesis A. Observe that the
case $\rho =\rho _{+}=0$ not included in Hypothesis A has been studied in
\cite{vz} and yields a new type of the asymptotic behavior of subcritical
branching processes in random environment. Namely, it was established that,
as $n\rightarrow \infty $%
\begin{equation}
\mathbb{P}\left( Z_{n}>0\right) \sim K\mathbb{P}\left( X>nb\right) =K\frac{%
l_{0}(nb)}{\left( nb\right) ^{\beta }},\,K>0,  \label{casVZ}
\end{equation}%
so that the survival probability decays with a polynomial rate only.
Moreover, for any $\varepsilon >0,$ some constant $\sigma >0$ and any $x\in
\mathbb{R}$%
\begin{equation*}
\mathbb{P}\left( \frac{\log Z_{n}-\log Z_{\left[ n\varepsilon \right]
}+n\left( 1-\varepsilon \right) b}{\sigma \sqrt{n}}\leq x\,|\,Z_{n}>0\right)
=\mathbb{P}\left( B_{1}-B_{\varepsilon }\leq x\right)
\end{equation*}%
where $B_{t}$ is a standard Brownian motion. Therefore, given the survival
of the population up to time $n$, the number of individuals in the process
at this moment tends to infinity as $n\rightarrow \infty $ that is not the
case for other types of subcritical processes in random environment. \newline

The goal of the paper is to investigate the asymptotic behavior of the
survival probability of the process meeting Hypothesis A and to prove a
Yaglom-type conditional limit theorem for the distribution of the number of
individuals. To this aim we additionally assume that the sequence of
conditional probability measures
\begin{equation*}
\mathbb{P}^{\left[ x\right] }\left( \cdot \right) =\mathbb{P}\left( \cdot \
|\ X=x\right)
\end{equation*}%
is well defined for $x\rightarrow \infty $ under Hypothesis $A$. We provide
in Section \ref{Ex} natural examples when this assumption and Hypothesis $B$
below are valid.

Denote by $\mathfrak{L}=\{\mathcal{L}\}$ the set of all proper probability
measures $\mathcal{L}(\cdot )$ of nonnegative random variables. Our next
condition concerns the behavior of the measures $\mathbb{P}^{\left[ x\right]
}$ as $x\rightarrow \infty:$ \newline

\textbf{Hypothesis B.} There exists a probability measure $\mathbb{P}^{\ast
} $ on\textbf{\ } $\mathfrak{L}$ such that, as $x\rightarrow \infty$,
\begin{equation*}
\mathbb{P}^{\left[ x\right] }\Longrightarrow \mathbb{P}^{\ast }
\end{equation*}%
where the symbol $\Longrightarrow $ stands for the weak convergence of
measures.

Setting
\begin{equation*}
a=-\frac{\varphi ^{\prime }\left( \rho \right) }{\varphi \left( \rho \right)
}>0,
\end{equation*}%
we are now ready to formulate the first main result of the paper.

\begin{theorem}
\label{T_Extin} If
\begin{equation}
\mathbb{E}\left[ -\log \left( 1-\mathfrak{e}\left( \{0\}\right) \right) %
\right] <\infty ,\quad \mathbb{E}\left[ e^{-X}\sum_{k\geq 1}\mathfrak{e}%
\left( \{k\}\right) k\log k\right] <\infty  \label{Mom}
\end{equation}%
and Hypotheses A and B are valid, then there exists a constant $C_{0}>0$
such that, as $n\rightarrow \infty $ 
\begin{equation}
\mathbb{P}\left( Z_{n}>0\right) \sim C_{0}\rho \varphi ^{n-1}\left( \rho
\right) e^{an\rho }\mathbb{P}\left( X>an\right) \sim C_{0}\rho \varphi
^{n-1}\left( \rho \right) \frac{l_{0}(n)}{\left( an\right) ^{\beta +1}}.
\label{AsTotal}
\end{equation}
\end{theorem}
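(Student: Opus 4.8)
The plan is to tilt the environment exponentially at the point $\rho$ and thereby reduce the assertion to a sharp estimate for a random walk with negative drift and a regularly varying right tail, combined with the Geiger--Kersting representation of the conditional survival probability. Introduce the probability $\widetilde{\mathbb P}$ defined by $\widetilde{\mathbb E}[g(\mathfrak e_{1},\dots,\mathfrak e_{n})]=\varphi(\rho)^{-n}\mathbb E[g(\mathfrak e_{1},\dots,\mathfrak e_{n})e^{\rho S_{n}}]$. Since $e^{\rho S_{n}}=\prod_{i}e^{\rho X_{i}}$ has product form, under $\widetilde{\mathbb P}$ the environment is again i.i.d.\ and the conditional law of $\mathfrak e$ given $X=x$ is left unchanged, so Hypothesis~B still applies verbatim; by \eqref{remainder0} the density of $X$ under $\widetilde{\mathbb P}$ is $\widetilde p_{X}(x)=l_{0}(x)\varphi(\rho)^{-1}x^{-\beta-1}$, so $S$ now has negative drift $\widetilde{\mathbb E}[X]=\varphi'(\rho)/\varphi(\rho)=-a$, finite variance (here $\beta>2$ is used) and a regularly varying right tail of index $\beta$. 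Undoing the tilt in $\mathbb P(Z_{n}>0)=\mathbb E[\mathbb P(Z_{n}>0\mid\mathcal E)]$ yields
\begin{equation*}
\mathbb P(Z_{n}>0)=\varphi(\rho)^{n}\,\widetilde{\mathbb E}\bigl[e^{-\rho S_{n}}\,\mathbb P(Z_{n}>0\mid\mathcal E)\bigr],
\end{equation*}
and since $e^{an\rho}\mathbb P(X>an)\sim\varphi(\rho)\rho^{-1}\widetilde p_{X}(an)$ and $l_{0}(an)\sim l_{0}(n)$, it suffices to prove $\widetilde{\mathbb E}[e^{-\rho S_{n}}\mathbb P(Z_{n}>0\mid\mathcal E)]\sim C_{0}\widetilde p_{X}(an)$ for some $C_{0}>0$; this $C_{0}$ is then the constant in \eqref{AsTotal}.

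To control the integrand I would invoke the Geiger--Kersting identity: with $f_{k}$ the offspring generating function in environment $\mathfrak e_{k}$ and $f_{k,n}=f_{k}\circ\dots\circ f_{n}$,
\begin{equation*}
\frac{1}{\mathbb P(Z_{n}>0\mid\mathcal E)}=e^{-S_{n}}+\sum_{k=1}^{n}e^{-S_{k-1}}\,g_{k}\bigl(f_{k+1,n}(0)\bigr),\qquad g_{k}(s):=\frac{1}{1-f_{k}(s)}-\frac{1}{e^{X_{k}}(1-s)}\ge0.
\end{equation*}
The moment assumptions \eqref{Mom} control the nonnegative variables $g_{k}$ in the integrable sense used in \cite{gkv,agkv2}, and permit one to replace $\mathbb P(Z_{n}>0\mid\mathcal E)$, up to factors that are harmless after integration, by $\bigl(\sum_{k=0}^{n}e^{-S_{k}}\bigr)^{-1}$, i.e.\ by a quantity of order $e^{\min_{0\le k\le n}S_{k}}$. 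Hence $e^{-\rho S_{n}}\mathbb P(Z_{n}>0\mid\mathcal E)$ is of order $e^{-\rho S_{n}+\min_{k}S_{k}}$, and under $\widetilde{\mathbb P}$ it is non-negligible only on the event that the running minimum $\min_{0\le k\le n}S_{k}$ stays bounded below and, because of the factor $e^{-\rho S_{n}}$, that $S_{n}$ stays bounded.

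A negative-drift, regularly varying walk can keep its running minimum bounded on $[0,n]$ only by making a single big jump of size $\approx an$ within a tight window near the start --- which, by \eqref{remainder0}, is essentially the first increment $X_{1}$ --- and then undergoing its typical linear descent, which the factor $e^{-\rho S_{n}}$ further forces to land with $S_{n}=O(1)$. This is precisely the ``random walk with negative drift and heavy tails conditioned to stay positive until time $n$ and to have a small positive value at time $n$'' of the abstract, and a local limit theorem together with ballot-type estimates should give, for every fixed $\Delta>0$,
\begin{equation*}
\widetilde{\mathbb P}\Bigl(\min_{0\le k\le n}S_{k}\ge0,\ S_{n}\in[0,\Delta]\Bigr)\sim c_{\Delta}\,\frac{l_{0}(an)}{(an)^{\beta+1}},
\end{equation*}
the exponent $\beta+1$ --- the \emph{density} exponent of $X$, not the tail exponent $\beta$ --- being the signature of the extra constraint on $S_{n}$. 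On this event the first environment $\mathfrak e_{1}$, given $X_{1}=x$, obeys $\mathbb P^{[x]}$ with $x\to\infty$ and therefore converges to $\mathbb P^{\ast}$ by Hypothesis~B; the environments near time $n$ stabilise with an explicit limit; and, since in the sum above only the term $k=1$ and the $O(1)$ terms with $k$ close to $n$ fail to be exponentially small, $\mathbb P(Z_{n}>0\mid\mathcal E)$ converges to a random variable $\theta\in(0,1]$ determined by these limiting environments. Dominated convergence then gives $\widetilde{\mathbb E}[e^{-\rho S_{n}}\mathbb P(Z_{n}>0\mid\mathcal E)]\sim C_{0}\widetilde p_{X}(an)$, with $C_{0}$ the expectation of $\theta e^{-\rho S_{\infty}}$ against the limiting conditioned law.

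The hard part is twofold. On the random-walk side one needs genuinely sharp asymptotics (not merely logarithmic ones) for a negative-drift, regularly varying walk that is simultaneously conditioned to keep its minimum bounded below and to land at a bounded endpoint: establishing the single-big-jump localisation with the correct constant $c_{\Delta}$ requires delicate local limit theorems and first-passage estimates, and this is where most of the technical effort goes. On the branching side one must (i) use \eqref{Mom} to control $\mathbb P(Z_{n}>0\mid\mathcal E)$ tightly enough to justify both the reduction to $(\sum_{k}e^{-S_{k}})^{-1}$ and the final dominated convergence, and (ii) show that the discarded configurations --- unbounded running minimum, large $S_{n}$, the big jump occurring after the first step or split among several increments --- contribute an amount smaller than $l_{0}(n)(an)^{-\beta-1}$ by a true polynomial margin. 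Hypothesis~B is exactly what pins down the limiting environment at the big jump and hence yields both the existence of $\theta$ and the strict positivity of $C_{0}$.
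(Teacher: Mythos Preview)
Your overall architecture is exactly the paper's: tilt by $e^{\rho S_{n}}/\varphi(\rho)^{n}$, obtain under the tilted measure a negative-drift walk with regularly varying right tail, reduce to showing $\widetilde{\mathbb E}\bigl[e^{-\rho S_{n}}\mathbb P(Z_{n}>0\mid\mathcal E)\bigr]\sim C_{0}\,\widetilde p_{X}(an)$ via a single-big-jump localisation, and invoke Hypothesis~B at the jump. The paper does not use the Geiger--Kersting identity; it uses the cruder bound $\mathbb P(Z_{n}>0\mid\mathcal E)\le e^{\min_{k}S_{k}}$ for all the truncation estimates and works directly with $1-f_{0,n}(0)=1-f_{0,j-1}\bigl(f_{j}(f_{j,n}(0))\bigr)$ for the sharp limit. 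Your substitution is in principle workable, though it is the latter representation, not Geiger--Kersting, that interacts cleanly with Hypothesis~B.

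There is, however, a genuine error in your localisation. You assert that the big jump is ``essentially the first increment $X_{1}$'' and list ``the big jump occurring after the first step'' among the discarded configurations. This is false: under the tilted measure the jump time is \emph{tight but not degenerate}. The constraint $\min_{k}S_{k}\ge -N$ forces the jump to occur at a bounded index $j$, but every fixed $j\ge1$ contributes a positive share. The paper makes this explicit (its Theorem~3): conditionally on $\{Z_{n}>0\}$ the jump index $\varkappa$ converges in law to a proper random variable with $\pi_{j}>0$ for all $j$, and
\[
C_{0}=\sum_{j=1}^{\infty}\varphi(\rho)^{-(j-1)}\int_{-\infty}^{\infty}\mathbb E\bigl[1-f_{0,j-1}\bigl(g_{j}(e^{v}W_{j})\bigr)\bigr]e^{-\rho v}\,dv,
\]
where $g_{j}$ has law $\mathbb P^{\ast}$ and $W_{j}$ is the a.s.\ limit of $(1-f_{n,j}(0))e^{-(S_{n}-S_{j})}$. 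Your outline, as written, would recover only the $j=1$ summand. The repair is to decompose over the index $j$ of the big jump and sum, but then the description of the limiting environment must change accordingly: the first $j-1$ environments are unconditioned and enter through $f_{0,j-1}$; Hypothesis~B is applied to $\mathfrak e_{j}$; and the tail $f_{j+1,n}(0)$ is handled not by ``environments near time $n$ stabilising'' but by the time-reversal $f_{j,n}\overset{d}{=}f_{n,j}$, after which $W_{n,j}\downarrow W_{j}$ a.s.\ by monotonicity. It is in this last step, via Athreya--Karlin, that the moment conditions \eqref{Mom} are actually used to guarantee $W_{j}>0$ a.s.\ and hence $C_{0}>0$.
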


We stress that $\varphi (\rho )\in (0,1)$. Moreover, the explicit form of $%
C_{0}$ can be found in (\ref{C_finsurviv}). The proof is given in Section %
\ref{ProofMT}. We now quickly explain this asymptotic behavior and give at
the same time an idea of the proof. In the next Section, some examples of
processes satisfying the assumptions required in Theorem~\ref{T_Extin} can
be found.

For the proof, we introduce in Section \ref{NewP} a new probability measure $%
\mathbf{P}$. Under this new probability measure, the random walk $\mathbf{S}%
=\left( S_{n},n\geq 0\right) $ has the drift $-a<0$ and the heavy tail
distribution of its increments has polynomial decay $\beta .$ Adding that $%
\mathbf{E}\left[ \exp (\rho X)\right] =\varphi \left( \rho \right) $, we
will get the survival probability as
\begin{equation*}
\varphi ^{n}\left( \rho \right) \mathbf{E}\left[ e^{-\rho S_{n}}\mathbf{P}%
(Z_{n}>0|\mathfrak{e})\right] \approx const\times \varphi ^{n}\left( \rho
\right) \mathbf{P}(L_{n}\geq 0,S_{n}\leq N)
\end{equation*}%
where $L_{n}$ is the minimum of the random walk up to time $n$ and $N$ is
(large but) fixed.

We then make use of the properties of random walks with negative drift and
heavy tails of increments established in \cite{VVI} to show that
\begin{equation*}
\mathbf{P}(L_{n}\geq 0,S_{n}\leq N)\approx const\times \mathbf{P}(X_{1}\in
\lbrack an-M\sqrt{n},an+M\sqrt{n}],S_{n}\in \lbrack 0,1])
\end{equation*}%
for $n$ large enough and conclude using the central limit theorem.

$\newline
$

Our second main result is a Yaglom-type conditional limit theorem.

\begin{theorem}
\label{T_condEnd} Under the conditions of Theorem \ref{T_Extin},
\begin{equation*}
\lim_{n\rightarrow \infty }\mathbb{E}\left[ s^{Z_{n}}|Z_{n}>0\right] =\Omega
(s),
\end{equation*}%
where \ $\Omega (s)$ is the probability generating function of a proper
nondegenerate random variable on $\mathbb{Z}_{+}$.
\end{theorem}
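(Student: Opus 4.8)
The plan is to follow the now-classical route for Yaglom-type limit theorems in branching processes in random environment, adapting the change-of-measure machinery already set up for Theorem \ref{T_Extin}. Recall that under the new measure $\mathbf{P}$ introduced in Section \ref{NewP} the walk $\mathbf{S}$ has negative drift $-a$ and heavy-tailed (index $\beta$) increments, and that
\[
\mathbb{E}\left[s^{Z_n}\mathbf{1}_{\{Z_n>0\}}\right]=\varphi^{n}(\rho)\,\mathbf{E}\left[e^{-\rho S_n}\bigl(\mathbf{E}[s^{Z_n}\mid\mathfrak{e}]-\mathbf{P}(Z_n=0\mid\mathfrak{e})\bigr)\right].
\]
Dividing by $\mathbb{P}(Z_n>0)$ and using the asymptotics \eqref{AsTotal}, the statement reduces to showing that for every fixed $s\in[0,1)$ the ratio
\[
\frac{\mathbf{E}\left[e^{-\rho S_n}\bigl(\mathbf{E}[s^{Z_n}\mid\mathfrak{e}]-\mathbf{P}(Z_n=0\mid\mathfrak{e})\bigr)\right]}{\mathbf{E}\left[e^{-\rho S_n}\mathbf{P}(Z_n>0\mid\mathfrak{e})\right]}
\]
converges to some limit $1-\Omega(s)$, and then to identify $\Omega$ as a genuine probability generating function of a proper, nondegenerate law on $\mathbb{Z}_+$.

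First I would localize the environment the same way it is localized in the proof of Theorem \ref{T_Extin}: the dominant contribution comes from environments where one early increment $X_1$ (or some $X_j$ with $j$ bounded) is of size $\approx an$, the remaining walk stays positive, and $S_n\in[0,1]$ (equivalently, $L_n\ge 0$ and $S_n$ small). On this event the population first suffers a single ``giant'' reproduction step, after which it evolves, conditionally, like a supercritical-looking process forced down to a bounded level. The key probabilistic input is the decomposition of $Z_n$ via the spine/most recent bottleneck: conditionally on survival and on the localized environment, the generating function $\mathbf{E}[s^{Z_n}\mid\mathfrak{e}]$ should be controlled by looking backwards from time $n$ and showing that $Z_n$, given $\{Z_n>0\}$, stabilizes in law. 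Concretely, I would use the standard recursive relation $f_{k,n}(s)=f_k\circ f_{k+1,n}(s)$ for the quenched generating functions and the fact that, under Hypothesis B, the one-step reproduction laws near the atypical time converge weakly (that is the role of $\mathbb{P}^{*}$), so that $\mathbf{E}[s^{Z_n}\mid\mathfrak{e}]-\mathbf{P}(Z_n=0\mid\mathfrak{e})$ is, up to negligible terms, a product of a factor depending on the ``bad'' step times a factor that converges. The moment conditions \eqref{Mom} are exactly what is needed to apply the dominated-convergence and tightness arguments (they control $-\log(1-\mathfrak{e}(\{0\}))$ and the $k\log k$ moment), guaranteeing that no mass escapes to $+\infty$ and that the limiting law is proper.

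The cleanest implementation is probably to prove convergence of $\mathbb{P}(Z_n=m\mid Z_n>0)$ for each fixed $m\ge 1$ by the same localization-plus-weak-convergence scheme, get tightness from $\mathbb{E}[s^{Z_n}\mid Z_n>0]\to\Omega(s)$ with $\Omega(1^-)=1$ (properness), and deduce nondegeneracy by exhibiting $\Omega(0)=\lim\mathbb{P}(Z_n=0\mid Z_n>0)\neq 1$ — but $\mathbb{P}(Z_n=0\mid Z_n>0)=0$ trivially, so nondegeneracy really means $\Omega$ is not the p.g.f. of a constant; this follows because the conditioned environment (described via $\mathbb{P}^{*}$ and the random-walk-conditioned-to-stay-positive picture) is genuinely random, so the conditional law of $Z_n$ retains a nontrivial mixing component in the limit. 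I would make this precise by computing $\Omega$ as an explicit functional: an expectation, against the limiting conditioned-environment measure, of the probability generating function obtained by iterating the limiting reproduction maps; its nondegeneracy then reduces to the non-a.s.-constancy of that functional, which holds because $\mathbb{P}^{*}$ is a nondegenerate law on $\mathfrak{L}$.

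The main obstacle will be the uniform control needed to pass to the limit inside $\mathbf{E}[e^{-\rho S_n}(\cdots)]$: one must show that the contribution of environments outside the localized region — in particular environments with very early extinction, or with the ``giant'' step occurring at an unbounded time, or with the tail of the walk behaving atypically — is $o\bigl(\mathbf{P}(L_n\ge0,S_n\le N)\bigr)$, and simultaneously that within the localized region the quenched generating functions $f_{k,n}(s)$ are equicontinuous enough to converge. This is precisely where the fine random-walk estimates from \cite{VVI} (for the walk with negative drift and heavy tails, conditioned to stay positive and end small) have to be combined with the quenched branching-process estimates; replicating, in the heavy-tailed regime, the ``environment seen from the bottleneck'' arguments used in \cite{gkv,agkv2} for the light-tailed strongly subcritical case is the technical heart of the proof.
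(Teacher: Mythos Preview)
Your framework is correct and matches the paper's: change of measure, then study the ratio of $\mathbf{E}$-expectations. But you are making the argument much harder than it needs to be, and you miss the single observation that reduces Theorem~\ref{T_condEnd} to a one-line corollary of the machinery already built for Theorem~\ref{T_Extin}.

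The point is that
\[
1-\mathbb{E}\bigl[s^{Z_n}\,\big|\,Z_n>0\bigr]
=\frac{\mathbb{E}\bigl[1-f_{0,n}(s)\bigr]}{\mathbb{E}\bigl[1-f_{0,n}(0)\bigr]}
=\frac{\mathbf{E}\bigl[(1-f_{0,n}(s))e^{-\rho S_n}\bigr]}{\mathbf{E}\bigl[(1-f_{0,n}(0))e^{-\rho S_n}\bigr]}.
\]
The denominator is exactly the object whose asymptotics $\sim C_0 b_n$ were obtained in Lemma~\ref{L_Extin}/Lemma~\ref{L_condbig}. The numerator has the \emph{identical} structure: every step of the localization (Lemmas~\ref{L_Begend}--\ref{L_remF2}), the big-jump decomposition (Lemma~\ref{L_Negl2}), and the limiting identification (Lemma~\ref{L_condbig}) goes through verbatim once you replace
\[
W_{n,j}=\frac{1-f_{n,j}(0)}{e^{S_n-S_j}}
\qquad\text{by}\qquad
W_{n,j}(s)=\frac{1-f_{n,j}(s)}{e^{S_n-S_j}},
\]
which is again monotone in $n$ and converges a.s.\ to some $W_j(s)\in(0,1]$ by the same Athreya--Karlin result. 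One obtains $b_n^{-1}\mathbf{E}[(1-f_{0,n}(s))e^{-\rho S_n}]\to\Omega_0(s)$ with the same explicit formula as $C_0$ but with $W_j$ replaced by $W_j(s)$, and then $\Omega(s)=1-C_0^{-1}\Omega_0(s)$. Properness ($\Omega(1^-)=1$) and nondegeneracy follow directly from this formula: $W_j(s)\le 1-s\to0$ gives $\Omega_0(1^-)=0$ by dominated convergence, and strict monotonicity of $f_{0,j-1}$ and $g_j$ gives $\Omega_0(s)<\Omega_0(0)=C_0$ for $s>0$.

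So there is no need for a spine decomposition, for pointwise convergence of $\mathbb{P}(Z_n=m\mid Z_n>0)$, or for separate equicontinuity arguments on the quenched generating functions; those routes could be made to work, but they re-prove from scratch what Lemma~\ref{L_condbig} already delivers for free. (Minor slip: the ratio you displayed is $\mathbb{E}[s^{Z_n}\mid Z_n>0]$ itself and converges to $\Omega(s)$, not to $1-\Omega(s)$.)
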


We see that, contrary to the case $\rho _{min}=\rho _{+}\wedge 1=0$ \ \ this
Yaglom-type limit theorem has the same form as for the ordinary
Galton-Watson subcritical processes.

Introduce a sequence of generating functions
\begin{equation*}
f_{n}(s)=f(s;\mathfrak{e}_{n})\ =\ \sum_{k=0}^{\infty }\,\mathfrak{e}%
_{n}(\{k\})s^{k},\qquad 0\leq s\leq 1,
\end{equation*}%
specified by the environmental sequence $\left( \mathfrak{e}_{1},\mathfrak{e}%
_{2},...,\mathfrak{e}_{n},...\right) $ and denote
\begin{equation}
f_{j,n}=f_{j+1}\circ \cdots \circ f_{n},\quad f_{n,j}=f_{n}\circ \cdots
\circ f_{j+1}\quad (j<n),~f_{n,n}=Id.  \label{NotIterat}
\end{equation}
For every pair $n\geq j\geq 1$, we define a random function $g_{j}:\mathbb{R}%
_{+}\rightarrow \left[ 0,1\right] ,$ a tuple of random variables
\begin{equation}
W_{n,j}=\frac{1-f_{n,j}(0)}{e^{S_{n}-S_{j}}}  \label{DefWn}
\end{equation}%
and a random variable $W_{j}$ on $[0,1]$ such that

\begin{itemize}
\item[(i)] the distribution of $g_{j}$ is given by $\mathbb{P}^{\ast }$ and
that of $W_{j}$ is given by the (common) distribution of $\lim_{n\rightarrow
\infty }W_{n,j}$, which exists by monotonicity;

\item[(ii)] $f_{0,j-1}$, $g_{j}$ and $(W_{n,j},W_{j},f_{k}:k\geq j+1)$ are
independent for each $n\geq j$
(it is always possible, the initial probability
space being extended if required).
\end{itemize}

Then we can set
\begin{equation*}
c_{j}=\int_{-\infty }^{\infty }\mathbb{E}\left[
1-f_{0,j-1}(g_{j}(e^{v}W_{j}))\right] e^{-\rho v}dv
\end{equation*}%
and state the following result. It describes the environments that provide
survival of the population until time $n$. 

\begin{theorem}
For each $j\geq 1$,\newline
i)  the following  limit exists
$$\pi _{j}=\lim_{n\rightarrow \infty }\mathbb{P}(X_{j}\geq
an/2|Z_{n}>0)=\frac{c_{j}\varphi ^{-j}(\rho )}{\sum_{k\geq 1}c_{k}\varphi ^{-k}(\rho )}.$$
ii) 
for each measurable and bounded function $F:\mathbb{R}%
^{j}\rightarrow \mathbb{R}$ and each family of measurable uniformly bounded
functions $F_{n}:\mathbb{R}^{n+1}\rightarrow \mathbb{R}$ the difference
\begin{eqnarray*}
&&\mathbb{E}\left[ F(S_{0},\ldots
,S_{j-1})F_{n-j}(S_{n}-S_{j-1},X_{j+1},\ldots ,X_{n})|Z_{n}>0,\ X_{j}\geq
an/2\right] \\
&&\qquad -\,c_j^{-1}\mathbb{E}\left[ F(S_{0},\ldots
,S_{j-1})\int_{-\infty }^{\infty }F_{n-j}(v,X_n,...,X_{j+1})G_{j,n}(v)dv%
\right]
\end{eqnarray*}%
goes to $0$ as $n\rightarrow \infty $, where
\begin{equation*}
G_{j,n}(v):=\left( 1-f_{0,j-1}(g_{j}(e^{v}W_{n,j}))\right) e^{-\rho v}.
\end{equation*}
\end{theorem}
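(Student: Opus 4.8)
My plan is to mimic, with bookkeeping, the proof of Theorem \ref{T_Extin}. I work throughout under the measure $\mathbf{P}$ of Section \ref{NewP}: under $\mathbf{P}$ the $\mathfrak{e}_k$ are i.i.d., $\mathbf{S}$ has drift $-a<0$, the increment density is $\varphi^{-1}(\rho)l_0(x)x^{-\beta-1}$, and $d\mathbb{P}/d\mathbf{P}$ restricted to $\sigma(\mathfrak{e}_1,\dots,\mathfrak{e}_m)$ equals $\varphi^{m}(\rho)e^{-\rho S_m}$. Both parts follow once I obtain, for every fixed $j$, the asymptotics of $I_n:=\mathbb{E}[F(S_0,\dots,S_{j-1})F_{n-j}(S_n-S_{j-1},X_{j+1},\dots,X_n)\mathbf{1}_{\{X_j\ge an/2\}}\mathbf{1}_{\{Z_n>0\}}]$. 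Since $\{X_j\ge an/2\}$ is environment‑measurable,
$$I_n=\varphi^{n}(\rho)\,\mathbf{E}\big[e^{-\rho S_n}F\,F_{n-j}\,(1-f_{0,n}(0))\,\mathbf{1}_{\{X_j\ge an/2\}}\big],$$
and under $\mathbf{P}$ the three blocks $(\mathfrak{e}_1,\dots,\mathfrak{e}_{j-1})$, $\mathfrak{e}_j$, $(\mathfrak{e}_{j+1},\dots,\mathfrak{e}_n)$ are independent. By the one‑big‑jump analysis of \cite{VVI} that drives Theorem \ref{T_Extin}, the mass is carried by environments in which $X_j$ is the unique large increment, tuned so that $S_n$ stays bounded: if $X_j\gg an$ then $S_n\to+\infty$ and $e^{-\rho S_n}\to0$, while if $an/2\le X_j\le an-C\sqrt n$ then $S_n\to-\infty$ and $1-f_{0,n}(0)\le e^{S_n}\to0$ because $\rho<1$; double‑jump environments are excluded the same way. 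So the effective range is $X_j=an+O(\sqrt n)$, on which $v:=S_n-S_{j-1}$ is $O(1)$ for fixed $j$.

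Next I factorise the conditional survival probability at the big‑jump time. By the cocycle identity $f_{0,n}=f_{0,j-1}\circ f_j\circ f_{j,n}$ together with (\ref{DefWn}) — after reversing the i.i.d. block $(\mathfrak{e}_{j+1},\dots,\mathfrak{e}_n)$, which is exactly what turns the forward composition $f_{j,n}$ into the $f_{n,j}$ of (\ref{DefWn}) and reverses the arguments of $F_{n-j}$ — one has $1-f_{j,n}(0)=W_{n,j}e^{S_n-S_j}$, hence $1-f_{0,n}(0)=1-f_{0,j-1}\big(f_j(1-W_{n,j}e^{-X_j}e^{v})\big)$. Hypothesis~B gives, as $X_j\to\infty$, $f_j(1-\lambda e^{-X_j})\to g_j(\lambda)$ (the offspring law given $X_j=x$, rescaled by its mean $e^x$, converging to the law with Laplace transform $g_j\sim\mathbb{P}^\ast$), so $1-f_{0,n}(0)=(1+o(1))\big(1-f_{0,j-1}(g_j(e^{v}W_{n,j}))\big)$ on the effective range.

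Now I change variables $X_j\mapsto v$ and identify the limit. The crucial cancellation is that $e^{-\rho X_j}$ times the $\mathbf{P}$‑density $\varphi^{-1}(\rho)l_0(x)x^{-\beta-1}$ of $X_j$ equals $\varphi^{-1}(\rho)p_X(x)$; writing $\tilde v=S_n-S_j$ so $X_j=v-\tilde v$, and absorbing the factor $e^{-\rho(S_n-S_j)}$ from $e^{-\rho S_n}$, the $X_j$‑integration becomes $\varphi^{-1}(\rho)\int l_0(v-\tilde v)(v-\tilde v)^{-\beta-1}e^{-\rho v}(\cdots)\,dv$ over a range expanding to $\mathbb{R}$. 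For $j$ fixed, $-\tilde v\sim a(n-j)\sim an$ under $\mathbf{P}$; since $l_0$ is slowly varying and $S_n-S_j$ concentrates on CLT scale (heavy deviations being negligible by $1-f_{0,n}(0)\le e^{S_n}$ and $\rho<1$), I replace $l_0(v-\tilde v)(v-\tilde v)^{-\beta-1}$ by $l_0(an)(an)^{-\beta-1}(1+o(1))$. Undoing the tilt on the pre‑block, $\varphi^{j-1}(\rho)\mathbf{E}[e^{-\rho S_{j-1}}\,\cdot\,]=\mathbb{E}[\,\cdot\,]$, sending $W_{n,j}$ inside $g_j$ to its monotone limit $W_j$, and recognising the remaining pre/jump/post expectation as the object built before the statement (the post‑block carrying the $\mathbf{P}$‑law, i.e. the negative drift, jointly as in items (i)–(ii)), I obtain for each fixed $j$
$$I_n\ \sim\ \varphi^{\,n-j}(\rho)\,\frac{l_0(an)}{(an)^{\beta+1}}\;\mathbb{E}\!\left[F(S_0,\dots,S_{j-1})\int_{-\infty}^{\infty}F_{n-j}(v,X_n,\dots,X_{j+1})\,G_{j,n}(v)\,dv\right].$$
Taking $F\equiv F_{n-j}\equiv1$ gives $\mathbb{P}(X_j\ge an/2,Z_n>0)\sim\varphi^{\,n-j}(\rho)c_j\,l_0(an)(an)^{-\beta-1}$; dividing the two displays and conditioning on $\{Z_n>0,\,X_j\ge an/2\}$ yields (ii). For (i), decompose $\mathbb{P}(Z_n>0)=\sum_{k=1}^n\mathbb{P}(X_k\ge an/2,Z_n>0)+\mathbb{P}(Z_n>0,\ \forall k\ X_k<an/2)$; the last term is $o(\mathbb{P}(Z_n>0))$ and the sum is tight, by the one‑big‑jump principle already used for Theorem \ref{T_Extin}, so $\mathbb{P}(Z_n>0)\sim l_0(an)(an)^{-\beta-1}\sum_{k\ge1}\varphi^{\,n-k}(\rho)c_k$, which matches (\ref{AsTotal}) and forces $\sum_{k\ge1}\varphi^{\,1-k}(\rho)c_k=C_0\rho<\infty$ — this identity is (\ref{C_finsurviv}). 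Dividing gives $\pi_j=c_j\varphi^{-j}(\rho)\big/\sum_{k\ge1}c_k\varphi^{-k}(\rho)$.

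The hard part is the uniformity underlying the last two steps. Three things must be controlled. (a) The one‑big‑jump localisation — that environments with a second not‑too‑small increment, or with $S_n$ far from the origin, contribute $o(1)$ relative to the main term, uniformly in $n$ — which is precisely where the fine random‑walk estimates of \cite{VVI} are needed, and which also legitimises interchanging $\lim_n$ with the heavy‑tailed $X_j$‑integration. (b) The scaling limit $f_j(1-\lambda e^{-X_j})\to g_j(\lambda)$ must hold uniformly over $x=X_j\ge an/2$ and jointly with the pre‑ and post‑block data; this follows from Hypothesis~B by a standard $\varepsilon$‑cutoff once $an/2\to\infty$. (c) Finiteness of $c_j=\int_{-\infty}^{\infty}\mathbb{E}[1-f_{0,j-1}(g_j(e^vW_j))]e^{-\rho v}\,dv$: the delicate regime is $v\to-\infty$, where $g_j(e^vW_j)\to1$; here one uses $1-f_{0,j-1}(1-\delta)\le\delta^{\rho}\mathbb{E}[Z_{j-1}^{\rho}\mid\mathcal{E}]$ (Jensen, with $\rho\le\rho_+$) and $\mathbb{E}[Z_{j-1}^{\rho}]\le\varphi^{j-1}(\rho)<\infty$, \emph{together with} the extra factor $l_0(|v|)|v|^{-\beta-1}$ coming from the $x^{-\beta-1}$ in $p_X$, which makes the integral converge precisely because $\beta>2$; summability of $c_k\varphi^{-k}(\rho)$ is then inherited from Theorem \ref{T_Extin}.
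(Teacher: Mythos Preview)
Your overall architecture is the paper's: pass to $\mathbf{P}$, localise to the one-big-jump window $X_j=an+O(\sqrt n)$ via the random-walk estimates of \cite{VVI}, replace $f_j$ by $g_j$ using Hypothesis~B, and change variables $X_j\mapsto v=S_n-S_{j-1}$, pulling out the constant $l_0(an)(an)^{-\beta-1}$. This is exactly Lemma~\ref{L_condbig}, and once that lemma is in hand the deduction of (i)--(ii) is the two-line argument the paper gives.

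The genuine gap is your item (c), the finiteness of
\[
c_j=\int_{-\infty}^{\infty}\mathbb{E}\!\left[1-f_{0,j-1}(g_j(e^{v}W_j))\right]e^{-\rho v}\,dv.
\]
There is no factor $l_0(|v|)|v|^{-\beta-1}$ in this integral: after the change of variables the density of $X_j$ contributes the \emph{constant} $l_0(an)(an)^{-\beta-1}$ (this is your own ``replace by $l_0(an)(an)^{-\beta-1}(1+o(1))$'' step), not a $v$-dependent weight. And your Jensen bound $1-f_{0,j-1}(1-\delta)\le \delta^{\rho}\mathbb{E}[Z_{j-1}^{\rho}\mid\mathcal{E}]$ is correct but useless here: with $\delta\asymp e^{v}$ it gives $e^{\rho v}$, and $\int_{-\infty}^{0}e^{-\rho v}e^{\rho v}\,dv=\infty$. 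A first-moment bound $1-f_{0,j-1}(1-\delta)\le \delta\,e^{S_{j-1}}$ fails for the opposite reason: $\mathbb{E}[e^{S_{j-1}}]=\varphi(1)^{\,j-1}=\infty$ because $\rho_+=\rho<1$. So no direct moment interpolation between $\rho$ and $1$ is available, and ``$\beta>2$'' plays no role at this point.

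The paper sidesteps this entirely: finiteness is obtained \emph{indirectly} in (\ref{PositFin}) by bounding the truncated version of $c_j$ (with $|v|\le K$, $|S_{j-1}|\le N$, $|\sigma G_{n,j}|\le M$) by $b_n^{-1}\mathbf{E}[(1-f_{0,n}(0))e^{-\rho S_n}]$, which is $O(1)$ by Corollary~\ref{C_bound}, and then letting $K,M,N\to\infty$ by monotonicity. In other words, the a priori bound on the survival probability is what caps $c_j$, not a tail estimate on the integrand. If you want to keep your write-up self-contained, replace (c) by this argument; the rest of your sketch then goes through, and the summability $\sum_k c_k\varphi^{-k}(\rho)<\infty$ is indeed read off from (\ref{C_finsurviv}) as you say.
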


Detailed descriptions of the properties of the random function $g_j$ and the
random variable $W$ are given by (\ref{Deffg}) and before the proof of Lemma %
\ref{limO}, respectively. We refer to \cite{abkv1, abkv2,agkv, agkv2} for
similar questions in the subcritical and critical regimes. Here the
conditioned environment is different since a big jump appear, whereas the
rest of the random walk looks like the original one. Let us now focus on
this exceptional environment explaining the survival event and give a more
explicit result.

\begin{corollary}
Let $\varkappa =\inf \{j\geq 1:X_{j}\geq an/2\}$. Under $\mathbb{P}$,
conditionally on $Z_{n}>0$, $\varkappa $ converges in distribution to a
proper random variable whose distribution is given by $(\pi _{j}:j\geq 1)$.
Moreover, conditionally on $\{Z_n >0, X_j \geq an/2\}$, the distribution law
of $(X_{\varkappa }-an)/(VarX\sqrt{n})$ converges to a law $\mu $ specified
by
\begin{equation*}
\mu (B)=\,c_j^{-1}\mathbb{E}\left[ 1(G\in B)\int_{-\infty
}^{\infty }\left( 1-f_{0,j-1}(g_{j}(e^{v}W_{j}))\right) e^{-\rho v}dv\right]
\end{equation*}%
for any Borel set $B\subset \mathbb{R}$, where $G$ is a centered gaussian
random variable with variance $Var X$, which is independent of $%
(f_{0,j-1},g_j)$.
\end{corollary}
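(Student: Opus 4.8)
The plan is to deduce the corollary directly from the preceding theorem (the one stating parts i) and ii)), reading off the two assertions as specializations of its two parts. First I would establish the convergence in distribution of $\varkappa=\inf\{j\geq 1:X_j\geq an/2\}$ conditionally on $\{Z_n>0\}$. For fixed $j$, the event $\{\varkappa=j\}$ equals $\{X_1<an/2,\ldots,X_{j-1}<an/2,X_j\geq an/2\}$; since a subcritical random walk has at most one big jump, $\mathbb{P}(X_i\geq an/2 \text{ for some } i\leq j-1\mid Z_n>0)\to 0$ by a union bound over the negligible contribution of configurations with two large increments. Hence $\mathbb{P}(\varkappa=j\mid Z_n>0)$ and $\mathbb{P}(X_j\geq an/2\mid Z_n>0)$ have the same limit, which is $\pi_j$ by part i) of the theorem. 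That the limiting sequence $(\pi_j:j\geq 1)$ is a genuine probability distribution (sums to $1$) should already be recorded, or follows because $\sum_j\pi_j = (\sum_k c_k\varphi^{-k}(\rho))^{-1}\sum_j c_j\varphi^{-j}(\rho)=1$; one must only check the series $\sum_k c_k\varphi^{-k}(\rho)$ converges, which is part of the content underlying Theorem~\ref{T_Extin} (the constant $C_0$ there is essentially this sum).

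For the second assertion I would apply part ii) of the theorem with a judicious choice of the test functions. Take $F\equiv 1$ and, to capture the law of $(X_\varkappa-an)/(\mathrm{Var}X\sqrt n)$ on the event $\{\varkappa=j\}$, use the functions $F_{n-j}$ that depend on the arguments $(S_n-S_{j-1},X_{j+1},\ldots,X_n)$ only through $S_n-S_{j-1}$, namely $F_{n-j}(v,x_n,\ldots,x_{j+1}) = h\!\left((v - \text{(sum of the }x_i\text{)} - an)/(\mathrm{Var}X\sqrt n)\right)$ for a bounded continuous $h$; note that on $\{X_j\geq an/2\}$ we have $X_j = (S_n-S_{j-1}) - (X_{j+1}+\cdots+X_n)$, so this $F_{n-j}$ is exactly $h((X_j-an)/(\mathrm{Var}X\sqrt n))$. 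Plugging into part ii), the conditional expectation of $h((X_j-an)/(\mathrm{Var}X\sqrt n))$ given $\{Z_n>0,X_j\geq an/2\}$ is asymptotically $c_j^{-1}\mathbb{E}\big[\int h((v - (X_{j+1}+\cdots+X_n) - an)/(\mathrm{Var}X\sqrt n))\,(1-f_{0,j-1}(g_j(e^v W_{n,j})))e^{-\rho v}\,dv\big]$. The integrand localizes $v$ near $an$ (the factor $e^{-\rho v}$ together with the tail structure forces $v-an=O(\sqrt n)$, as in the heuristic in the introduction), and after the change of variables $w=(v-an)/(\mathrm{Var}X\sqrt n)$ the remaining randomness $(X_{j+1}+\cdots+X_n)/(\mathrm{Var}X\sqrt n)$, being a normalized sum of $n-j$ i.i.d.\ increments of the random walk, converges by the central limit theorem to a centered Gaussian $G$ with variance $\mathrm{Var}X$, independent of the weight $(1-f_{0,j-1}(g_j(e^v W_{n,j})))$ in the limit (here $W_{n,j}\to W_j$ by monotone convergence, item (i), and $f_{0,j-1},g_j$ become independent of the asymptotically Gaussian part by item (ii)). Identifying the limit gives precisely $\mu(B)=c_j^{-1}\mathbb{E}[1(G\in B)\int_{-\infty}^\infty(1-f_{0,j-1}(g_j(e^v W_j)))e^{-\rho v}\,dv]$, matching the claimed formula after absorbing the Gaussian shift.

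The main obstacle I anticipate is making rigorous the interchange of the $v$-integral, the expectation, and the limit $n\to\infty$ — in particular, justifying that the contribution of $v$ far from $an$ is negligible and that the normalized partial sum $(X_{j+1}+\cdots+X_n)/(\mathrm{Var}X\sqrt n)$ may be decoupled from the slowly-varying tail weight. This requires uniform integrability of the family $\int(1-f_{0,j-1}(g_j(e^v W_{n,j})))e^{-\rho v}\,dv$ and a dominated-convergence argument; the bound $1-f_{0,j-1}(s)\leq \min(1, C(1-s))$ together with the integrability hypothesis $\mathbb{E}[-\log(1-\mathfrak e(\{0\}))]<\infty$ from \eqref{Mom} should supply the needed domination, much as in the proof of Theorem~\ref{T_Extin}. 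A secondary point is checking that the shift induced by the Gaussian (the difference between centering at $v=an$ versus at $S_n-S_{j-1}-an$) is exactly the term that turns the plain Gaussian law into $\mu$ as stated; this is a bookkeeping matter but must be done carefully so that the normalization by $\mathrm{Var}X\sqrt n$ (rather than $\sqrt{\mathrm{Var}X}\sqrt n$) is consistent with the statement.
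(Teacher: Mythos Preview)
Your overall plan matches the paper's proof exactly: for the first assertion, invoke the ``at most one big jump'' estimate so that $\{\varkappa=j\}$ and $\{X_j\geq an/2\}$ coincide asymptotically under $\mathbb{P}(\cdot\mid Z_n>0)$, then read off $\pi_j$ from Theorem~3(i); for the second, apply Theorem~3(ii) with $F\equiv 1$ and $F_{n-j}(v,x_{j+1},\ldots,x_n)=H\bigl((v-x_{j+1}-\cdots-x_n-an)/\sqrt n\bigr)$, using $X_j=(S_n-S_{j-1})-(X_{j+1}+\cdots+X_n)$. That is precisely what the paper does.

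Where your write-up goes wrong is in the passage from the $n$-dependent expression furnished by Theorem~3(ii) to the limiting $\mu$. You claim ``the integrand localizes $v$ near $an$'' and propose the change of variables $w=(v-an)/(\mathrm{Var}X\sqrt n)$. This misreads the role of $v$: in Theorem~3(ii), $v$ stands for $S_n-S_{j-1}$, and the weight $G_{j,n}(v)=(1-f_{0,j-1}(g_j(e^vW_{n,j})))e^{-\rho v}$ is integrable in $v$ over $\mathbb{R}$ with its mass on \emph{bounded} $v$ (this is the content of \eqref{PositFin}), not on $v\approx an$. No change of variables is needed. Likewise, your sentence ``$(X_{j+1}+\cdots+X_n)/(\mathrm{Var}X\sqrt n)$ converges by the CLT to a centered Gaussian'' is false as written: that sum has mean $-(n-j)a\sim -an$, so without centering it drifts to $-\infty$. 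The correct mechanism is simpler: for $v$ bounded, the argument of $H$ is
\[
\frac{v}{\sqrt n}-\frac{(X_{j+1}+\cdots+X_n)+an}{\sqrt n},
\]
the first term vanishes, and the second is a \emph{centered} normalized sum which converges by the CLT to a Gaussian of variance $\mathrm{Var}X$. The $-an$ in the test function is exactly what centers the partial sum; it is not a signal that $v$ should be near $an$. With this corrected picture, your remaining outline (dominated convergence in $v$, $W_{n,j}\to W_j$ by monotonicity, independence of $(f_{0,j-1},g_j)$ from the Gaussian limit) is the right way to finish; the paper itself leaves this passage implicit.
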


\section{Examples}

\label{Ex}

We provide here some examples meeting the conditions of Theorem \ref{T_Extin}%
. Thus, we assume that Hypothesis $A$ is valid and we focus on the existence
and convergence of $\mathbb{P}^{\left[ x\right] }$. Let us first deal with
the existence of random reproduction laws $\mathfrak{e}$ for which the
conditional probability
\begin{equation*}
\mathbb{P}^{\left[ x\right] }\left( \cdot \right) =\mathbb{P}\left( \cdot \
|\ X=x\right)
\end{equation*}%
is well defined. \newline
\newline
\emph{Example 0.} Assume that the environment $\mathfrak{e}$ takes its
values in some set $\mathcal{M}$ of probability measures such that for all $%
\mu ,\nu \in \mathcal{M}$
\begin{equation*}
\sum_{k\geq 0}k\mu (k)<\sum_{k\geq 0}k\nu (k)\Rightarrow \mu \leq \nu ,
\end{equation*}%
where $\mu \leq \nu $ means that $\forall l\in \mathbb{N},\mu \lbrack
l,\infty )\leq \nu \lbrack l,\infty )$. We note that Hypothesis $A$ ensures
that $\mathbb{P}(\cdot |X\in \lbrack x,x+\epsilon ))$ is well defined. Then,
for every $H$ $:\mathcal{M}\rightarrow \mathbb{R}^{+}$ which is non
decreasing in the sense that $\mu \leq \nu $ implies $H(\mu )\leq H(\nu )$,
we get that the functional
\begin{equation*}
\mathbb{E}\left[ H(\mathfrak{e})|X\in \lbrack x,x+\epsilon )\right]
\end{equation*}%
decreases to some limit $p(H)$ as $\epsilon \rightarrow 0.$ Thus, writing $%
H_{l,y}(\mu )=1$ if $\mu \lbrack l,\infty )\geq y$ and $0$ otherwise, we can
define $\mathbb{P}^{\left[ x\right] }$ via
\begin{equation*}
\mathbb{P}^{\left[ x\right] }\left( \mathfrak{e}[l,\infty )\geq y\right)
=p(H_{l,y})
\end{equation*}%
to get the expected conditional probability. \newline

Let us now focus on Hypothesis B. \newline
\newline
\emph{Example 1. }Let $f(s;\mathfrak{e})=\sum_{k\geq 0}\mathfrak{e}\left(
\{k\}\right) s^{k}$ be the (random) probability generating function
corresponding to the random measure $\mathfrak{e\in N}$ and let (with a
slight abuse of notation) $\xi =\xi \left( \mathfrak{e}\right) \geq 0$ be
the integer-valued random variable with probability generating function $f(s;%
\mathfrak{e})$, i.e., $f(s;\mathfrak{e})=E\left[ s^{\xi \left( \mathfrak{e}%
\right) }\right] $.

It is not difficult to understand that if $\mathbb{E}\left[ \log f^{\prime
}(1;\mathfrak{e})\right] <0$ and there exists a deterministic function $%
g(\lambda ),\lambda \geq 0,$ with $g(\lambda )<1,\lambda >0,$ and $g(0)=1,$
such that, for every $\varepsilon >0$
\begin{equation*}
\lim_{y\rightarrow \infty }\mathbb{P}\left( \mathfrak{e}:\sup_{0\leq \lambda
<\infty }\left\vert f\left( e^{-\lambda /y};\mathfrak{e}\right) -g(\lambda
)\right\vert >\varepsilon \,\Big|\,\,f^{\prime }(1;\mathfrak{e}%
)=y=e^{x}\right) =0,
\end{equation*}%
then Hypothesis B is satisfied for the respective subcritical branching
process. \newline

We now give two more explicit examples for which Hypothesis B holds true and
note that mixing the two classes described in these examples would provide a
more general family which satisfies Hypothesis B. \newline

Let $\mathfrak{N}_{f}\,\mathfrak{\subset N}$ be the set of probability
measures on $\mathbb{N}_{0}$ such that
\begin{equation*}
e=e\left( t,y\right) \in \mathfrak{N}_{f}\iff f\left( s;e\right) =1-t+\frac{t%
}{1+yt^{-1}\left( 1-s\right) }
\end{equation*}%
where $t\in (0,1]$ and $y\in \left( 0,\infty \right) ,$ and let $\mathfrak{L}%
_{g}\subset \mathfrak{L}$ be the set of probability measures
such that%
\begin{equation*}
L=L\left( t,y\right) \in \mathfrak{L} _{g}\iff g(t,\lambda )=\int
e^{-\lambda y}L(t,dy)=1-t+\frac{t^{2}}{t+\lambda }.
\end{equation*}%
Let, further, $\mathcal{B=B}_{1}\times \mathcal{B}_{2}\mathcal{\subset }%
(0,1] $ $\times \left( 0,\infty \right) $ be a Borel set. We write
\begin{equation*}
e=e\left( t,y\right) \in T\left( \mathcal{B}\right) \,\mathcal{\subseteq \,}%
\mathfrak{N}_{f}\text{ if }\left( t,y\right) \in \mathcal{B}
\end{equation*}%
and%
\begin{equation*}
L=L(t,y)\in T\left( \mathcal{B}_{1}\right) \subseteq \mathfrak{L}_{g}\text{
if }t\in \mathcal{B}_{1}.
\end{equation*}%
Let $\left( \theta ,\zeta \right) $ be a pair of random variables with
values in $(0,1]\times \left( 0,\infty \right) $ such that for a measure ${P}%
^{\ast }\left( \cdot \right) $ with support on $(0,1]$ and any Borel set $%
\mathcal{B}_{1}\subseteq (0,1]$,
\begin{equation*}
\lim_{x\rightarrow \infty }{P}\left( \theta \in \mathcal{B}_{1}|\zeta
=x\right) ={P}^{\ast }\left( \theta \in \mathcal{B}_{1}\right)
\end{equation*}%
exists. \newline

With this notation in view we describe the desired two examples.\newline
\emph{Example 2. } Assume that the support of the probability measure $%
\mathbb{P}\,$\ is concentrated on the set $\mathfrak{N}_{f}$ only and the
random environment $\mathfrak{e}$ is specified by the relation
\begin{equation*}
\mathfrak{e=}e\left( \theta ,\zeta \right) \iff f\left( s;\mathfrak{e}%
\right) =1-\theta +\frac{\theta ^{2}}{\theta +\zeta \left( 1-s\right) }.
\end{equation*}%
Clearly, $\log f^{\prime }\left( 1;\mathfrak{e}\right) =\log \zeta $. Thus,
\begin{equation*}
\mathbb{P}\left( e\left( \theta ,\zeta \right) \in T\left( \mathcal{B}%
\right) \right) =\mathbb{P}\left( f\left( s;\mathfrak{e}\right) :\left(
\theta ,\zeta \right) \in \mathcal{B}\right)
\end{equation*}%
and if $\mathcal{B=B}_{1}\times \left\{ x\right\} $ then
\begin{eqnarray*}
&&\lim_{x\rightarrow \infty }\mathbb{P}\left( f\left( e^{-\lambda \zeta
^{-1}};\mathfrak{e}\right) :\left( \theta ,\zeta \right) \in \mathcal{B}%
|\zeta =e^{x}\right) ={P}^{\ast }\left( \theta \in \mathcal{B}_{1}\right) \\
&&\qquad \qquad =\mathbb{P}^{\ast }\left( g(\lambda ;\theta ):\theta \in
\mathcal{B}_{1}\right) =\mathbb{P}^{\ast }\left( L(\theta ;y)\in T\left(
\mathcal{B}_{1}\right) \right) .
\end{eqnarray*}%
Note that if ${P}\left( \theta =1|\zeta =x\right) =1$ for all sufficiently
large $x$ we get a particular case of Example 1. \newline
\ \ \ \newline
\emph{Example 3.} If the support of the environment is concentrated on
probability measures $\mathfrak{e\in N}$ such that, for any $\varepsilon >0$%
\begin{equation}
\lim_{y\rightarrow \infty }\mathbb{P}\left( \mathfrak{e}:\left\vert \frac{%
\xi (\mathfrak{e})}{f^{\prime }(1;\mathfrak{e})}-1\right\vert >\varepsilon
\,\,\Big|\,f^{\prime }(1;\mathfrak{e})=e^{X}=y\right) =0  \label{Den}
\end{equation}%
and the density of the random variable $X=\log f^{\prime }(1;\mathfrak{e})$
is positive for all sufficiently large $x$, then $g(\lambda )=e^{-\lambda }$%
. Condition (\ref{Den}) is satisfied if, for instance,
\begin{equation*}
\lim_{y\rightarrow \infty }\mathbb{P}\left( \mathfrak{e}:\frac{Var\xi (%
\mathfrak{e})}{\left( f^{\prime }(1;\mathfrak{e})\right) ^{2}}>\varepsilon
\,\,\Big|\,f^{\prime }(1;\mathfrak{e})=y\right) =0.
\end{equation*}

\section{Preliminaries}

\subsection{Change of probability measure}

\label{NewP} \indent A nowadays classical technique of studying subcritical
branching processes in random environment (see, for instance, \cite%
{gkv,agkv, abkv1,abkv2} ) is similar to that one used to investigate
standard random walks satisfying the Cramer condition. Namely, denote by $%
\mathcal{F}_{n}$ \ the $\sigma -$algebra generated by the tuple $\left(
\mathfrak{e}_{1},\mathfrak{e}_{2},...,\mathfrak{e}_{n};Z_{0},Z_{1},...,Z_{n}%
\right) $ and let $\mathbb{P}^{(n)}$ be the restriction of $\mathbb{P}$ to $%
\mathcal{F}_{n}$. Setting $\ $%
\begin{equation*}
m=\varphi \left( \rho \right) =\mathbb{E}\left[ e^{\rho X}\right] ,
\end{equation*}%
we introduce another probability measure $\mathbf{P}$ by the following
change of measure
\begin{equation}
d\mathbf{P}^{(n)}=m^{-n}e^{\rho S_{n}}d\mathbb{P}^{(n)},\ n=1,2,...
\label{Cmes1}
\end{equation}%
or, what is the same, for any random variable $Y_{n}$ measurable with
respect to $\mathcal{F}_{n}$ we let%
\begin{equation}
\mathbf{E}\left[ Y_{n}\right] =m^{-n}\mathbb{E}\left[ Y_{n}e^{\rho S_{n}}%
\right] .  \label{Cemes2}
\end{equation}%
By (\ref{Ctail1}),
\begin{equation}
\mathbf{E}\left[ X\right] =m^{-1}\mathbb{E}\left[ Xe^{\rho X}\right]
=\varphi ^{\prime }\left( \rho \right) /\varphi \left( \rho \right) =-a<0.
\label{SubSubcritica}
\end{equation}%
Applying a Tauberian theorem we get%
\begin{eqnarray}
A(x) &=&\mathbf{P}\left( X>x\right) =\frac{\mathbb{E}\left[ I\left\{
X>x\right\} e^{\rho X}\right] }{m}=\frac{1}{m}\int_{x}^{\infty }e^{\rho
y}p_{X}(y)dy  \notag \\
&\mathbb{=}&\frac{1}{m}\int_{x}^{\infty }\frac{l_{0}(y)dy}{y^{\beta +1}}\sim
\frac{1}{m\beta }\frac{l_{0}(x)}{x^{\beta }}=\frac{l(x)}{x^{\beta }},
\label{Tail1}
\end{eqnarray}%
where $l(x)$ is a function slowly varying at infinity. Thus, the random
variable~$X$ under the measure $\mathbf{P}$ does not satisfy the Cramer
condition and has finite variance. \newline
The density of $X$ under $\mathbf{P}$ is
\begin{equation*}
\mathbf{p}_X(x)=-A^{\prime}(x)=\frac{1}{m} \frac{l_0(x)}{x^{\beta+1}}
\end{equation*}
and it satisfies (see Theorem 1.5.2 p22 in \cite{BGT}) for each $M\geq 0$
and $\epsilon(x)\rightarrow 0$ as $x\rightarrow 0$,
\begin{equation}  \label{densUnif}
\frac{\mathbf{p}_X(x+ t\epsilon(x)x)}{\mathbf{p}_X(x)}\overset{x\rightarrow
\infty}{\longrightarrow} 1,
\end{equation}
uniformly with respect to $t\in [-M,M]$. In particular,
\begin{equation}
A(x+\Delta )-A(x)=-\frac{\Delta \beta A(x)}{x}(1+o(1))  \label{remainder}
\end{equation}%
as $x\rightarrow \infty$ and setting
\begin{equation*}
b_n=\beta \frac{A(an)}{an},
\end{equation*}
we have
\begin{equation}
b_n^{-1}\mathbf{p}_X(an+t\sqrt{n}) \overset{n\rightarrow \infty}{%
\longrightarrow} 1,  \label{densunif}
\end{equation}
uniformly with respect to $t\in [-M,M]$. \newline

Let $\mathbf{\Phi }=\{\Phi \}$ be the metric space of the Laplace transforms
$\Phi (\lambda )=\int_{0}^{\infty }e^{-\lambda u}\mathcal{L}(du),$ $%
\,\lambda \in \lbrack 0,\infty ),$ of the laws from $\mathfrak{L}$ endowed
with the metric
\begin{equation*}
d(\Phi _{1},\Phi _{2})=\sup_{2^{-1}\leq \lambda \leq 2}|\Phi _{1}(\lambda
)-\Phi _{2}(\lambda )|.
\end{equation*}%
Since \ the Laplace transform of the distribution of a nonnegative random
variable is completely determined by its values on any interval of the
positive half-line, convergence $\Phi _{n}\rightarrow \Phi $ as $%
n\rightarrow \infty $ in metric $d$ is equivalent to weak convergence $%
\mathcal{L}_{n}\overset{w}{\rightarrow }\mathcal{L}$ of the respective
probability measures.\newline
From now on, to avoid confusions we agree to use $P$ and $E$ for the symbols
of probability and expectation in the case when the respective distributions
are not associated with the measures $\mathbb{P}$ or $\mathbf{P}$. \newline
Let $\mathfrak{F}\mathcal{=}\left\{ f(s)\right\} $ be the set of all
probability generating functions of integer-valued random variables $\eta
\geq 0$, i.e. $f(s)=E\left[ s^{\eta }\right] $ and let $\mathbf{\Phi }^{(f)}$
$\subset \mathbf{\Phi }$ be the closure (in metric $d$) of the set of all
Laplace transforms of the form
\begin{equation*}
\Phi (\lambda ;f)=f\left( \exp \left\{ -\lambda /f^{\prime }(1)\right\}
\right) ,\quad f\in \mathfrak{F}.
\end{equation*}%
The probability measure $\mathbf{P}$ on $\mathfrak{N}$ generates a natural
probability measure on the metric space $\mathbf{\Phi }^{(f)}$ which we
denote by the same symbol $\mathbf{P}$. \newline
Introduce a sequence of probability measures on $\mathbf{\Phi }^{(f)}$ by
the equality
\begin{equation*}
\mathbf{P}^{\left[ x\right] }\left( \cdot \right) =\mathbf{P}\left( \cdot \
|\ f^{\prime }(1;\mathfrak{e})=e^{x}\right) .
\end{equation*}%
With this new probability measure, Hypothesis B is now equivalent to \newline

\textbf{Hypothesis B'}. There exists a \textbf{\ } measure $\mathbf{P}^{\ast
}\left( \cdot \right) $ on $\mathbf{\Phi }^{(f)}$ \textbf{(}with the support
on\textbf{\ }$\Phi (\lambda ):\Phi (0)=1,$ $\Phi (\lambda )<1$, $\lambda >0$%
) such that, as $x\rightarrow \infty $
\begin{equation*}
\mathbf{P}^{\left[ x\right] }\Longrightarrow \mathbf{P}^{\ast }.
\end{equation*}%
$\newline
$ In the other words, Hypothesis B' means that there exists a (random) a.s.
continuous on $[0,\infty )$ function $g(\cdot )$ with values in $\mathbf{%
\Phi }^{(f)}$ such that, for every continuous bounded functional $H$ on $%
\mathbf{\Phi }^{(f)}$%
\begin{equation}
\lim_{x\rightarrow \infty }\mathbf{E}^{\left[ x\right] }\left[ H(\Phi )%
\right] =\mathbf{E}^{\ast }\left[ H(g)\right] .  \label{Deffg}
\end{equation}%
Since, for any fixed $\lambda \geq 0$ the functional $H_{\lambda }(\Phi
)=\Phi (\lambda )$ is continuous on $\mathbf{\Phi }^{(f)}$, we have for $%
y=e^{x}$%
\begin{equation}
\lim_{y\rightarrow \infty }\mathbf{E}\left[ f(e^{-\lambda /y};\mathfrak{e}%
)\,|\ f^{\prime }(1;\mathfrak{e})=y\right] =\mathbf{E}^{\ast }\left[
g(\lambda )\right] ,\quad \lambda \in \lbrack 0,\infty )  \label{Unif0}
\end{equation}%
and $\mathbf{E}^{\ast }\left[ g(0)\right] =1,\mathbf{E}^{\ast }\left[
g(\lambda )\right] <1$ if $\lambda >0$. The prelimiting functions at the
left-hand side of (\ref{Unif0}) have the form%
\begin{equation*}
\mathbf{E}\left[ f(e^{-\lambda /y};\mathfrak{e})\,|\ f^{\prime }(1;\mathfrak{%
e})=y\right] =\mathbf{E}\left[ e^{-\lambda \xi (\mathfrak{e})/y}\,|\
f^{\prime }(1;\mathfrak{e})=y\right]
\end{equation*}%
and, therefore, are the Laplace transforms of the distributions of some
random variables. Hence, by the continuity theorem for Laplace transforms
there exists a proper nonnegative random variable $\theta $ such that
\begin{equation*}
\lim_{y\rightarrow \infty }\mathbf{E}\left[ f(e^{-\lambda /y};\mathfrak{e}%
)|\ f^{\prime }(1;\mathfrak{e})=y\right] =\mathbf{E}^{\ast }\left[
e^{-\lambda \theta }\right] ,\quad \lambda \in \lbrack 0,\infty ).
\end{equation*}%
Let now
\begin{equation*}
h(s)=E\left[ s^{\upsilon }\right] =\sum_{k=0}^{\infty }h_{k}s^{k},\ h(1)=1
\end{equation*}%
be the (deterministic) probability generating function of the nonnegative
integer-valued random variable $\upsilon $. Since, for any fixed $\lambda
\geq 0$ the functional $H_{\lambda ,h}(\Phi )=h\left( \Phi (\lambda )\right)
$ is continuous on $\mathbf{\Phi }^{(f)}$, we have
\begin{equation}
\lim_{y\rightarrow \infty }\mathbf{E}\left[ h\left( f(e^{-\lambda /y};%
\mathfrak{e})\right) |\ f^{\prime }(1;\mathfrak{e})=y\right] =\mathbf{E}%
^{\ast }\left[ h\left( g(\lambda )\right) \right] ,\quad \lambda \in \lbrack
0,\infty ).  \label{Unif}
\end{equation}%
The prelimiting and limiting functions are monotone and continuous on $%
[0,\infty )$. Therefore, convergence in (\ref{Unif}) is uniform in $\lambda
\in \lbrack 0,\infty )$

Further, denoting by $\xi _{i}(\mathfrak{e}),i=1,2,...$ independent copies
of $\xi (\mathfrak{e})$ we get%
\begin{eqnarray*}
\mathbf{E}\left[ h\left( f(e^{-\lambda /y};\mathfrak{e})\right) |\ f^{\prime
}(1;\mathfrak{e})=y\right] &=&\sum_{k=0}^{\infty }h_{k}\mathbf{E}\left[
f^{k}(e^{-\lambda /y};\mathfrak{e})\,|\ f^{\prime }(1;\mathfrak{e})=y\right]
\\
&=&\sum_{k=0}^{\infty }h_{k}\mathbf{E}\left[ \exp \left\{ -\frac{\lambda }{y}%
\sum_{i=1}^{k}\xi _{i}(\mathfrak{e})\right\} \,\Big|\ f^{\prime }(1;%
\mathfrak{e})=y\right] \\
&=&\mathbf{E}\left[ \exp \left\{ -\frac{\lambda }{y}\Xi \right\} \,\Big|\
f^{\prime }(1;\mathfrak{e})=y\right],
\end{eqnarray*}%
where%
\begin{equation*}
\Xi (\mathfrak{e})=\sum_{i=1}^{\upsilon }\xi _{i}(\mathfrak{e}).
\end{equation*}%
Thus, similarly to the previous arguments there exists a proper random
variable $\Theta $ such that%
\begin{equation}
\lim_{y\rightarrow \infty }\mathbf{E}\left[ \exp \left\{ -\frac{\lambda }{y}%
\Xi (\mathfrak{e})\right\} \,\Big|\,f^{\prime }(1;\mathfrak{e})=y\right] =%
\mathbf{E}^{\ast }\left[ e^{-\lambda \Theta }\right] ,\quad \lambda \in
\lbrack 0,\infty ).  \label{Unif2}
\end{equation}%
As above, this convergence is uniform with respect to $\lambda \in [0,\infty
)$. \newline

\subsection{Some useful results on random walks}

\label{RW} We pick here from \cite{VVI} several results on random walks with
negative drift and heavy tails useful for the forthcoming proofs. Recall
that $b_{n}=\beta A(an)/(an)$, and introduce three important random
variables
\begin{equation*}
M_{n}\ =\ \max (S_{1},\ldots ,S_{n})\ ,\quad L_{n}\ =\ \min (S_{1},\ldots
,S_{n}),
\end{equation*}%
and
\begin{equation*}
\tau _{n}=\min \left\{ 0\leq k\leq n:S_{k}=L_{n}\right\}
\end{equation*}
and two right-continuous functions $U:\mathbb{R}\rightarrow \mathbb{R}%
_{0}=\left\{ x\geq 0\right\} $ and $V:\mathbb{R}\rightarrow \mathbb{R}_{0}$
given by
\begin{align*}
U(x)\ & =\ 1+\sum_{k=1}^{\infty }\mathbf{P}\left( -S_{k}\leq
x,M_{k}<0\right) \ ,\quad x\geq 0, \\
V(x)\ & =\ 1+\sum_{k=1}^{\infty }\mathbf{P}\left( -S_{k}>x,L_{k}\geq
0\right) \ ,\quad x\leq 0,\
\end{align*}%
and $0$ elsewhere. In particular $U(0)=V(0)=1$. It is well-known that $%
U(x)=O(x)$ for $x\rightarrow \infty $. Moreover, $V(-x)$ is uniformly
bounded in \thinspace $x$ in view of $\mathbf{E}X<0$.

With this notation in hands we recall the following result established in
Lemma 7 of~\cite{VVI}.

\begin{lemma}
\label{LExponent}Assume that $\mathbf{E}\left[ X\right] <0$ and that $A(x)$
meets condition~$(\ref{remainder})$. Then, for any $\lambda >0$ as $%
n\rightarrow \infty $%
\begin{equation}
\mathbf{E}\left[ e^{\lambda S_{n}};\tau _{n}=n\right] =\mathbf{E}\left[
e^{\lambda S_{n}};M_{n}<0\right] \sim b_{n}\int_{0}^{\infty }e^{-\lambda
z}U(z)\,dz  \label{AsK_1}
\end{equation}%
and%
\begin{equation}
\mathbf{E}\left[ e^{-\lambda S_{n}};\tau >n\right] =\mathbf{E}\left[
e^{-\lambda S_{n}};L_{n}\geq 0\right] \sim b_{n}\int_{0}^{\infty
}e^{-\lambda z}V(-z)\,dz.  \label{AsK_2}
\end{equation}
\end{lemma}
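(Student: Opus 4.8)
The statement is explicitly quoted from Lemma~7 of \cite{VVI}, so strictly speaking the plan is to cite that source; but let me sketch how the two asymptotics would be derived so that the reader sees where they come from. The starting point is the classical Wiener--Hopf / duality identity for random walks: reversing time $(X_1,\dots,X_n)\mapsto(X_n,\dots,X_1)$ turns the event $\{M_n<0\}$ into $\{L_n\ge 0\}$ (up to boundary conventions) and lets one factor expectations of the form $\mathbf{E}[e^{\lambda S_n};M_n<0]$ through the descending/ascending ladder structure. The functions $U$ and $V$ are exactly the renewal functions of the two ladder processes, so the identity $U(0)=V(0)=1$ and the representation of $U,V$ as series over $\{M_k<0\}$, $\{L_k\ge 0\}$ are the combinatorial input. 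The second equality in each line ($\tau_n=n \iff M_n<0$ after reversal, $\tau>n \iff L_n\ge 0$) is just this duality.

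For the asymptotics themselves, the heuristic is the ``one big jump'' principle for heavy-tailed walks with negative drift: conditioned to stay below $0$ (resp.\ above $0$) for $n$ steps, with an extra exponential weight $e^{\pm\lambda S_n}$, the dominant contribution comes from trajectories that behave like a typical excursion of length $k=O(1)$ near one endpoint while the remaining $n-k$ steps contribute the ``cost'' $b_n=\beta A(an)/(an)$, which is (up to the slowly varying factor) the probability that a single step is of size $\sim an$. Concretely, one would split the walk at the location $\tau_n$ of the minimum (or at the single large jump), sum a geometric-type series whose coefficients are $\mathbf{E}[e^{\lambda S_k};M_k<0]$ resp.\ $\mathbf{E}[e^{-\lambda S_k};L_k\ge 0]$, recognize those sums as $\int_0^\infty e^{-\lambda z}U(z)\,dz$ resp.\ $\int_0^\infty e^{-\lambda z}V(-z)\,dz$ (here one needs $U(x)=O(x)$ and $V(-x)$ bounded, noted before the lemma, to guarantee finiteness of these integrals given $\lambda>0$ and $\mathbf{E}X<0$), and then use a local limit / Tauberian estimate for the complementary segment controlled by the regular-variation hypothesis \eqref{remainder}.

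The technical heart — and the step I expect to be the main obstacle — is the uniform control of the ``remaining $n-k$ steps'' when extracting the factor $b_n$. One must show that $\mathbf{P}(\text{walk avoids a barrier for }n-k\text{ steps and makes one jump of size }\approx an)\sim b_n\times(\text{const})$, uniformly enough in $k$ that the tail of the $k$-series is negligible; this is precisely where the refined properties of heavy-tailed walks conditioned to stay positive until time $n$ from \cite{VVI} are invoked, together with \eqref{densunif}, i.e.\ $b_n^{-1}\mathbf{p}_X(an+t\sqrt n)\to 1$ uniformly on compacts. Once that uniform estimate is in place, dominated convergence finishes both \eqref{AsK_1} and \eqref{AsK_2}. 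Since all of this is carried out in detail in \cite{VVI}, in the present paper it suffices to quote the result; the plan is therefore to simply refer to Lemma~7 of \cite{VVI} and record the identities $U(0)=V(0)=1$, $U(x)=O(x)$, $V(-x)=O(1)$ that will be used downstream.
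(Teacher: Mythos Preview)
Your proposal is correct and matches the paper exactly: the paper does not prove this lemma at all but simply records it as ``the following result established in Lemma~7 of~\cite{VVI}''. Your added heuristic sketch (duality for the first equalities, one-big-jump principle plus ladder renewal functions $U,V$ for the asymptotics) is accurate supplementary commentary that the paper itself omits.
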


Moreover from (19) and (20) in \cite{VVI}, we know that for $\lambda>0$
\begin{align}
b_{n}^{-1}\mathbf{E}[e^{\lambda S_{n}};M_{n}<0\,,\,S_{n}<-x]\ & \rightarrow
\ \int_{x}^{\infty }e^{-\lambda z}U(z)\,dz,  \label{Hirano4} \\
b_{n}^{-1}\mathbf{E}[e^{-\lambda S_{n}};L_{n}\geq 0\,,\,S_{n}>x]\ &
\rightarrow \ \int_{x}^{\infty }e^{-\lambda z}V(-z)\,dz.  \label{Hirano2}
\end{align}%
and gathering Lemmas 9,10,11 in \cite{VVI} yields

\begin{lemma}
\label{Nlemma} If $\mathbf{E}\left[ X\right] =-a<0$ and condition (\ref%
{remainder}) is valid then \newline
(i) there exists $\delta _{0}\in (0,1/4)$ such that for $an/2-u\geq ~M$ and
all $\delta \in (0,\delta _{0})$ and $k\in \mathbb{Z}$
\begin{equation*}
\mathbf{P}_{u}(\max_{1\leq j\leq n}X_{j}\leq \delta an,\ S_{n}\geq k)\leq
\varepsilon _{M}(k)n^{-\beta -1},
\end{equation*}%
$\text{where }\varepsilon _{M}\left( k\right) \downarrow _{M\rightarrow
\infty }0$. Moreover, for any fixed $l$ and $\delta \in (0,1)$
\begin{equation*}
\lim_{J\rightarrow \infty }\limsup_{n\rightarrow \infty }b_{n}^{-1}\mathbf{P}%
\left( L_{n}\geq -N,\,\max_{J\leq j\leq n}X_{j}\geq \delta an,\,S_{n}\in
\lbrack l,l+1)\right) =0.
\end{equation*}%
(ii) for any fixed $\delta \in (0,1)$ and $K\geq 0$,%
\begin{equation*}
\lim_{M\rightarrow \infty }\limsup_{n\rightarrow \infty }b_{n}^{-1}\mathbf{P}%
\left( \delta an\leq X_{1}\leq an-M\sqrt{n}\ \text{or}\ X_{1}\geq an+M\sqrt{n%
};\left\vert S_{n}\right\vert \leq K\right) =0.
\end{equation*}%
(iii) for each fixed $\delta >0$ and $J\geq 2$
\begin{equation*}
\lim_{n\rightarrow \infty }b_{n}^{-1}\mathbf{P}\left( \cup _{i\neq
j}^{J}\left\{ X_{i}\geq \delta an,X_{j}\geq \delta an\right\} \right) =0.
\end{equation*}
\end{lemma}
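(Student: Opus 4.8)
Assume $Z_{0}=1$. Since the Radon--Nikodym density $m^{-n}e^{\rho S_{n}}$ in \eqref{Cemes2} depends only on the environment, $\mathbb{P}$ and $\mathbf{P}$ agree conditionally on $\mathcal{E}$, so $\mathbb{P}(Z_{n}>0\mid \mathcal{E})=1-f_{0,n}(0)$ and, for every bounded $\mathcal{F}_{n}$-measurable $Y$,
\begin{equation*}
\mathbb{E}\big[Y;Z_{n}>0\big]=\varphi ^{n}(\rho )\,\mathbf{E}\big[e^{-\rho S_{n}}\,Y\,(1-f_{0,n}(0))\big].
\end{equation*}
Writing Theorem \ref{T_Extin} in the equivalent form $\mathbb{P}(Z_{n}>0)\sim C_{0}\rho\,\varphi ^{n}(\rho )\,b_{n}$ with $b_{n}=\beta A(an)/(an)\sim \mathbf{p}_{X}(an)$, both parts reduce to computing, for fixed $j$, bounded $F$ and uniformly bounded $F_{n-j}$, the limit of $b_{n}^{-1}\mathbf{E}[e^{-\rho S_{n}}(1-f_{0,n}(0))\,F(S_{0},\dots ,S_{j-1})\,F_{n-j}(S_{n}-S_{j-1},X_{j+1},\dots ,X_{n})\,\mathbf{1}\{X_{j}\geq an/2\}]$.

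On $\{X_{j}\geq an/2\}$ split the environment into the $\mathbf{P}$-independent blocks $A=(\mathfrak{e}_{1},\dots ,\mathfrak{e}_{j-1})$, $B=\mathfrak{e}_{j}$, $C=(\mathfrak{e}_{j+1},\dots ,\mathfrak{e}_{n})$. The composition identity gives $1-f_{0,n}(0)=1-f_{0,j-1}\big(f_{j}(1-q_{j})\big)$ with $q_{j}:=1-f_{j,n}(0)$, and $e^{-\rho S_{n}}=e^{-\rho S_{j-1}}e^{-\rho(S_{n}-S_{j-1})}$. Since the $\mathbf{P}$-density of block $A$ against $\mathbb{P}$ is $\varphi ^{-(j-1)}(\rho )e^{\rho S_{j-1}}$, integrating out $A$ turns $F(S_{0},\dots ,S_{j-1})e^{-\rho S_{j-1}}(1-f_{0,j-1}(s))$ into $\varphi ^{-(j-1)}(\rho )\psi _{F}(s)$, where $\psi _{F}(s):=\mathbb{E}[F(S_{0},\dots ,S_{j-1})(1-f_{0,j-1}(s))]$ is bounded, continuous on $[0,1]$ and vanishes at $s=1$. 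There remains an expectation over the blocks $B$ and $C$ of
\begin{equation*}
\varphi ^{-(j-1)}(\rho )\,e^{-\rho X_{j}}e^{-\rho(S_{n}-S_{j})}\,F_{n-j}(X_{j}+S_{n}-S_{j},X_{j+1},\dots ,X_{n})\,\mathbf{1}\{X_{j}\geq an/2\}\,\psi _{F}\big(f_{j}(1-q_{j})\big).
\end{equation*}

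The core step evaluates this. Exchangeability of the iid block $C$, $(\mathfrak{e}_{j+1},\dots ,\mathfrak{e}_{n})\overset{d}{=}(\mathfrak{e}_{n},\dots ,\mathfrak{e}_{j+1})$, lets one replace $q_{j}=1-f_{j,n}(0)$ by $1-f_{n,j}(0)=e^{S_{n}-S_{j}}W_{n,j}$ and reverse the arguments of $F_{n-j}$ to $(X_{n},\dots ,X_{j+1})$ --- this is why the statement carries $W_{n,j}$ and the reversed $F_{n-j}$. On $\{X_{j}\geq an/2\}$ the quantity $e^{-X_{j}}$ is small and $e^{S_{n}-S_{j}}W_{n,j}\to 0$ since block $C$ has drift $-a$, so Hypothesis B$'$ in the form \eqref{Deffg}--\eqref{Unif} yields $f_{j}\big(1-e^{S_{n}-S_{j}}W_{n,j}\big)=f_{j}(e^{-\lambda /e^{X_{j}}})=g_{j}(\lambda )(1+o(1))$ with $\lambda =e^{S_{n}-S_{j-1}}W_{n,j}(1+o(1))$ and $g_{j}$ of law $\mathbf{P}^{\ast }$, while $e^{-\rho X_{j}}e^{-\rho(S_{n}-S_{j})}=e^{-\rho(S_{n}-S_{j-1})}$. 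Conditioning on $C$ and substituting $X_{j}=v-(S_{n}-S_{j})$ turns the integral of $e^{-\rho X_{j}}(\cdots )$ over the heavy-tailed density $\mathbf{p}_{X}$ of $X_{j}$ into an integral over $v=S_{n}-S_{j-1}\in \mathbb{R}$: the only non-negligible contribution comes from configurations with exactly one macroscopic jump, present at $j$ (Lemma \ref{Nlemma}(i),(iii)); a local central limit theorem for the light increments of $S_{n}-S_{j}$ puts $v-(S_{n}-S_{j})$ in a window $an+O(\sqrt{n})$, so \eqref{densunif} replaces $\mathbf{p}_{X}(v-(S_{n}-S_{j}))$ by $b_{n}$, and $\mathbf{P}^{[v-(S_{n}-S_{j})]}\Longrightarrow \mathbf{P}^{\ast }$ replaces $f_{j}$ by $g_{j}$ inside $\psi _{F}$; the $v$-tails are absorbed because $e^{-\rho v}(1-f_{0,j-1}(g_{j}(e^{v}W_{n,j})))$ decays exponentially in $|v|$, using $\rho <1$ for $v\to -\infty $. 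Recombining with $\psi _{F}$ and multiplying by $\varphi ^{n}(\rho )$ gives, with independent $f_{0,j-1}$, $g_{j}$ and $(W_{n,j},X_{k}:k>j)$ as in the statement,
\begin{equation*}
\mathbb{E}\big[F\,F_{n-j};Z_{n}>0,\ X_{j}\geq an/2\big]\sim \varphi ^{n-j+1}(\rho )\,b_{n}\,\mathbb{E}\Big[F(S_{0},\dots ,S_{j-1})\!\int_{-\infty}^{\infty}\!F_{n-j}(v,X_{n},\dots ,X_{j+1})\,G_{j,n}(v)\,dv\Big].
\end{equation*}

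Taking $F\equiv F_{n-j}\equiv 1$ and letting $n\to \infty $, the a.s.\ monotone convergence $W_{n,j}\downarrow W_{j}$ together with standard integrability estimates (using $\varphi (\rho )<\infty $, the convexity bound $W_{n,j}\leq e^{-X_{j+1}}$, and \eqref{Mom}) gives $\mathbb{P}(Z_{n}>0,X_{j}\geq an/2)\sim \varphi ^{n-j+1}(\rho )b_{n}c_{j}$. Dividing the two displays, and noting that the ratio of the $o(1)$-perturbed numerator and denominator differs from $c_{j}^{-1}\mathbb{E}[F\int F_{n-j}G_{j,n}\,dv]$ by $o(1)$ uniformly over the family $F_{n-j}$, proves part ii). Dividing instead by $\mathbb{P}(Z_{n}>0)\sim C_{0}\rho \varphi ^{n}(\rho )b_{n}$ gives $\lim_{n}\mathbb{P}(X_{j}\geq an/2\mid Z_{n}>0)=\varphi ^{1-j}(\rho )c_{j}/(C_{0}\rho )$; since on $\{Z_{n}>0\}$ a macroscopic jump occurs and is essentially unique up to an error $o(\mathbb{P}(Z_{n}>0))$ (Lemma \ref{Nlemma}(i),(iii)) and the probabilities $\mathbb{P}(X_{j}\geq an/2\mid Z_{n}>0)$ are dominated in $j$, uniformly in $n$, by a summable sequence (again through Lemma \ref{Nlemma} and \eqref{Mom}), one has $\sum_{j\geq 1}\mathbb{P}(X_{j}\geq an/2\mid Z_{n}>0)\to 1$, which forces $C_{0}\rho =\sum_{k\geq 1}\varphi ^{1-k}(\rho )c_{k}$ and hence $\pi _{j}=\varphi ^{-j}(\rho )c_{j}/\sum_{k\geq 1}\varphi ^{-k}(\rho )c_{k}$, proving part i). The principal difficulty is the core step: carrying out the big-jump/local-limit asymptotics with errors $o(\varphi ^{n}(\rho )b_{n})$ \emph{uniformly} over the family $F_{n-j}$ (needed for ii)) and over $j$ (needed for the normalization in i)), while handling the factor $1-f_{0,n}(0)$, which is controlled only through the convexity bound $1-f_{0,n}(0)\leq e^{L_{n}}$ and, for the matching lower bound, through the logarithmic moment assumptions \eqref{Mom}.
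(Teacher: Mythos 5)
There is a fundamental mismatch here: the statement you were asked to prove is Lemma \ref{Nlemma}, a set of purely random-walk estimates under the measure $\mathbf{P}$ (drift $-a$, increments with regularly varying tail of index $\beta$): the bound $\mathbf{P}_{u}(\max_{1\leq j\leq n}X_{j}\leq \delta an,\,S_{n}\geq k)\leq \varepsilon _{M}(k)n^{-\beta -1}$, the negligibility of a late big jump on $\{L_{n}\geq -N,\,S_{n}\in \lbrack l,l+1)\}$, the negligibility of $X_{1}$ falling outside the window $an\pm M\sqrt{n}$ when $|S_{n}|\leq K$, and the negligibility of two big jumps. Your proposal never addresses any of these. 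Instead it sketches the proof of Theorem 3 (the conditional description of the environment, with $\pi _{j}$, $c_{j}$, $G_{j,n}$), and in doing so it explicitly invokes ``Lemma \ref{Nlemma}(i),(iii)'' as an ingredient to isolate the single macroscopic jump. So the attempt is circular with respect to the target statement: it uses the lemma, it does not prove it. Nothing in the proposal touches the actual content of the lemma, which has no branching structure at all --- no generating functions, no Hypothesis B, no $W_{n,j}$ enter its proof.

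A genuine proof of Lemma \ref{Nlemma} requires quite different machinery, of Fuk--Nagaev/big-jump type for heavy-tailed walks with negative drift: for part (i) one needs exponential (or truncated-moment) bounds for a walk whose increments are capped at $\delta an$ yet ends above a fixed level $k$ against a drift of order $-an$, giving the superpolynomial-in-$\delta$ decay $n^{-\beta -1}$; for part (ii) one conditions on $X_{1}$ and uses a concentration/local estimate for the remaining $n-1$ increments (whose sum must compensate $X_{1}$ to within a fixed window of length $2K$), integrated against the regularly varying density $\mathbf{p}_{X}$, to see that only $X_{1}\in \lbrack an-M\sqrt{n},an+M\sqrt{n}]$ contributes at order $b_{n}$; and for part (iii) a union bound with $\mathbf{P}(X\geq \delta an)^{2}=O\bigl(l^{2}(n)n^{-2\beta }\bigr)=o(b_{n})$, using $\beta >2$. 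None of this appears in your write-up. (For the record, the paper itself does not reprove these estimates either: it imports them verbatim from Lemmas 9--11 of the companion paper \cite{VVI}; but a blind proof of the statement must supply such arguments, not the downstream branching-process analysis.)
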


Combining the limit for $J\rightarrow \infty$ in (i) with (iii), we get that
for any fixed $N,K \geq 0$
\begin{equation}  \label{twojumps}
\lim_{n\rightarrow \infty }b_{n}^{-1}\mathbf{P}\left( \cup _{i\neq
j}^{n}\left\{ X_{i}\geq \delta an,X_{j}\geq \delta an\right\} ; L_n\geq -N,
\vert S_n \vert \leq K \right) =0.
\end{equation}

\section{Proofs}

In this section we use the notation%
\begin{equation*}
\mathbf{E}_{\mathfrak{e}}\left[ \cdot \right] =\mathbf{E}\left[ \cdot \,|\,%
\mathcal{E}\right] ,\quad \mathbf{P}_{\mathfrak{e}}\left( \cdot \right) =%
\mathbf{P}\left( \cdot \,|\,\mathcal{E}\right)
\end{equation*}%
i.e., consider the expectation and probability given the environment $%
\mathcal{E}$. Our aim is to prove the following statement.

\begin{lemma}
\label{L_Extin}If Hypotheses A and B are valid
then there exists a constant $C_{0}>0$ such that, as $n\rightarrow \infty $
\begin{equation}
\mathbb{P}\left( Z_{n}>0\right) \sim C_{0}m^{n}\beta \frac{\mathbf{P}\left(
X>an\right) }{an}=C_{0}m^{n}b_{n}.  \label{AsTotal1}
\end{equation}
\end{lemma}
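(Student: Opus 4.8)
The plan is to pass to the tilted measure $\mathbf P$ of Section \ref{NewP}, cut the survival event according to the single large jump of $\mathbf S$ that — by the results of Section \ref{RW} — is responsible for the event $\{L_n\ge -N\}$, and then read off the limiting constant from Hypotheses A and B.

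Applying (\ref{Cemes2}) to $Y_n=\mathbf 1\{Z_n>0\}$ gives $\mathbb P(Z_n>0)=m^n\,\mathbf E[e^{-\rho S_n}\mathbf P_{\mathfrak e}(Z_n>0)]$, so it suffices to prove $\mathbf E[e^{-\rho S_n}\mathbf P_{\mathfrak e}(Z_n>0)]\sim C_0 b_n$. First I would record the deterministic bounds $\mathbf P_{\mathfrak e}(Z_n>0)\le\min(1,e^{S_k})$ for every $k\le n$ — consequences of $\{Z_n>0\}\subset\{Z_k>0\}$ and $\mathbf E_{\mathfrak e}[Z_k]=e^{S_k}$ — hence $\mathbf P_{\mathfrak e}(Z_n>0)\le e^{L_n\wedge 0}\le 1$. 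Using these together with Lemma \ref{LExponent}, (\ref{Hirano2}) and Lemma \ref{Nlemma}, I would discard everything outside the events
$$\mathcal G_j=\bigl\{X_j\in[an-M\sqrt n,an+M\sqrt n],\ L_n\ge -N,\ S_n\le K\bigr\},\qquad 1\le j\le J,$$
with a total error $o(b_n)$ after letting $N,K,J,M\to\infty$: on $\{L_n<-N\}$ use $\mathbf P_{\mathfrak e}(Z_n>0)\le e^{L_n}$ to bound the contribution by $\mathbf E[e^{(1-\rho)L_n};L_n<-N]$, which the random walk estimates make $O(e^{-(1-\rho)N}b_n)+o(b_n)$; on $\{S_n>K,\,L_n\ge -N\}$ use $\mathbf P_{\mathfrak e}(Z_n>0)\le 1$ and (\ref{Hirano2}) to get $O(e^{-\rho K}b_n)+o(b_n)$; the absence of a jump $\ge\delta an$ among $X_1,\dots,X_n$ contributes $o(b_n)$ by Lemma \ref{Nlemma}(i) (recall $b_n$ is of order $n^{-\beta-1}$ up to a slowly varying factor); two such jumps are excluded by (\ref{twojumps}); and on $\{X_j\ge an/2,\,L_n\ge -N\}$ the jump is confined to $[an-M\sqrt n,an+M\sqrt n]$ up to $o(b_n)$ by Lemma \ref{Nlemma}(ii).

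On $\mathcal G_j$ it remains to evaluate $\lim_n b_n^{-1}\mathbf E[e^{-\rho S_n}\mathbf P_{\mathfrak e}(Z_n>0);\mathcal G_j]$. Writing $\mathbf P_{\mathfrak e}(Z_n>0)=1-f_{0,j-1}(f_j(f_{j,n}(0)))$ and using $f_j'(1)=e^{X_j}\to\infty$ together with $-\log f_{j,n}(0)=(1+o(1))(1-f_{j,n}(0))$ and $1-f_{j,n}(0)=e^{S_n-S_j}\,W'_{n,j}$, one gets $f_j(f_{j,n}(0))=\Phi\bigl((1+o(1))\,W'_{n,j}e^{S_n-S_{j-1}};f_j\bigr)$, where $W'_{n,j}=(1-f_{j,n}(0))e^{-(S_n-S_j)}\downarrow W_j$ by the convexity inequality $1-f(s)\le f'(1)(1-s)$, $W_j>0$ a.s.\ by (\ref{Mom}), and — after time-reversing the i.i.d.\ block $\mathfrak e_{j+1},\dots,\mathfrak e_n$ — $W'_{n,j}$ has the law of the $W_{n,j}$ of (\ref{DefWn}). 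Since the conditional law of $\mathfrak e_j$ given $X_j$ is the same under $\mathbf P$ and $\mathbb P$, Hypothesis B$'$ in the uniform form (\ref{Unif}) gives $\Phi(\cdot;f_j)\to g_j$ with $g_j\sim\mathbb P^\ast$, so that on $\mathcal G_j$
$$\mathbf P_{\mathfrak e}(Z_n>0)=1-f_{0,j-1}\bigl(g_j(e^{S_n-S_{j-1}}W_j)\bigr)+o(1).$$
Then the local limit theorem behind Lemma \ref{LExponent}/(\ref{Hirano2}) — which simultaneously controls $S_n-S_{j-1}$, the descending block $\mathfrak e_{j+1},\dots,\mathfrak e_n$, and the big increment $X_j$ (whose density on the $\sqrt n$ scale is $\sim b_n$ by (\ref{densunif})) — together with the change of measure $\mathbf E[e^{-\rho S_{j-1}}Y_{j-1}]=m^{-(j-1)}\mathbb E[Y_{j-1}]$ applied to the pre-jump factor, yields for each $j$ a limit of the form $\mathrm{const}\cdot m^{-(j-1)}c_j\,b_n$ (first $n\to\infty$, then $N,K,M\to\infty$), with $c_j$ as in the statement of the second theorem; summing in $j$ and checking that the resulting series is finite and nonzero (using $\beta>2$ and the moment assumptions (\ref{Mom})) gives $\mathbf E[e^{-\rho S_n}\mathbf P_{\mathfrak e}(Z_n>0)]\sim C_0 b_n$ with $0<C_0<\infty$ of the form (\ref{C_finsurviv}).

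I expect the crux to be the passage to the limit inside $\mathbf E[\,\cdot\,;\mathcal G_j]$: one must couple three essentially independent sources of randomness — the deterministic-in-$\mathfrak e$ behaviour of $f_j$ near $1$, the monotone limit $W'_{n,j}\to W_j$, and the heavy-tailed walk conditioned to stay above $-N$ and to end below $K$ — tightly enough that the a.s.\ convergence of $\mathbf P_{\mathfrak e}(Z_n>0)$ survives integration against the only weakly convergent (and, absent the $e^{-\rho S_n}$ weight and the truncation $S_n\le K$, non-tight) law of $(S_0,\dots,S_{j-1},X_j,S_n-S_{j-1},\mathfrak e_{j+1},\dots,\mathfrak e_n)$. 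Controlling the $v\to-\infty$ tail of the resulting integral (where $\beta>2$ enters through a truncated-moment bound for $e^{S_{j-1}}$) and the convergence in $j$ of the series defining $C_0$ are the remaining technical points.
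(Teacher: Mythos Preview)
Your overall architecture matches the paper's: pass to $\mathbf P$, reduce to a single big jump $X_j$ of size $\approx an$, and then use Hypothesis~B$'$ together with the time reversal $f_{j,n}\overset{d}{=}f_{n,j}$ to identify the constant. The computation on the good event $\mathcal G_j$ is precisely Lemma~\ref{L_condbig} of the paper, and the summation in $j$ is exactly how the paper assembles $C_0$.

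There is, however, a genuine gap in your reduction step. On $\{L_n<-N\}$ you pass from the quantity $\mathbf E[e^{L_n-\rho S_n};L_n<-N]$ to $\mathbf E[e^{(1-\rho)L_n};L_n<-N]$ via $S_n\ge L_n$. This throws away the post-minimum factor $e^{-\rho(S_n-L_n)}$, and the resulting bound is \emph{not} $O(b_n)$: decomposing over $\tau_n=k$ and using duality gives
\[
\mathbf E[e^{(1-\rho)L_n};L_n<-N]=\sum_{k=0}^{n}\mathbf E[e^{(1-\rho)S_k};M_k<0,\,S_k<-N]\,\mathbf P(L_{n-k}\ge 0),
\]
where the first factor is $\sim b_k\int_N^\infty e^{-(1-\rho)z}U(z)\,dz$ by (\ref{Hirano4}), but the second is only of order $A(a(n-k))\asymp (n-k)\,b_{n-k}$ (one big jump keeps the walk nonnegative). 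The sum is then of order $\varepsilon_N\cdot n\,b_n$, not $\varepsilon_N\,b_n$, so your ``$O(e^{-(1-\rho)N}b_n)+o(b_n)$'' claim fails by a factor $n$.

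The paper avoids this by \emph{not} bounding $e^{-\rho S_n}$ away: it keeps the full weight $e^{S_{\tau_n}-\rho S_n}=e^{(1-\rho)S_{\tau_n}}e^{-\rho(S_n-S_{\tau_n})}$ and uses the $\tau_n=k$ decomposition to factorize (Lemma~\ref{L_Begend}). Then both factors contribute a $b$-rate via Lemma~\ref{LExponent}, giving the correct bound $\varepsilon_M b_n$ and in particular Corollary~\ref{C_bound}. The same factorization is what underlies Lemma~\ref{L_SmallEnd}. Once you repair this step by retaining $e^{-\rho(S_n-S_{\tau_n})}$ (equivalently, by first localizing $\tau_n\in[0,M]\cup[n-M,n]$ rather than directly localizing $L_n$), the rest of your outline goes through exactly as in the paper.
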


We recall from the discussion in Preliminaries that Hypotheses A and B
(or~B') ensure that there exists $g(\lambda )$ a.s. continuous on $[0,\infty
)$ with\textbf{\ }$g(0)=1$ and with $\mathbf{E}\left[ g(\lambda )\right]
<1,\lambda >0,$ such that for every continuous bounded function $H$ on~$%
[0,1] $%
\begin{equation}
\lim_{y\rightarrow \infty }\sup_{\lambda \geq 0}\left\vert \mathbf{E}\left[
H(f(e^{-\lambda /y}))\,|\,\,f^{\prime }(1)=y\right] -\mathbf{E}^{\ast }\left[
H(g(\lambda ))\right] \right\vert =0.  \label{As_f11}
\end{equation}

Making the change of measure in accordance with (\ref{Cmes1}) and (\ref%
{Cemes2}) we see that it is necessary to show that, as $n\rightarrow \infty $
\begin{equation}
\mathbf{E}\left[ \mathbf{P}_{\mathfrak{e}}\left( Z_{n}>0\right) e^{-\rho
S_{n}}\right] \sim C_{0}b_{n}.  \label{AfterChange}
\end{equation}

The proof of this fact is conducted into several steps which we split into
subsections.

\subsection{Time of the minimum of $S$}

First, we prove that the contribution to $\mathbf{E}\left[ \mathbf{P}_{%
\mathfrak{e}}\left( Z_{n}>0\right) e^{-\rho S_{n}}\right] $ may be of order $%
b_{n}$ only if the minimal value of $S$ within the interval $[0,n]$ is
attained at the beginning or at the end of this interval. To this aim we
use, as earlier, the notation $\tau _{n}=\min \left\{ 0\leq k\leq
n:S_{k}=L_{n}\right\} $ and show that the following statement is valid.

\begin{lemma}
\label{L_Begend}Given Hypotheses A and B we have%
\begin{equation*}
\lim_{M\rightarrow \infty }\lim_{n\rightarrow \infty }b_{n}^{-1}\mathbf{E}%
\left[ \mathbf{P}_{\mathfrak{e}}\left( Z_{n}>0\right) e^{-\rho S_{n}};\tau
_{n}\in \left[ M,n-M\right] \right] =0.
\end{equation*}
\end{lemma}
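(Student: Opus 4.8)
First I would start from the elementary bound $\mathbf{P}_{\mathfrak{e}}(Z_n>0)\le\min(1,\mathbf{E}_{\mathfrak{e}}[Z_j])=\min(1,e^{S_j})$, valid for every $0\le j\le n$ (since $\{Z_n>0\}\subseteq\{Z_j>0\}$ and $Z_0=1$). Taking $j=\tau_n$ gives $\mathbf{P}_{\mathfrak{e}}(Z_n>0)\le\min(1,e^{L_n})$, and I would split $\mathbf{E}\bigl[\mathbf{P}_{\mathfrak{e}}(Z_n>0)e^{-\rho S_n};\tau_n\in[M,n-M]\bigr]$ according to whether $L_n<0$ or $L_n\ge0$. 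The argument then rests on a duality decomposition of $\{\tau_n=k\}$ together with the random walk results recalled in Subsection~\ref{RW}; recall that under $\mathbf{P}$ the increments $X_i$ are i.i.d.\ with $\mathbf{E}[X]=-a<0$ and regularly varying tail $A$.

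On $\{L_n<0,\tau_n=k\}$ (which forces $k\ge1$ and $S_k=\min_{0\le i\le n}S_i$) I would use $\mathbf{P}_{\mathfrak{e}}(Z_n>0)\le e^{S_k}$ and the first-occurrence structure of $\tau_n$: this event equals $\{M^{-}_k<0\}\cap\{L^{+}_{n-k}\ge0\}$, where $M^{-}_k$ is the maximum of the time-reversed walk $(S_k-S_{k-i})_{1\le i\le k}$ and $L^{+}_{n-k}$ the minimum of the forward walk $(S_{k+i}-S_k)_{1\le i\le n-k}$; these two walks are independent, each has the law of $S$ under $\mathbf{P}$, and their endpoints are $S_k$ and $S_n-S_k$. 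Writing $e^{S_k}e^{-\rho S_n}=e^{(1-\rho)S_k}e^{-\rho(S_n-S_k)}$, the $k$-th term factors, so the whole Case-$L_n<0$ contribution is at most $\sum_{k=M}^{n-M}I_kJ_{n-k}$ with $I_k=\mathbf{E}[e^{(1-\rho)S_k};M_k<0]$ and $J_m=\mathbf{E}[e^{-\rho S_m};L_m\ge0]$. By Lemma~\ref{LExponent} (applicable since $1-\rho>0$, $\rho>0$ and (\ref{remainder}) holds), $I_k\sim b_k\int_0^\infty e^{-(1-\rho)z}U(z)\,dz$ and $J_m\sim b_m\int_0^\infty e^{-\rho z}V(-z)\,dz$, both constants being finite because $U(z)=O(z)$ and $V(-\cdot)$ is bounded; thus $I_k,J_m$ are regularly varying of index $-(\beta+1)<-1$, hence summable, and $I_k,J_m\le1$. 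I would then bound the convolution $\sum_{k=M}^{n-M}I_kJ_{n-k}$ by splitting at $k=\lfloor n/2\rfloor$: for $M\le k\le\lfloor n/2\rfloor$ one has $n-k\ge n/2$, so $J_{n-k}\le C'b_n$ uniformly (regular variation and eventual monotonicity of $b$), giving $\sum_{k=M}^{\lfloor n/2\rfloor}I_kJ_{n-k}\le C'b_n\sum_{k\ge M}I_k$; the range $\lceil n/2\rceil\le k\le n-M$ is handled symmetrically, bounding $I_k\le C''b_n$ and summing $\sum_{m\ge M}J_m$. Dividing by $b_n$, letting $n\to\infty$ and then $M\to\infty$ yields $0$.

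On $\{L_n\ge0\}$ one has $S_n\ge0$, so $e^{-\rho S_n}\le1$, and only the crude bound $\mathbf{P}_{\mathfrak{e}}(Z_n>0)\le1$ is available; the Case-$L_n\ge0$ contribution is then at most $\sum_{l\ge0}e^{-\rho l}\,\mathbf{P}\bigl(L_n\ge0,\ \tau_n\in[M,n-M],\ S_n\in[l,l+1)\bigr)$, the factor $e^{-\rho l}$ being kept so that the sum over $l$ converges. On $\{L_n\ge0,\tau_n=k\}$ with $k\in[M,n-M]$ the walk stays above its interior value $S_k\ge0$ on all of $[1,n]$, and since it has negative drift this forces either two macroscopic increments (one in $[1,k]$ lifting the walk above $S_k$, one in $(k,n]$ opposing the drift afterwards) or an atypical bulk behaviour; the former is controlled by (\ref{twojumps}) (extended to shifted windows $\{S_n\in[l,l+1)\}$), the latter by Lemma~\ref{Nlemma}(i). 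Both give $o(b_n)$ with an $e^{-\rho l}$-summable majorant in $l$, and summing over $l$ and then letting $M\to\infty$ finishes this case, proving Lemma~\ref{L_Begend}.

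I expect the main obstacle to be this last step: reducing the ``$L_n\ge0$'' case to the two-big-jumps and no-big-jump events and making it quantitative via Lemma~\ref{Nlemma} and (\ref{twojumps}), in particular dealing with the unboundedness of $S_n$ through the windowing-with-$e^{-\rho l}$-weights device, because here the first-moment bound on the survival probability is genuinely too weak; a secondary point requiring care is the uniform control of the ratios $b_{n-k}/b_n$ and $b_k/b_n$ in the convolution estimate, i.e.\ the ``single big jump'' mechanism that concentrates the mass of $\sum_k I_kJ_{n-k}$ near $k\approx0$ and $k\approx n$.
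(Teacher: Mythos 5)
Your ``Case $L_n<0$'' is essentially the paper's proof. The paper uses the same first-moment bound $\mathbf{P}_{\mathfrak{e}}(Z_n>0)\le e^{S_{\tau_n}}$, writes $e^{S_{\tau_n}-\rho S_n}=e^{(1-\rho)S_{\tau_n}}\,e^{-\rho(S_n-S_{\tau_n})}$, factors $\{\tau_n=k\}$ into the pre-$k$ descent event and the post-$k$ ascent event by independence of increments, applies Lemma~\ref{LExponent} to both factors, and then bounds the resulting convolution $\sum_{k=M}^{n-M}I_kJ_{n-k}$ by splitting at $\lfloor n/2\rfloor$ and using regular variation of $b_k$. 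That part of your proposal is correct and is the whole argument.

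Your ``Case $L_n\ge 0$'' is a red herring and is where your proposal goes astray. First, with $\tau_n$ being the first time the minimum of the path $(S_0,\dots,S_n)$ is attained (the convention needed for the identity $\{\tau_n=n\}=\{M_n<0\}$ used in Lemma~\ref{LExponent}), the constraint $\tau_n\in[M,n-M]$ with $M\ge 1$ already forces $S_{\tau_n}<S_0=0$, hence $L_n<0$; the set you are trying to control is empty. Second, and more to the point, the paper never needs a case distinction at all: the bound $\mathbf{P}_{\mathfrak{e}}(Z_n>0)\le e^{S_{\tau_n}}$ is applied unconditionally, and once you factor, Lemma~\ref{LExponent} delivers the $b_k$ (respectively $b_{n-k}$) asymptotics for \emph{both} factors $\mathbf{E}[e^{(1-\rho)S_k};\tau_k=k]$ and $\mathbf{E}[e^{-\rho S_{n-k}};L_{n-k}\ge 0]$, so there is no situation in which ``only the crude bound $\mathbf{P}_{\mathfrak{e}}(Z_n>0)\le 1$ is available.'' The elaborate machinery you sketch for the second case (windowing in $l$ with $e^{-\rho l}$ weights, invoking~(\ref{twojumps}) and Lemma~\ref{Nlemma}(i) to rule out two big jumps or atypical bulk behaviour) is not only unnecessary but is also the least solid part of your write-up --- the reduction to a two-jump event is asserted rather than argued, and extending~(\ref{twojumps}) to shifted windows would itself require work. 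You correctly flagged this as ``the main obstacle''; in fact it is a non-obstacle, because the event it is designed to handle never occurs in the range $\tau_n\ge M\ge 1$.

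Minor point: your convolution estimate invokes ``eventual monotonicity of $b$'' to get $J_{n-k}\le C'b_n$ for $n-k\ge n/2$; what you really need (and what the paper uses) is only that $b_m$ is regularly varying of index $-(\beta+1)$, so $\sup_{n/2\le m\le n}b_m/b_n$ is bounded --- a Potter-type bound suffices without any monotonicity claim.
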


\begin{proof}
In view of the estimate%
\begin{equation*}
\mathbf{P}_{\mathfrak{e}}\left( Z_{n}>0\right) \leq \min_{0\leq k\leq n}%
\mathbf{P}_{\mathfrak{e}}\left( Z_{n}>0\right) \leq \exp \left\{ \min_{0\leq
k\leq n}S_{k}\right\} =e^{S_{\tau _{n}}}
\end{equation*}%
we have%
\begin{eqnarray}
&&\mathbf{E}\left[ \mathbf{P}_{\mathfrak{e}}\left( Z_{n}>0\right) e^{-\rho
S_{n}};\tau _{n}\in \left[ M,n-M\right] \right]  \notag \\
&&\quad \leq \mathbf{E}\left[ e^{S_{\tau _{n}}-S_{n}};\tau _{n}\in \left[
M,n-M\right] \right]  \notag \\
&&\quad =\sum_{k=M}^{n-M}\mathbf{E}\left[ e^{(1-\rho )S_{k}+\rho
(S_{k}-S_{n})};\tau _{n}=k\right]  \notag \\
&&\quad =\sum_{k=M}^{n-M}\mathbf{E}\left[ e^{(1-\rho )S_{k}};\tau _{k}=k%
\right] \mathbf{E}\left[ e^{-\rho S_{n-k}};L_{n-k}\geq 0\right] .
\label{rwAppr}
\end{eqnarray}%
Hence, using Lemma \ref{LExponent} we get%
\begin{eqnarray}
&&\mathbf{E}\left[ \mathbf{P}_{\mathfrak{e}}\left( Z_{n}>0\right) e^{-\rho
S_{n}};\tau _{n}\in \left[ M,n-M\right] \right]  \notag \\
&&\quad \leq \left( \sum_{k=M}^{\left[ n/2\right] }+\sum_{k=\left[ n/2\right]
+1}^{n-M}\right) \mathbf{E}\left[ e^{(1-\rho )S_{k}};\tau _{k}=k\right]
\mathbf{E}\left[ e^{-\rho S_{n-k}};L_{n-k}\geq 0\right]  \notag \\
&&\quad \leq \frac{C_{1}}{n}\mathbf{P}\left( X>\frac{an}{2}\right)
\sum_{k=M}^{\left[ n/2\right] }\mathbf{E}\left[ e^{(1-\rho )S_{k}};\tau
_{k}=k\right]  \notag \\
&&\quad +\frac{C_{2}}{n}\mathbf{P}\left( X>\frac{an}{2}\right) \sum_{k=M}^{%
\left[ n/2\right] }\mathbf{E}\left[ e^{-\rho S_{k}};L_{k}\geq 0\right] \leq
\varepsilon _{M}b_{n}  \label{Rem3}
\end{eqnarray}%
where $\varepsilon _{M}\rightarrow 0$ as $M\rightarrow \infty $.
\end{proof}

The following statement easily follows from (\ref{Rem3}) by taking $M=0$.

\begin{corollary}
\label{C_bound}Given Hypotheses A and B there exists $C\in \left( 0,\infty
\right) $ such that, for all $n=1,2,...$%
\begin{equation*}
\mathbf{E}\left[ \mathbf{P}_{\mathfrak{e}}\left( Z_{n}>0\right) e^{-\rho
S_{n}}\right] \leq \mathbf{E}\left[ e^{S_{\tau _{n}}-\rho S_{n}}\right] \leq
Cb_{n}.
\end{equation*}
\end{corollary}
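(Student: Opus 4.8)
\textbf{Proof proposal for Corollary \ref{C_bound}.}

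The plan is to observe that the corollary is an immediate consequence of the computation already carried out in the proof of Lemma \ref{L_Begend}, simply by tracking the estimate with $M=0$ and without discarding the boundary terms $\tau_n\in\{0,\ldots,M-1\}\cup\{n-M+1,\ldots,n\}$. First I would record the pointwise bound $\mathbf{P}_{\mathfrak{e}}(Z_n>0)\le e^{S_{\tau_n}}$, which follows from $\mathbf{P}_{\mathfrak{e}}(Z_n>0)\le \mathbf{P}_{\mathfrak{e}}(Z_k>0)\le \mathbf{E}_{\mathfrak{e}}[Z_k]=e^{S_k}$ for every $0\le k\le n$, minimized over $k$; this gives the first inequality in the statement. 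Multiplying by $e^{-\rho S_n}$ and taking $\mathbf{E}$ yields the middle quantity $\mathbf{E}[e^{S_{\tau_n}-\rho S_n}]$.

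Next I would split $\mathbf{E}[e^{S_{\tau_n}-\rho S_n}]$ according to the location of $\tau_n$ exactly as in \eqref{rwAppr}: using $S_{\tau_n}-S_n=(1-\rho)S_k+\rho(S_k-S_n)$ on $\{\tau_n=k\}$ and the fact that on this event $S$ restricted to $[0,k]$ attains its minimum at $k$ while $S_n-S_k$ stays nonnegative (by the Markov property and independence of increments, decomposing into the pre-$k$ and post-$k$ parts), one obtains
\begin{equation*}
\mathbf{E}\left[e^{S_{\tau_n}-\rho S_n}\right]=\sum_{k=0}^{n}\mathbf{E}\left[e^{(1-\rho)S_k};\tau_k=k\right]\mathbf{E}\left[e^{-\rho S_{n-k}};L_{n-k}\ge 0\right].
\end{equation*}
Since $1-\rho>0$ and $\rho>0$, Lemma \ref{LExponent} applies to each factor: $\mathbf{E}[e^{(1-\rho)S_k};\tau_k=k]\sim c_1 b_k$ and $\mathbf{E}[e^{-\rho S_{n-k}};L_{n-k}\ge 0]\sim c_2 b_{n-k}$ as the indices tend to infinity, with $b_k=\beta A(ak)/(ak)$ regularly varying of index $-\beta-1$ (hence in particular both factors are uniformly bounded in $k$), and all terms are finite and bounded for small index. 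The sum over $k\le n/2$ is dominated, as in \eqref{Rem3}, by $C n^{-1}\mathbf{P}(X>an/2)\sum_k \mathbf{E}[e^{(1-\rho)S_k};\tau_k=k]$, and the series $\sum_k \mathbf{E}[e^{(1-\rho)S_k};\tau_k=k]\le \sum_k \mathbf{E}[e^{\lambda S_k};M_k<0]<\infty$ converges because the summands decay like $b_k$, i.e. polynomially of order $\beta+1>3$; symmetrically for $k>n/2$. Since $n^{-1}\mathbf{P}(X>an/2)$ is, up to a slowly varying factor, of order $n^{-\beta-1}$, this is $O(b_n)$, which is exactly the claimed bound with an absolute constant $C$.

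I do not expect any genuine obstacle here: the only points requiring a line of care are (a) justifying the convergence of the series $\sum_k \mathbf{E}[e^{(1-\rho)S_k};\tau_k=k]$ and $\sum_k\mathbf{E}[e^{-\rho S_k};L_k\ge 0]$, which follows from the asymptotics $b_k\sim \mathrm{const}\cdot l(k)k^{-\beta-1}$ in Lemma \ref{LExponent} together with $\beta>2$, and (b) checking that the finitely many small-index terms (where Lemma \ref{LExponent} gives no asymptotic) are each finite — which is clear since $\mathbf{E}[e^{(1-\rho)S_k};\tau_k=k]\le 1$ and $\mathbf{E}[e^{-\rho S_k};L_k\ge 0]\le 1$. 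Everything else is a verbatim specialization of \eqref{rwAppr}–\eqref{Rem3} to $M=0$.
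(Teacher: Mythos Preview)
Your proposal is correct and follows exactly the paper's approach: the paper merely remarks that the corollary ``easily follows from \eqref{Rem3} by taking $M=0$'', and you have spelled out precisely that specialization (the pointwise bound $\mathbf{P}_{\mathfrak{e}}(Z_n>0)\le e^{S_{\tau_n}}$, the factorization over $\{\tau_n=k\}$, and the two half-sums controlled via Lemma~\ref{LExponent}). The only cosmetic slip is the identity you quote, which should read $S_{\tau_n}-\rho S_n=(1-\rho)S_k+\rho(S_k-S_n)$ on $\{\tau_n=k\}$ rather than $S_{\tau_n}-S_n=\ldots$, but the subsequent computation uses the correct expression.
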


\subsection{Fluctuations of the random walk $S$}

Introduce the event%
\begin{equation*}
\mathcal{C}_{N}=\left\{ -N<S_{\tau _{n}}\leq S_{n}\leq N+S_{\tau
_{n}}<N\right\} .
\end{equation*}%
In particular, given $\mathcal{C}_{N}$
\begin{equation*}
-N<S_{n}<N.
\end{equation*}%
In what follows we agree to denote by $\varepsilon _{N},\varepsilon _{N,n}$
or $\varepsilon _{N,K,n}$ functions of the low indices such that
\begin{equation*}
\lim_{N\rightarrow \infty }\varepsilon _{N}=\lim_{N\rightarrow \infty
}\limsup_{n\rightarrow \infty }\left\vert \varepsilon _{N,n}\right\vert
=\lim_{N\rightarrow \infty }\limsup_{K\rightarrow \infty
}\limsup_{n\rightarrow \infty }\left\vert \varepsilon _{N,K,n}\right\vert =0,
\end{equation*}%
i.e., the $\limsup $ (or \thinspace $\lim $) are sequentially taken with
respect to the indices of $\varepsilon _{\cdot \cdot \cdot }$ in the reverse
order. Note that the functions are not necessarily the same in different
formulas or even within one and the same complicated expression.

\begin{lemma}
\label{L_SmallEnd}Given Hypotheses A and B for any fixed $k$%
\begin{equation*}
\lim_{N\rightarrow \infty }\limsup_{n\rightarrow \infty }b_{n}^{-1}\mathbf{E}%
\left[ \mathbf{P}_{\mathfrak{e}}\left( Z_{n}>0\right) e^{-\rho S_{n}};\tau
_{n}=k,\mathcal{\bar{C}}_{N}\right] =0
\end{equation*}%
and%
\begin{equation*}
\lim_{N\rightarrow \infty }\limsup_{n\rightarrow \infty }b_{n}^{-1}\mathbf{E}%
\left[ \mathbf{P}_{\mathfrak{e}}\left( Z_{n}>0\right) e^{-\rho S_{n}};\tau
_{n}=n-k,\mathcal{\bar{C}}_{N}\right] =0.
\end{equation*}
\end{lemma}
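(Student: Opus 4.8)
\textbf{Proof plan for Lemma \ref{L_SmallEnd}.}
The strategy is to reduce the claim to the random-walk estimates of Section \ref{RW}, exploiting the bound $\mathbf{P}_{\mathfrak{e}}(Z_{n}>0)\leq e^{S_{\tau_{n}}}$ already used in Lemma \ref{L_Begend}. Consider first the case $\tau_{n}=k$ with $k$ fixed. On the complement $\mathcal{\bar{C}}_{N}$ at least one of the following fails: $S_{\tau_{n}}>-N$, $S_{n}-S_{\tau_{n}}\leq N$, or $S_{n}<N$; equivalently, either $S_{k}=L_{n}\leq -N$, or $S_{n}-S_{k}>N$, or $S_{n}\geq N$. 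I would split
$$
\mathbf{E}\left[\mathbf{P}_{\mathfrak{e}}(Z_{n}>0)e^{-\rho S_{n}};\tau_{n}=k,\mathcal{\bar{C}}_{N}\right]\leq \mathbf{E}\left[e^{S_{k}-\rho S_{n}};\tau_{n}=k,\mathcal{\bar{C}}_{N}\right]
$$
and then, using $\tau_{n}=k$ means $S_{k}=L_{n}$, factorize at time $k$ into a ``past'' factor $\mathbf{E}[e^{(1-\rho)S_{k}};\tau_{k}=k,\,\cdot]$ times a ``future'' factor $\mathbf{E}[e^{-\rho S_{n-k}};L_{n-k}\geq 0,\,\cdot]$, exactly as in $(\ref{rwAppr})$. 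Since $k$ is fixed, the past factor is a bounded constant (bounded by $\mathbf{E}[e^{(1-\rho)S_{k}}]<\infty$), and the whole expression will be controlled by the future factor.

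For the future factor I would use the refined estimates $(\ref{Hirano2})$ and the density uniformity $(\ref{densunif})$. Writing $S_{n}=S_{k}+(S_{n}-S_{k})$ with $S_{k}\in[-N,0]$ fixed in sign, the three failure modes translate into: $S_{k}<-N$ (contributes $\mathbf{E}[e^{(1-\rho)S_{k}};\tau_{k}=k,S_{k}<-N]$ in the past factor, which is $\varepsilon_{N}$ by dominated convergence since $e^{(1-\rho)S_{k}}$ is integrable); or, in the future block, $S_{n-k}>N-S_{k}\geq N$ together with $L_{n-k}\geq 0$, whose $e^{-\rho\,\cdot}$-weighted expectation is $b_{n-k}\int_{N}^{\infty}e^{-\rho z}V(-z)\,dz(1+o(1))$ by $(\ref{Hirano2})$, and $\int_{N}^{\infty}e^{-\rho z}V(-z)\,dz\to 0$ as $N\to\infty$ because $V(-\cdot)$ is bounded; or $S_{n}-S_{k}>N$, handled identically. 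Collecting the three pieces gives the bound $\varepsilon_{N,n}b_{n}$ (using $b_{n-k}\sim b_{n}$ for fixed $k$), which is what is claimed.

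The case $\tau_{n}=n-k$ is symmetric after the standard duality for random walks: reversing time, $(X_{1},\dots,X_{n})\mapsto(X_{n},\dots,X_{1})$ does not change the law (iid increments), and it swaps the roles of the two ends, so $\{\tau_{n}=n-k\}$ with the past/future split becomes a ``future of length $k$'' (bounded constant) times a ``past of length $n-k$'' of the form $\mathbf{E}[e^{(1-\rho)S_{n-k}};\tau_{n-k}=n-k,\,\cdot]$, to which $(\ref{Hirano4})$ applies with the analogous constant $\int_{N}^{\infty}e^{-\lambda z}U(z)\,dz\to 0$; here one needs $U(x)=O(x)$ so that the tail integral against $e^{-\rho z}$ indeed vanishes as $N\to\infty$. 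The main obstacle, as usual in this circle of arguments, is the bookkeeping of which of the three events defining $\mathcal{\bar{C}}_{N}$ lands in the past block and which in the future block, and making sure that after the Markov factorization at time $\tau_{n}$ the conditioning on $\{S_{n}-S_{\tau_{n}}\}$ being large or $\{S_{n}\}$ being large is expressed purely in terms of the future increments (it is, since $S_{n}-S_{\tau_{n}}$ and the sign constraints on $S_{n}$ given $S_{\tau_{n}}$ depend only on $X_{k+1},\dots,X_{n}$); once that is set up, each piece is a direct application of $(\ref{Hirano4})$--$(\ref{Hirano2})$ and the finiteness of $\mathbf{E}[e^{(1-\rho)S_{k}}]$.
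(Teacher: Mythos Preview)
Your approach is essentially the paper's: bound $\mathbf{P}_{\mathfrak{e}}(Z_n>0)\le e^{S_{\tau_n}}$, factorize at $\tau_n$, and feed each piece of $\bar{\mathcal{C}}_N$ into $(\ref{Hirano4})$ or $(\ref{Hirano2})$, with the ``short'' block of fixed length $k$ contributing a harmless constant. One correction: your parenthetical justification ``bounded by $\mathbf{E}[e^{(1-\rho)S_k}]<\infty$'' is false under Hypothesis~A, since $\rho_+=\rho<1$ forces $\mathbf{E}[e^{(1-\rho)X}]=m^{-1}\varphi(1)=\infty$; the correct (and simpler) reason the past factor $\mathbf{E}[e^{(1-\rho)S_k};\tau_k=k]$ is finite is that $\tau_k=k$ entails $M_k<0$, hence $S_k<0$ and $e^{(1-\rho)S_k}\le 1$.
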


\begin{proof}
In view of (\ref{Hirano2})%
\begin{eqnarray*}
&&\mathbf{E}\left[ \mathbf{P}_{\mathfrak{e}}\left( Z_{n}>0\right) e^{-\rho
S_{n}};\tau _{n}=k,S_{n}-S_{\tau _{n}}\geq N\right] \\
&&\quad \leq \mathbf{E}\left[ e^{(1-\rho )S_{\tau _{n}}}e^{-\rho
(S_{n}-S_{\tau _{n}})};\tau _{n}=k,S_{n}-S_{\tau _{n}}\geq N\right] \\
&&\qquad \leq \mathbf{E}\left[ e^{-\rho S_{n-k}};L_{n-k}\geq 0,S_{n-k}\geq N%
\right] \leq \varepsilon _{N}b_{n}
\end{eqnarray*}%
where $\varepsilon _{N}\rightarrow 0$ as $N\rightarrow \infty $ since $%
\int_{0}^{\infty }\exp (-\rho z)V(-z)dz<\infty $. Further,
\begin{eqnarray}
&&\mathbf{E}\left[ \mathbf{P}_{\mathfrak{e}}\left( Z_{n}>0\right) e^{-\rho
S_{n}};\tau _{n}=k,S_{\tau _{n}}\leq -N\right]  \notag \\
&&\quad \leq \mathbf{E}\left[ e^{(1-\rho )S_{\tau _{n}}}e^{-\rho
(S_{n}-S_{\tau _{n}})};\tau _{n}=k,S_{\tau _{n}}\leq -N\right]  \notag \\
&&\qquad \leq e^{-(1-\rho )N}\mathbf{E}\left[ e^{-\rho S_{n-k}};L_{n-k}\geq 0%
\right] \leq \varepsilon _{N}b_{n}.  \label{Nmin}
\end{eqnarray}%
This, in particular, means that%
\begin{equation}
\mathbf{E}\left[ e^{(1-\rho )S_{\tau _{n}}}e^{-\rho (S_{n}-S_{\tau
_{n}})};\tau _{n}=k,S_{n}\notin (-N,N)\right] =\varepsilon _{N,n}b_{n}
\label{Neg}
\end{equation}%
and%
\begin{eqnarray*}
&&\mathbf{E}\left[ \mathbf{P}_{\mathfrak{e}}\left( Z_{n}>0\right) e^{-\rho
S_{n}};\tau _{n}=k\right] \\
&&\quad =\mathbf{E}\left[ \mathbf{P}_{\mathfrak{e}}\left( Z_{n}>0\right)
e^{-\rho S_{n}};\tau _{n}=k,S_{\tau _{n}}\geq -N,S_{n}-S_{\tau _{n}}\leq N%
\right] +\varepsilon _{N,n}b_{n}.
\end{eqnarray*}%
Similarly, by (\ref{Hirano4})%
\begin{eqnarray*}
&&\mathbf{E}\left[ \mathbf{P}_{\mathfrak{e}}\left( Z_{n}>0\right) e^{-\rho
S_{n}};\tau _{n}=n-k,S_{\tau _{n}}\leq -N\right] \\
&&\quad \leq \mathbf{E}\left[ e^{(1-\rho )S_{\tau _{n}}}e^{-\rho
(S_{n}-S_{\tau _{n}})};\tau _{n}=n-k,S_{\tau _{n}}\leq -N\right] \\
&&\qquad \leq \mathbf{E}\left[ e^{(1-\rho )S_{n-k}};\tau
_{n-k}=n-k,S_{n-k}\leq -N\right] \\
&&\qquad =\mathbf{E}\left[ e^{(1-\rho )S_{n-k}};M_{n-k}<0,S_{n-k}\leq -N%
\right] =\varepsilon _{N,n}b_{n}
\end{eqnarray*}%
and
\begin{eqnarray*}
&&\mathbf{E}\left[ \mathbf{P}_{\mathfrak{e}}\left( Z_{n}>0\right) e^{-\rho
S_{n}};\tau _{n}=n-k,S_{n}-S_{\tau _{n}}\geq N\right] \\
&&\quad \leq \mathbf{E}\left[ e^{(1-\rho )S_{\tau _{n}}}e^{-\rho
(S_{n}-S_{\tau _{n}})};\tau _{n}=n-k,S_{n}-S_{\tau _{n}}\geq N\right] \\
&&\qquad \leq e^{-\rho N}\mathbf{E}\left[ e^{(1-\rho )S_{n-k}};\tau
_{n-k}=n-k\right] \\
&&\qquad =e^{-\rho N}\mathbf{E}\left[ e^{(1-\rho )S_{n-k}};M_{n-k}<0\right]
=\varepsilon _{N,n}b_{n}.
\end{eqnarray*}%
As a result we get%
\begin{eqnarray*}
&&\mathbf{E}\left[ \mathbf{P}_{\mathfrak{e}}\left( Z_{n}>0\right) e^{-\rho
S_{n}};\tau _{n}=n-k\right] \\
&&\quad =\mathbf{E}\left[ \mathbf{P}_{\mathfrak{e}}\left( Z_{n}>0\right)
e^{-\rho S_{n}};\tau _{n}=n-k,S_{\tau _{n}}\geq -N,S_{n}-S_{\tau _{n}}\leq N%
\right] +\varepsilon _{N,n}b_{n}.
\end{eqnarray*}%
This completes the proof of the lemma.
\end{proof}

Lemmas \ref{L_Begend} and \ref{L_SmallEnd} easily imply the following
statement:

\begin{corollary}
\label{C_TimeSpace}Under Hypotheses A and B%
\begin{eqnarray}
&&\mathbf{E}\left[ \mathbf{P}_{\mathfrak{e}}\left( Z_{n}>0\right) e^{-\rho
S_{n}}\right]  \notag \\
&&\quad =\mathbf{E}\left[ \mathbf{P}_{\mathfrak{e}}\left( Z_{n}>0\right)
e^{-\rho S_{n}};\left\vert S_{n}\right\vert <N;\tau _{n}\in \lbrack 0,M]\cup
\lbrack n-M,n]\right] +\varepsilon _{N,M,n}b_{n}  \notag \\
&&\qquad =\mathbf{E}\left[ \mathbf{P}_{\mathfrak{e}}\left( Z_{n}>0\right)
e^{-\rho S_{n}};\left\vert S_{n}\right\vert <N\right] +\varepsilon
_{N,n}b_{n}  \notag \\
&&\quad \qquad =\mathbf{E}\left[ \mathbf{P}_{\mathfrak{e}}\left(
Z_{n}>0\right) e^{-\rho S_{n}};S_{\tau _{n}}\geq -N,S_{n}<N\right] +\tilde{%
\varepsilon}_{N,n}b_{n}  \label{Remainder1}
\end{eqnarray}%
where
\begin{equation*}
\lim_{N\rightarrow \infty }\limsup_{M\rightarrow \infty
}\limsup_{n\rightarrow \infty }\left\vert \varepsilon _{N,M,n}\right\vert
=\lim_{N\rightarrow \infty }\limsup_{n\rightarrow \infty }\left( \left\vert
\varepsilon _{N,n}\right\vert +\left\vert \tilde{\varepsilon}%
_{N,n}\right\vert \right) =0.
\end{equation*}
\end{corollary}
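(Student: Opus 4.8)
The three displayed identities should be read as three separate approximations of $\mathbf{E}[\mathbf{P}_{\mathfrak{e}}(Z_n>0)e^{-\rho S_n}]$, each obtained by coarsening the conditioning event and then checking that the set we add or delete contributes $o(b_n)$ in the iterated-limit sense fixed above. The three workhorses are: Lemma~\ref{L_Begend}, which discards $\{M<\tau_n<n-M\}$ at cost $\varepsilon_{M,n}b_n$; Lemma~\ref{L_SmallEnd}, which for each fixed $k$ discards $\{\tau_n=k\}\cap\mathcal{\bar{C}}_{N}$ and $\{\tau_n=n-k\}\cap\mathcal{\bar{C}}_{N}$ at cost $\varepsilon_{N,n}b_n$ (these bounds ultimately resting on (\ref{Hirano4})--(\ref{Hirano2})); and the a priori estimate $\mathbf{E}[\mathbf{P}_{\mathfrak{e}}(Z_n>0)e^{-\rho S_n}]\le \mathbf{E}[e^{S_{\tau_n}-\rho S_n}]\le Cb_n$ of Corollary~\ref{C_bound}, which caps the whole (finite, but lengthening with $M$) sum over the end values of $\tau_n$ uniformly in $n$.

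First I would prove the middle identity, $\mathbf{E}[\mathbf{P}_{\mathfrak{e}}(Z_n>0)e^{-\rho S_n}]=\mathbf{E}[\mathbf{P}_{\mathfrak{e}}(Z_n>0)e^{-\rho S_n};|S_n|<N]+\varepsilon_{N,n}b_n$. It suffices to bound $\mathbf{E}[\mathbf{P}_{\mathfrak{e}}(Z_n>0)e^{-\rho S_n};|S_n|\ge N]$. Decompose according to $\tau_n$: the contribution of $\{M<\tau_n<n-M\}$ is $\le\varepsilon_{M,n}b_n$ by Lemma~\ref{L_Begend}; for the at most $2(M+1)$ values $\tau_n\in[0,M]\cup[n-M,n]$ we use $\{|S_n|\ge N\}\subseteq\mathcal{\bar{C}}_{N}$ (since $\mathcal{C}_{N}$ forces $|S_n|<N$) to dominate the $\tau_n=k$ and $\tau_n=n-k$ pieces by $\mathbf{E}[\mathbf{P}_{\mathfrak{e}}(Z_n>0)e^{-\rho S_n};\tau_n=k\text{ (resp. }n-k\text{)},\mathcal{\bar{C}}_{N}]=\varepsilon_{N,n}b_n$ via Lemma~\ref{L_SmallEnd}, their total being $\le Cb_n$ in any case. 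Since the left-hand side does not involve $M$, taking $\limsup_n$ at fixed $M$ and then $M\to\infty$ removes the $\varepsilon_{M,n}$-term, while for each fixed $M$ the finitely many $\varepsilon_{N,n}$-terms vanish as $N\to\infty$; this forces $\mathbf{E}[\mathbf{P}_{\mathfrak{e}}(Z_n>0)e^{-\rho S_n};|S_n|\ge N]=\varepsilon_{N,n}b_n$, which is the claim.

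The first identity is then immediate: $\mathbf{E}[\mathbf{P}_{\mathfrak{e}}(Z_n>0)e^{-\rho S_n};|S_n|<N]$ differs from its restriction to $\{\tau_n\in[0,M]\cup[n-M,n]\}$ only by an expectation over $\{M<\tau_n<n-M\}$, which is $\varepsilon_{M,n}b_n$ by Lemma~\ref{L_Begend}; combining with the middle identity gives the claimed $\varepsilon_{N,M,n}b_n$. For the last identity, note that on $\{S_{\tau_n}\ge-N,S_n<N\}$ one has $S_n\ge S_{\tau_n}\ge-N$, so this event differs from $\{|S_n|<N,S_{\tau_n}\ge-N\}$ only on $\{S_n=-N\}$, which is null because $S_n$ has a density under $\mathbf{P}$; and $\mathbf{E}[\mathbf{P}_{\mathfrak{e}}(Z_n>0)e^{-\rho S_n};|S_n|<N,S_{\tau_n}<-N]$ is estimated exactly as in the derivation of (\ref{Nmin}) --- decompose over $\tau_n$, bound the end values using $\mathbf{P}_{\mathfrak{e}}(Z_n>0)e^{-\rho S_n}\le e^{(1-\rho)S_{\tau_n}}e^{-\rho(S_n-S_{\tau_n})}$ together with the factorisation of the walk at its minimum, and the middle range by Lemma~\ref{L_Begend} --- so it is $\varepsilon_{N,n}b_n$, and the last identity follows from the middle one.

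I expect the only real obstacle here to be purely organisational: controlling the order in which the limits in $M$, $N$ and $n$ are taken when summing the ever-growing family of $\tau_n$-contributions. This is exactly what the uniform bound of Corollary~\ref{C_bound} is for: it lets one sum all the end contributions first and only afterwards let $n\to\infty$, $M\to\infty$ and $N\to\infty$ in the prescribed order, rather than having to juggle $2(M+1)$ error terms simultaneously.
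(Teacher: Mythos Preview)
Your argument is correct and follows exactly the route the paper indicates: it just says the corollary ``easily follows'' from Lemmas~\ref{L_Begend} and~\ref{L_SmallEnd}, and you have spelled out that deduction in detail, including the care with the order of limits in $n$, $M$, $N$. One small remark: Corollary~\ref{C_bound} is not really needed in your argument --- once $M$ is fixed there are only finitely many end values of $\tau_n$, so the sum of the corresponding $\varepsilon_{N,n}$ terms is again of the form $\varepsilon_{N,n}$; the uniform bound is reassuring but not essential here.
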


\subsection{Asymptotic of the survival probability}

In this section we investigate in detail the properties of the survival
probability for the processes meeting Hypotheses A and B. As we know (see (%
\ref{Cemes2})) this probability is expressed as
\begin{equation*}
\mathbb{P}\left( Z_{n}>0\right) =m^{n}\mathbf{E}\left[ \mathbf{P}_{\mathfrak{%
e}}\left( Z_{n}>0\right) e^{-\rho S_{n}}\right] .
\end{equation*}

We wish to show that $\mathbf{E}\left[ \mathbf{P}_{\mathfrak{e}}\left(
Z_{n}>0\right) e^{-\rho S_{n}}\right] $ is of order $b_{n}$ as $n\rightarrow
\infty $.

First we get rid of the trajectories giving the contribution of the order $%
o(b_{n})$ to the quantity in question. Let%
\begin{equation*}
\mathcal{D}_{N}(j)=\left\{ -N<S_{\tau _{n}}\leq S_{n}<N,\,X_{j}\geq \delta
an\right\} .
\end{equation*}

\begin{lemma}
\label{L_Negl2}If Hypotheses A and B are valid then there exists $\,\delta
\in \left( 0,1/4\right) $ such that%
\begin{equation}
\mathbf{E}\left[ \mathbf{P}_{\mathfrak{e}}\left( Z_{n}>0\right) \exp (-\rho
S_{n})\right] =\sum_{j=1}^{J}\mathbf{E}\left[ \mathbf{P}_{\mathfrak{e}%
}\left( Z_{n}>0\right) \exp (-\rho S_{n});\mathcal{D}_{N}(j)\right]
+\varepsilon _{N,J,n}b_{n}.  \label{MorNegl}
\end{equation}
\end{lemma}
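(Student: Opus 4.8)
The plan is to decompose the expectation $\mathbf{E}\left[ \mathbf{P}_{\mathfrak{e}}\left( Z_{n}>0\right) e^{-\rho S_{n}}\right]$ according to the behaviour of the increments $X_{1},\dots,X_{n}$, using the already established Corollary~\ref{C_TimeSpace} to restrict to the event $\{S_{\tau_n}\geq -N,\,S_n<N\}$ up to an error $\varepsilon_{N,n}b_n$. On this event, one argues that a contribution of order $b_n$ forces exactly one "big jump" to occur at an early time: I would first split the restricted event according to whether $\max_{1\le j\le n}X_j\ge\delta an$ or not. When $\max_{1\le j\le n}X_j<\delta an$, part~(i) of Lemma~\ref{Nlemma} (in the form valid after taking the limit $J\to\infty$, i.e. the statement preceding \eqref{twojumps}) shows this contribution is $o(b_n)$ provided $\delta$ is chosen below $\delta_0$; here one also uses the crude bound $\mathbf{P}_{\mathfrak{e}}(Z_n>0)\le e^{S_{\tau_n}}\le e^N$ on $\mathcal{C}_N$ together with $A(x)=O(b_n)$-type estimates.

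Next, on the complementary event at least one $X_j\ge \delta an$. By \eqref{twojumps} the probability (intersected with $\{L_n\ge -N,\,|S_n|\le K\}$) of \emph{two} such jumps is $o(b_n)$, so up to $\varepsilon_{N,n}b_n$ exactly one index $j$ satisfies $X_j\ge\delta an$. One then needs to confine that unique big jump to an initial block $j\in\{1,\dots,J\}$. This is precisely the content of the second display in part~(i) of Lemma~\ref{Nlemma}: for fixed $\delta$, $\lim_{J\to\infty}\limsup_{n\to\infty}b_n^{-1}\mathbf{P}\left(L_n\ge -N,\,\max_{J\le j\le n}X_j\ge\delta an,\,S_n\in[l,l+1)\right)=0$; summing over the $O(N)$ relevant integer values $l\in(-N,N)$ (again dominating $\mathbf{P}_{\mathfrak{e}}(Z_n>0)e^{-\rho S_n}$ by $e^{(1-\rho)S_{\tau_n}}e^{-\rho(S_n-S_{\tau_n})}\le e^{N}$ on $\mathcal{C}_N$) gives that the big jump at a late position contributes $\varepsilon_{N,J,n}b_n$. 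What remains is exactly $\sum_{j=1}^{J}\mathbf{E}\left[\mathbf{P}_{\mathfrak{e}}(Z_n>0)e^{-\rho S_n};\mathcal{D}_N(j)\right]$, up to an error whose iterated $\limsup$ in the order $n\to\infty$, $J\to\infty$, $N\to\infty$ vanishes, which is the claimed \eqref{MorNegl}.

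The main obstacle I anticipate is not any single estimate but the careful bookkeeping needed to combine the three error terms of Lemma~\ref{Nlemma} with the previously accumulated error $\tilde\varepsilon_{N,n}b_n$ from Corollary~\ref{C_TimeSpace} so that everything collapses into a single $\varepsilon_{N,J,n}b_n$ with the limits taken in the correct (reverse) order; in particular one must be sure that $\delta$ can be fixed once and for all (below $\delta_0$) \emph{before} sending $J\to\infty$ and $N\to\infty$, and that the bound $\mathbf{P}_{\mathfrak{e}}(Z_n>0)e^{-\rho S_n}\le e^{S_{\tau_n}-\rho S_n}$ together with Corollary~\ref{C_bound} legitimately lets us replace the branching probability by the pure random-walk functionals appearing in Lemma~\ref{Nlemma} and in \eqref{twojumps}. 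Once the restriction to $\mathcal{C}_N$ is in force this replacement costs only the universal factor $e^{N}$, which is harmless since $N$ is sent to infinity last.
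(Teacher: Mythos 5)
Your proposal is correct and is essentially the paper's own argument: reduce to $\{S_{\tau_n}\geq -N,\,S_n<N\}$ via Corollary~\ref{C_TimeSpace}, dominate the branching factor by the random-walk functional of~(\ref{Le1}), and then use Lemma~\ref{Nlemma} to delete the no-big-jump, late-big-jump and two-big-jump contributions; the paper does the last two deletions in the opposite order (first confine the big jump to $j\le J$ via the second display of~(i), then rule out two jumps among $\{1,\dots,J\}$ via~(iii)), but this is interchangeable with your route through~(\ref{twojumps}). One small slip to flag: in the no-big-jump step the correct reference is the first display of Lemma~\ref{Nlemma}(i) (the $\varepsilon_M(k)n^{-\beta-1}$ bound on $\mathbf{P}_u(\max_j X_j\le\delta an,\ S_n\ge k)$), not~(\ref{twojumps}) or the $J\to\infty$ statement as your parenthetical suggests, since those concern the presence of big jumps rather than their absence.
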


\begin{proof}
In view of Corollary \ref{C_TimeSpace}, we just need to prove that
\begin{eqnarray}
&&\mathbf{E}\left[ \mathbf{P}_{\mathfrak{e}}\left( Z_{n}>0\right) e^{-\rho
S_{n}};S_{\tau _{n}}\geq -N,S_{n}<N\right]  \notag  \label{toprove} \\
&&\qquad =\sum_{j=1}^{J}\mathbf{E}\left[ \mathbf{P}_{\mathfrak{e}}\left(
Z_{n}>0\right) \exp (-\rho S_{n});\mathcal{D}_{N}(j)\right] +\varepsilon
_{N,J,n}b_{n}.
\end{eqnarray}%
From the estimate
\begin{equation}
\mathbf{P}_{\mathfrak{e}}\left( Z_{n}>0\right) \exp (-\rho S_{n})\leq \exp
(S_{\tau _{n}}-\rho S_{n})=\exp ((1-\rho )S_{\tau _{n}}-\rho (S_{n}-S_{\tau
_{n}}))\leq 1  \label{Le1}
\end{equation}%
we deduce by Lemma \ref{Nlemma} (i) that
\begin{equation*}
\mathbf{E}\left[ \mathbf{P}_{\mathfrak{e}}\left( Z_{n}>0\right) e^{-\rho
S_{n}};S_{\tau _{n}}\geq -N,S_{n}<N,\max_{0\leq j\leq n}X_{j}<\delta an%
\right] =\varepsilon _{N,n}b_{n}
\end{equation*}%
for $\delta \in (0,\delta _{0})$ and
\begin{equation*}
\mathbf{E}\left[ \mathbf{P}_{\mathfrak{e}}\left( Z_{n}>0\right) e^{-\rho
S_{n}};S_{\tau _{n}}\geq -N,S_{n}<N,\max_{J\leq j\leq n}X_{j}\geq \delta an%
\right] =\varepsilon _{N,J,n}b_{n}.
\end{equation*}%
Thus,
\begin{eqnarray*}
&&\mathbf{E}\left[ \mathbf{P}_{\mathfrak{e}}\left( Z_{n}>0\right) e^{-\rho
S_{n}};S_{\tau _{n}}\geq -N,S_{n}<N\right] \\
&&\quad =\mathbf{E}\left[ \mathbf{P}_{\mathfrak{e}}\left( Z_{n}>0\right)
e^{-\rho S_{n}};S_{\tau _{n}}\geq -N,S_{n}<N,\max_{0\leq j\leq J}X_{j}\geq
\delta an\right] +\varepsilon _{N,J,n}b_{n}.
\end{eqnarray*}%
Finally thanks to Lemma \ref{Nlemma}(iii), there is only one big jump
(before $J$),i.e.
\begin{equation*}
\mathbf{E}\left[ \mathbf{P}_{\mathfrak{e}}\left( Z_{n}>0\right) e^{-\rho
S_{n}};S_{\tau _{n}}\geq -N,S_{n}<N,\cup _{i\neq j}^{J}\left\{ X_{i}\geq
\delta an,X_{j}\geq \delta an\right\} \right] =\varepsilon _{N,J,n}b_{n}.
\end{equation*}%
It yields $(\ref{toprove})$ and ends up the proof.
\end{proof}

Now we fix $j\in \lbrack 1,J]$ and investigate the quantity%
\begin{equation*}
\mathbf{E}\left[ \mathbf{P}_{\mathfrak{e}}\left( Z_{n}>0\right) \exp (-\rho
S_{n});\mathcal{D}_{N}(j)\right] .
\end{equation*}

First, we check that $S_{j-1}$ is bounded on the event we focus on.

\begin{lemma}
\label{L_Sj}If Hypotheses A and B are valid then, for every fixed $j$
\begin{equation*}
\mathbf{E}\left[ \mathbf{P}_{\mathfrak{e}}\left( Z_{n}>0\right) \exp (-\rho
S_{n});\left\vert S_{j-1}\right\vert \geq N,X_{j}\geq \delta an\right]
=\varepsilon _{N,n}b_{n}.
\end{equation*}
\end{lemma}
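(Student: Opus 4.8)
The plan is to decompose the trajectory at the location of the big jump and use independence together with the tail estimates already at our disposal. Recall that we are working on the event $\{X_j\geq \delta an\}$ and we want to show that the extra requirement $|S_{j-1}|\geq N$ makes the contribution negligible, in the sense that it is $\varepsilon_{N,n}b_n$. First I would use the crude bound \eqref{Le1}, namely $\mathbf{P}_{\mathfrak{e}}(Z_n>0)\exp(-\rho S_n)\leq \exp(S_{\tau_n}-\rho S_n)\leq \exp((1-\rho)S_{\tau_n}-\rho(S_n-S_{\tau_n}))\leq 1$, to replace the survival probability by something purely in terms of the random walk. Since $j$ is fixed and $X_j$ is huge (of order $\delta an$), on the event $\{X_j\geq \delta an\}$ the minimum $\tau_n$ of the walk is attained either before time $j$ (among the first $j-1$ steps) or after time $j$; in either case the walk after the big jump starts from a level $S_j\geq X_j-|S_{j-1}|$ which is large, so the post-jump minimum is controlled by the pre-jump behaviour or by a fresh random walk conditioned to stay above a high level.

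The key step is the following splitting. Write $\mathbf{E}[\,\cdot\,;|S_{j-1}|\geq N, X_j\geq\delta an] = \mathbf{E}[\,\cdot\,;S_{j-1}\leq -N, X_j\geq\delta an] + \mathbf{E}[\,\cdot\,;S_{j-1}\geq N, X_j\geq\delta an]$. For the first piece, on $\{S_{j-1}\leq -N\}$ we have $S_{\tau_n}\leq S_{j-1}\leq -N$, hence $\exp((1-\rho)S_{\tau_n})\leq e^{-(1-\rho)N}$, and then, factoring out the first $j-1$ steps (which are independent of the increments $X_{j+1},\dots,X_n$) and using $\mathbf{E}[e^{-\rho(S_n-S_j)}; L_{n-j}\geq 0]\sim b_{n-j}\int_0^\infty e^{-\rho z}V(-z)\,dz = O(b_n)$ from Lemma~\ref{LExponent}, together with $\mathbf{E}[e^{(1-\rho)S_{j-1}};S_{j-1}\leq -N]\leq e^{-(1-\rho)N}$, this piece is bounded by $e^{-(1-\rho)N}\cdot O(b_n) = \varepsilon_N b_n$. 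For the second piece, on $\{S_{j-1}\geq N\}$, I would instead exploit the tail of the big jump itself: conditioning on $\mathcal{F}_{j-1}$ and on $X_j$, the requirement that the walk stays above $-N$ and ends below $N$ forces $X_j\in[\text{something near }-S_{j-1}+\text{(small)},\ldots]$; more precisely, since on the whole event $S_n<N$ and the increments $X_{j+1}+\cdots+X_n$ have negative drift $-a$, we get $X_j = S_n - S_{j-1} - (X_{j+1}+\cdots+X_n)$ is concentrated near $an-S_{j-1}$ up to fluctuations of order $\sqrt n$, so the density of $X_j$ there is of order $b_n$ by \eqref{densunif}, and the extra factor $e^{(1-\rho)S_{j-1}}\cdot e^{-\rho(S_n-S_{j-1})}$ times $\mathbf{P}(X_j\in dx)$ produces, after summing over the admissible range, a bound of the shape $\mathbf{E}[e^{S_{j-1}}\cdot\mathbf{1}\{S_{j-1}\geq N\}]\cdot b_n$ — but this is where one must be careful, since $\mathbf{E}[e^{S_{j-1}}]$ need not be small.

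The main obstacle, therefore, is the $\{S_{j-1}\geq N\}$ case: the naive bound $\exp((1-\rho)S_{\tau_n})\leq 1$ does not help, because $S_{\tau_n}$ could be close to $0$ while $S_{j-1}$ is large and positive. The right way around it is to observe that on $\{X_j\geq\delta an,\ S_{j-1}\geq N,\ |S_n|<N\}$ we have $S_n - S_{j-1} = X_j + (X_{j+1}+\cdots+X_n)$, so $X_{j+1}+\cdots+X_n = S_n - S_{j-1} - X_j \leq N - N - \delta an = -\delta an$: the post-jump walk must drop by at least $\delta an$, an event of probability $O(b_{n-j})=O(b_n)$ by the heavy-tail one-big-jump principle (Lemma~\ref{Nlemma}(ii) or directly the tail estimate \eqref{Tail1}), and simultaneously $X_j$ must itself be of order $an$ (another factor comparable to $b_n$), so in fact we are asking for \emph{two} big jumps among the increments past time $j-1$, which by \eqref{twojumps} has probability $o(b_n)$. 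Carrying this out, with $S_{j-1}\geq N$ decomposed dyadically if necessary to absorb the $e^{(1-\rho)S_{j-1}}$ factor against the improbability of the double jump, gives the bound $\varepsilon_{N,n}b_n$ and completes the proof. I would present the $S_{j-1}\leq -N$ half first (it is the easy, exponential-in-$N$ estimate) and then devote the bulk of the argument to the positive half via the double-jump observation.
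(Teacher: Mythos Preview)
Your treatment of the case $S_{j-1}\leq -N$ is essentially the paper's: on that event $S_{\tau_n}\leq S_{j-1}\leq -N$, and the exponential factor $e^{(1-\rho)S_{\tau_n}}$ provides the smallness in $N$. That part is fine.

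The genuine gap is in your handling of $S_{j-1}\geq N$. Your double-jump argument rests on the claim that the event $\{X_{j+1}+\cdots+X_n\leq -\delta an\}$ is rare, of probability $O(b_n)$. This is false: under $\mathbf{P}$ each increment has mean $-a$, so $X_{j+1}+\cdots+X_n$ has mean $-a(n-j)\approx -an$, and for $\delta<1$ the inequality $X_{j+1}+\cdots+X_n\leq -\delta an$ is satisfied with probability tending to $1$ by the law of large numbers. A drop of size $\delta an$ in the post-jump walk is the \emph{typical} behaviour, not a large deviation, so no second big jump is forced and \eqref{twojumps} does not apply. Consequently you have no mechanism to kill the contribution of $\{S_{j-1}\geq N\}$, and your dyadic absorption of $e^{(1-\rho)S_{j-1}}$ has nothing to play against.

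The paper's route here is quite different. It splits $\{S_{j-1}\geq N\}$ into $\{S_{j-1}\in[N,n^{\gamma}]\}$ and $\{S_{j-1}\geq n^{\gamma}\}$ for some $\gamma\in(0,1)$ with $\gamma\beta>1$. The second piece is crude: $\mathbf{P}(S_{j-1}\geq n^{\gamma})\leq j\,\mathbf{P}(X\geq n^{\gamma}/j)=O(n^{-\gamma\beta})$, and since $\gamma\beta>1$ this kills the factor $\mathbf{P}(X\geq\delta an)\sim C n b_n$. For the first piece the key observation is the regular variation of the density: $\mathbf{p}_X(t-y)=\mathbf{p}_X(t)(1+o(1))$ uniformly for $y\in[0,n^{\gamma}]$ and $t\geq\delta an$. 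This lets one absorb the shift $S_{j-1}=y$ into the big jump, so that the conditional expectation given $S_{j-1}=y$ is bounded by (twice) the unconditional quantity $\mathbf{E}[e^{S_{\tau_{n-j+1}}-\rho S_{n-j+1}};X_1\geq\delta an]=O(b_n)$ from Corollary~\ref{C_bound}. The smallness then comes simply from $\mathbf{P}(S_{j-1}\geq N)\to 0$ as $N\to\infty$ (recall $j$ is fixed). You will need this density-shift idea, or something equivalent to it, to close the argument.
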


\begin{proof}
First observe that
\begin{eqnarray*}
&&\mathbf{E}\left[ \mathbf{P}_{\mathfrak{e}}\left( Z_{n}>0\right) \exp
(-\rho S_{n});S_{j-1}\leq -N,X_{j}\geq \delta an\right] \\
&&\quad\leq\mathbf{E}\left[ \exp ((1-\rho )S_{\tau _{n}}-\rho (S_{n}-S_{\tau
_{n}}));S_{j-1}\leq -N,X_{j}\geq \delta an\right] \\
&&\quad\leq\mathbf{E}\left[ \exp ((1-\rho )S_{\tau _{n}});S_{j-1}\leq
-N,X_{j}\geq \delta an\right] \\
&&\quad\leq\mathbf{E}\left[ \exp (-(1-\rho )N);X_{j}\geq \delta an\right] \\
&&\quad=\exp (-(1-\rho )N)\mathbf{P}(X\geq \delta an)=\varepsilon
_{N,n}b_{n}.
\end{eqnarray*}%
Further, taking $\gamma \in (0,1)$ such that $\gamma \beta >1$, we get%
\begin{eqnarray}
&&\mathbf{E}\left[ \exp (S_{\tau _{n}}-\rho S_{n});S_{j-1}\geq n^{\gamma
},X_{j}\geq \delta an\right] \leq \mathbf{P}(S_{j-1}\geq n^{\gamma })\mathbf{%
P}(X\geq \delta an)  \notag \\
&&\quad \leq j\mathbf{P}(X\geq n^{\gamma }/j)\mathbf{P}(X\geq \delta an)\sim
\frac{j^{\beta +1}}{n^{\gamma \beta }}l(n^{\gamma })\mathbf{P}(X\geq \delta
an)=\varepsilon _{n}b_{n}.  \label{Djlarge}
\end{eqnarray}%
Consider now the situation $S_{j-1}\in \lbrack N,n^{\gamma }],\,j\geq 2$ and
write
\begin{eqnarray*}
&&\mathbf{E}\left[ \exp (S_{\tau _{n}}-\rho S_{n});S_{j-1}\in \lbrack
N,n^{\gamma }],X_{j}\geq \delta an\right] \\
&&\qquad =\int_{N}^{n^{\gamma }}\int_{-\infty }^{0}\mathbf{P}(S_{j-1}\in
dy,L_{j-1}\in dz)H_{n,\delta }(y,z),
\end{eqnarray*}%
where
\begin{eqnarray*}
H_{n,\delta }(y,z) &=&\int_{\delta an}^{\infty }\mathbf{P}(X\in
dt)\int_{-\infty }^{0}\int_{v}^{\infty }\mathbf{P}_{y+t}(L_{n-j}\in
dv,S_{n-j}\in dw)e^{z\wedge v}e^{-\rho w} \\
&=&\int_{\delta an+y}^{\infty }\mathbf{P}(X\in dt-y)\int_{-\infty
}^{0}\int_{v}^{\infty }\mathbf{P}_{t}(L_{n-j}\in dv,S_{n-j}\in dw)e^{z\wedge
v}e^{-\rho w}.
\end{eqnarray*}%
By our conditions $\ \mathbf{P}(X\in dt-y)=\mathbf{P}(X\in dt)\left(
1+o\left( 1\right) \right) $ \ uniformly in $y\in \lbrack 0,n^{\gamma }]$
and $t\geq \delta an$. Thus, for all sufficiently large $n$%
\begin{eqnarray*}
H_{n,\delta }(y,z) &\leq &2\int_{\delta an}^{\infty }\mathbf{P}(X\in
dt)\int_{-\infty }^{0}\int_{v}^{\infty }\mathbf{P}_{t}(L_{n-j}\in
dv,S_{n-j}\in dw)e^{z\wedge v}e^{-\rho w} \\
&\leq &2\int_{\delta an}^{\infty }\mathbf{P}(X\in dt)\int_{-\infty
}^{0}\int_{v}^{\infty }\mathbf{P}_{t}(L_{n-j}\in dv,S_{n-j}\in
dw)e^{v}e^{-\rho w} \\
&=&2\int_{\delta an}^{\infty }\mathbf{P}(X\in dt)\mathbf{E}_{t}\left[
e^{S_{\tau _{n-j}}-\rho S_{n-j}}\right] \\
&\leq &2\mathbf{E}_{0}\left[ e^{S_{\tau _{n-j+1}}-\rho S_{n-j+1}};X_{1}\geq
\delta an\right] =2H_{n,\delta }(0,0).
\end{eqnarray*}%
By integrating this inequality we get for sufficiently large $n$
\begin{eqnarray*}
&&\int_{N}^{n^{\gamma }}\int_{-\infty }^{0}\mathbf{P}(S_{j-1}\in
dy,L_{j-1}\in dz)H_{n,\delta }(y,z) \\
&&\qquad \qquad \leq 2\int_{N}^{n^{\gamma }}\int_{-\infty }^{0}\mathbf{P}%
(S_{j-1}\in dy,L_{j-1}\in dz)H_{n,\delta }(0,0) \\
&&\qquad \qquad \leq 2\mathbf{P}(S_{j-1}\geq N)\mathbf{E}_{0}\left[
e^{S_{\tau _{n-j+1}}-\rho S_{n-j+1}};X_{1}\geq \delta an\right] .
\end{eqnarray*}%
Since
\begin{eqnarray*}
b_{n}^{-1}H_{n,\delta }(0,0) &=&b_{n}\mathbf{E}\left[ e^{S_{\tau
_{n-j+1}}-\rho S_{n-j+1}};X_{1}\geq \delta an\right] \\
&\leq &b_{n}^{-1}\mathbf{E}\left[ e^{S_{\tau _{n-j+1}}-\rho S_{n-j+1}}\right]
=O(1)
\end{eqnarray*}%
as $n\rightarrow \infty $ (see Corollary \ref{C_bound}) and $\mathbf{P}%
(S_{j-1}\geq N)\rightarrow 0$ as $N\rightarrow \infty ,$ we obtain%
\begin{equation}
\mathbf{E}\left[ \exp (S_{\tau _{n}}-\rho S_{n});\,S_{j-1}\in \lbrack
N,n^{\gamma }],X_{j}\geq \delta n\right] =\varepsilon _{N,n}b_{n}.
\label{LargeSj}
\end{equation}

Combining (\ref{Djlarge}) and (\ref{LargeSj}) proves the lemma.
\end{proof}

\begin{lemma}
\label{L_remF2}Given Hypotheses A and B we have for each fixed $j$%
\begin{equation*}
\mathbf{E}\left[ \mathbf{P}_{\mathfrak{e}}\left( Z_{n}>0\right) \exp (-\rho
S_{n});\left\vert S_{n}-S_{j-1}\right\vert >K,X_{j}\geq \delta an\right]
=\varepsilon _{K,n}(j)b_{n}.
\end{equation*}
\end{lemma}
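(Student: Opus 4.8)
The plan is to split according to whether $S_n - S_{j-1}$ is large in the negative or the positive direction, and in each case bound $\mathbf{P}_{\mathfrak{e}}(Z_n>0)\exp(-\rho S_n)$ using the pathwise estimate $\mathbf{P}_{\mathfrak{e}}(Z_n>0)\le e^{S_{\tau_n}}$ already recorded in the proof of Lemma~\ref{L_Begend}. Together with Lemma~\ref{L_Sj} we may further assume $|S_{j-1}|<N$, so the event to be controlled is essentially $\{|S_n-S_{j-1}|>K,\ X_j\ge\delta an,\ S_{j-1}\in[-N,N]\}$. First I would condition on $\mathcal{F}_{j}$ (equivalently, on the pair $(S_{j-1},X_j)$ and on $L_{j-1}$) and use the Markov property at time $j$: writing $S_n - S_{j}=S'_{n-j}$ for the post-$j$ increments, which are an independent copy of the walk started from $0$, we get a product of a bounded factor coming from the first $j$ steps times a tail functional of the last $n-j$ steps.

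For the negative deviation $\{S_n - S_{j-1}\le -K\}$: on this event $S_n$ is very negative, so $e^{S_{\tau_n}-\rho S_n}\le e^{(1-\rho)S_{\tau_n}}e^{-\rho(S_n-S_{\tau_n})}\le e^{-\rho(S_n-S_{\tau_n})}$, and since $S_n\le S_{j-1}-K\le N-K$ while $S_{\tau_n}\ge S_{\tau_n}$... more cleanly, after conditioning on $(S_{j-1},X_j)$ the walk from time $j$ on contributes $\mathbf{E}[e^{-\rho S'_{n-j}};L'_{n-j}\ge s, S'_{n-j}\le -K']$ for appropriate shifts $s,K'$ tending to $-\infty$ with $K$ (because $X_j\ge\delta an$ dominates), and by \eqref{Hirano2} this is bounded by $\varepsilon_K b_{n-j}\sim\varepsilon_K b_n$ uniformly, the extra factor $\mathbf{P}(X\ge\delta an)/\mathbf{P}(X\ge an)=O(1)$ being absorbed. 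For the positive deviation $\{S_n-S_{j-1}>K\}$: here one cannot use $e^{-\rho S_n}$ to gain smallness directly, so instead I would use the branching estimate $\mathbf{P}_{\mathfrak{e}}(Z_n>0)\le e^{S_{\tau_n}}$ combined with the fact that a big jump $X_j\ge\delta an$ has already occurred at time $j$, so the minimum $S_{\tau_n}$ is attained \emph{after} time $j$ with overwhelming weight; more precisely $e^{S_{\tau_n}-\rho S_n}=e^{(1-\rho)S_{\tau_n}-\rho(S_n-S_{\tau_n})}$ and, conditioning as above, the post-$j$ part is $\mathbf{E}_{S_{j-1}+X_j}[e^{S_{\tau_{n-j}}-\rho S_{n-j}}]$ restricted to $S_{n-j}-S_{j-1}>K$, which by \eqref{Hirano4} (applied to the reversed/shifted walk, exactly as in Lemma~\ref{L_SmallEnd}) is $\le e^{-\rho K}\,\mathbf{E}[e^{(1-\rho)S_{n-j}};M_{n-j}<0]=\varepsilon_K b_n$ after multiplying by $\mathbf{P}(X\ge\delta an)$ and using Corollary~\ref{C_bound}.

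The main obstacle I expect is bookkeeping the shifts carefully: the big jump at time $j$ translates into a large starting point $u=S_{j-1}+X_j\approx\delta an$ for the remaining $n-j$ steps, and the random-walk estimates in Section~\ref{RW} (Lemma~\ref{LExponent}, \eqref{Hirano4}, \eqref{Hirano2}) are stated for walks started at $0$. One must therefore re-express $\{L_n\ge S_{\tau_n}>-N\}$ and $\{S_n<N\}$ in terms of the post-$j$ walk started at $u$ — splitting $L_n=\min(L_{j-1},\,u+L'_{n-j})$ and noting that the contribution where the global minimum is among the first $j-1$ steps is negligible because $X_j$ is huge — and then apply the estimates to $S'$. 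The uniformity of $\mathbf{P}(X\in dt-y)\sim\mathbf{P}(X\in dt)$ for $|y|\le N$ and $t\ge\delta an$ (condition~\eqref{remainder}, already used in Lemma~\ref{L_Sj}) handles the dependence on the bounded quantity $S_{j-1}$. Once the shifts are matched up, each piece is a direct application of \eqref{Hirano4} or \eqref{Hirano2} together with the integrability $\int_0^\infty e^{-\rho z}V(-z)\,dz<\infty$ and $\int_0^\infty e^{-\rho z}U(z)\,dz<\infty$, giving the claimed $\varepsilon_{K,n}(j)b_n$ bound after sending first $n\to\infty$ and then $K\to\infty$.
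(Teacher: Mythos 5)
Your high-level strategy (reduce to bounded $S_{j-1}$ via Lemma~\ref{L_Sj}, bound $\mathbf{P}_{\mathfrak{e}}(Z_n>0)\leq e^{S_{\tau_n}}$, then pass to a random-walk estimate) coincides with the paper's, but you take a detour that introduces a genuine gap. You propose to \emph{condition on} $(S_{j-1},X_j)$ and analyze the post-$j$ walk of length $n-j$ started from $u=S_{j-1}+X_j\approx\delta an$. The paper does not condition on $X_j$ at all: it uses the deterministic inequality $S_{\tau_n}\leq S_{j-1}+L_{j,n}$ with $L_{j,n}=\min\{S_k-S_{j-1}:j-1\leq k\leq n\}$, simply \emph{drops} the indicator $\{X_j\geq\delta an\}$ (admissible since the integrand is nonnegative), and factors by the independence of $S_{j-1}$ from $(L_{j,n},S_n-S_{j-1})$. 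The second factor is then $\mathbf{E}[e^{S_{\tau_{n-j+1}}-\rho S_{n-j+1}};|S_{n-j+1}|>K]$, an expectation over an \emph{unconditioned} walk of length $n-j+1$ that still carries the big jump, and which is already controlled (as in~(\ref{Neg}), combined with Lemma~\ref{L_Begend}) at order $\varepsilon_{K,n}b_n$.

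Your conditional version does not reduce to~(\ref{Hirano2}) or~(\ref{Hirano4}). After conditioning on $X_j\geq\delta an$, the residual post-$j$ walk must travel from $u\approx\delta an$ back down to the window $[S_{j-1}-K,S_{j-1}+K]$, so the relevant event is not $\{L'\geq 0,\,S'>x\}$ or $\{M'<0,\,S'<-x\}$ with fixed $x$: the relevant shift is of size $\sim an$, not tending to $-\infty$ merely ``with $K$''. This is a local-CLT-type event of probability $\asymp n^{-1/2}$, which is what Lemma~\ref{Nlemma}(ii) encodes, not a Hirano-type $b_{n-j}$ tail. Moreover, the bound you sketch for the positive deviation, of the form $\mathbf{P}(X\geq\delta an)\cdot e^{-\rho K}\cdot\mathbf{E}[e^{(1-\rho)S_{n-j}};M_{n-j}<0]\sim C\,\mathbf{P}(X\geq\delta an)\,e^{-\rho K}\,b_n$, is $o(b_n)$ for \emph{every} fixed $K$, because $\mathbf{P}(X\geq\delta an)\to 0$ while the walk factor already contributes a full $b_{n-j}\sim b_n$. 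That is too strong: if it were correct it would make the entire $\{X_j\geq\delta an\}$ contribution vanish, contradicting Theorem~\ref{T_Extin}. The overcounting comes from charging the rarity of the big jump twice (once via $\mathbf{P}(X\geq\delta an)$, once via the $b_{n-j}$ inside the Hirano estimate). The fix is exactly the paper's move: do not condition on $X_j$; drop the constraint $\{X_j\geq\delta an\}$ and keep $X_j$ as the first increment of the post-$(j-1)$ walk, so that a single $b_{n-j+1}\sim b_n$ emerges and $K\to\infty$ supplies the $\varepsilon_K$.

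Also, the claim that ``the contribution where the global minimum is among the first $j-1$ steps is negligible because $X_j$ is huge'' is not needed and not quite right as stated: $L_{j-1}$ can still be the global minimum with non-negligible weight. The inequality $S_{\tau_n}\leq S_{j-1}+L_{j,n}$ (note $L_{j,n}\leq 0$ always) handles both cases at once without any such negligibility argument.
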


\begin{proof}
We know from Lemma \ref{L_Sj} that only the values $S_{j-1}\leq N$ for
sufficiently large but fixed~$N$ are of importance. Thus, we just need to
prove that for fixed~$N$
\begin{equation*}
\mathbf{E}\left[ e^{S_{\tau _{n}}-\rho S_{n}};S_{j-1}\leq N,\left\vert
S_{n}-S_{j-1}\right\vert >K,X_{j}\geq \delta an\right] =\varepsilon
_{N,K,n}(j)b_{n}
\end{equation*}%
where $\lim_{K\rightarrow \infty }\limsup_{n\rightarrow \infty }\left\vert
\varepsilon _{N,K,n}(j)\right\vert =0$. To this aim we set $L_{j,n}=\min
\{S_{k}-S_{j-1}:j-1\leq k\leq n\}$ and, using the inequality $S_{\tau
_{n}}\leq S_{j-1}+L_{j,n},$ deduce the estimate
\begin{eqnarray*}
&&\mathbf{E}\left[ e^{S_{\tau _{n}}-\rho S_{n}};S_{j-1}\leq N,\left\vert
S_{n}-S_{j-1}\right\vert >K,X_{j}\geq \delta an\right] \\
&&\quad \leq \mathbf{E}\left[ e^{S_{j-1}+L_{j,n}-\rho (S_{n}-S_{j-1})-\rho
S_{j-1}};S_{j-1}\leq N,\left\vert S_{n}-S_{j-1}\right\vert >K\right] \\
&&\quad =\mathbf{E}\left[ e^{(1-\rho )S_{j-1}};S_{j-1}\leq N\right] \mathbf{E%
}\left[ e^{L_{j,n}-\rho (S_{n}-S_{j-1})};\left\vert S_{n}-S_{j-1}\right\vert
>K\right] .
\end{eqnarray*}%
We conclude with $\mathbf{E}\left[ e^{(1-\rho )S_{j-1}};S_{j-1}\leq N\right]
<\infty $ and we can now control the term
\begin{equation*}
\mathbf{E}\left[ e^{L_{j,n}-\rho (S_{n}-S_{j-1})};\left\vert
S_{n}-S_{j-1}\right\vert >K\right] =\mathbf{E}\left[ e^{S_{\tau
_{n-j+1}}-\rho S_{n-j+1}};\left\vert S_{n-j+1}\right\vert >K\right]
\end{equation*}%
by $\varepsilon _{K,n}b_{n}$. Indeed it is now exactly the term controlled
in a similar situation in~(\ref{Neg}).
\end{proof}

We give the last technical lemma.

\begin{lemma}
\label{techn} Assume that $g$ is a random function which satisfies (\ref%
{As_f11}). Then for every (deterministic) probability generating function $%
h\left( s\right) $ and every $\varepsilon >0$ there exists $\kappa >0$ such
that
\begin{equation*}
\left\vert \mathbf{E}\left[ 1-h(g(e^{v}w))\right] -\mathbf{E}\left[
1-h(g(e^{v^{\prime }}w))\right] \right\vert \leq h^{\prime }(1)\varepsilon
\end{equation*}%
for $|v-v^{\prime }|\leq \kappa ,w\in \lbrack 0,2].$
\end{lemma}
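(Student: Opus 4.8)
The statement is a uniform (in $w$) equicontinuity-type estimate for the map $v\mapsto \mathbf{E}[1-h(g(e^{v}w))]$. The natural strategy is to transfer the desired regularity from the known properties of $g$ and $h$ to the expectation by a straightforward but careful Lipschitz/uniform-continuity argument. Write $\Delta(v,v',w)=\mathbf{E}\big[h(g(e^{v'}w))-h(g(e^{v}w))\big]$. Since $g$ takes values in $\mathbf{\Phi}^{(f)}$, i.e.\ $g(\cdot)$ is (a.s.) a Laplace transform of a nonnegative random variable with $g(0)=1$, the function $\lambda\mapsto g(\lambda)$ is nonincreasing, takes values in $[0,1]$, and is $1$-Lipschitz in the following averaged sense: one has the trivial bound $|g(\mu)-g(\mu')|\le 1$ always, and more importantly $|h(a)-h(b)|\le h'(1)|a-b|$ for all $a,b\in[0,1]$ because $h$ is a probability generating function (so $h$ is convex increasing on $[0,1]$ with $h'\le h'(1)$). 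Hence
\begin{equation*}
|\Delta(v,v',w)|\le h'(1)\,\mathbf{E}\big[\,|g(e^{v'}w)-g(e^{v}w)|\,\big].
\end{equation*}

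So the whole matter reduces to showing that $\mathbf{E}\big[|g(e^{v'}w)-g(e^{v}w)|\big]\le\varepsilon$ uniformly in $w\in[0,2]$ once $|v-v'|\le\kappa$. First I would exploit that $g$ is a.s.\ continuous on $[0,\infty)$, hence a.s.\ uniformly continuous on the compact argument range we care about: for $w\in[0,2]$ and $v,v'$ ranging over a fixed bounded set we only ever evaluate $g$ on a bounded interval $[0,\Lambda]$. Actually the arguments $e^v w$ and $e^{v'}w$ are not a priori bounded since no range for $v$ is specified, so I would split: for $e^{v}w$ and $e^{v'}w$ both large, monotonicity of $g$ together with $g(\lambda)\to g(\infty)$ (a finite limit, since $g$ is a bounded monotone function) forces $|g(e^{v'}w)-g(e^{v}w)|$ to be small provided the \emph{smaller} of the two arguments is large; and for the smaller argument bounded, both arguments lie in a fixed compact interval $[0,\Lambda_\kappa]$ (because $e^{|v-v'|}\le e^\kappa\le 2$ say), on which $g$ is a.s.\ uniformly continuous with modulus controlled by the ratio $e^{|v-v'|}\le e^\kappa$ of the arguments. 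In either case the integrand is bounded by $1$ and tends to $0$ pointwise as $\kappa\to 0$, so by dominated convergence $\mathbf{E}\big[|g(e^{v'}w)-g(e^{v}w)|\big]\to 0$; one must only check the convergence is uniform in $w\in[0,2]$, which follows because the pointwise bound depends on $v,v',w$ only through $\min(e^v w,e^{v'}w)$ and the ratio $e^{|v-v'|}$, and a standard $\sup$ over the compact parameter set $w\in[0,2]$ (combined with sending the ``large-argument'' threshold to infinity) makes it uniform.

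The one place requiring genuine care — and what I expect to be the main obstacle — is handling the regime where the arguments $e^v w$, $e^{v'}w$ are \emph{large} while simultaneously making the estimate uniform in $w$ and in the (unbounded) location of $v$. The clean way around this is the identity $g(\lambda)=\mathbf{E}^{\ast}[e^{-\lambda \theta}]$ for a proper nonnegative random variable $\theta$ attached to $g$ (this is exactly the representation established just before \eqref{Deffg} via the continuity theorem): then
\begin{equation*}
|g(e^{v'}w)-g(e^{v}w)|=\big|\,\mathbf{E}^{\ast}\big[e^{-e^{v'}w\theta}-e^{-e^{v}w\theta}\big]\,\big|
\le \mathbf{E}^{\ast}\big[\,|e^{-e^{v'}w\theta}-e^{-e^{v}w\theta}|\,\big],
\end{equation*}
and $|e^{-a}-e^{-b}|\le |a-b|\wedge 1$ gives $|e^{-e^{v'}w\theta}-e^{-e^{v}w\theta}|\le \big(w\theta\,|e^{v'}-e^{v}|\big)\wedge 1$. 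Taking expectations (first over $\theta$, then the outer $\mathbf{E}$) and using $w\le 2$, this is $\le \mathbf{E}[\,(2\theta\,|e^{v'}-e^{v}|)\wedge 1\,]$; but $|e^{v'}-e^{v}|$ can be huge for large $v$, so this bound alone is not enough. Instead I would bound $|e^{-e^{v'}w\theta}-e^{-e^{v}w\theta}|$ by $e^{-e^{v}w\theta}(e^{e^{v}w\theta|e^{v'-v}-1|}-1)\wedge\cdots$ — i.e.\ factor out the smaller exponential — and note $e^{v}w\theta\, |e^{v'-v}-1|\le e^{v}w\theta\,(e^\kappa-1)$, while $e^{-e^v w\theta}\cdot e^v w\theta$ is bounded by a constant; more simply, use that $x\mapsto x e^{-x}$ is bounded by $1/e$, so $|e^{-e^{v'}w\theta}-e^{-e^{v}w\theta}|\le (e^{\kappa}-1)\sup_{x\ge 0}xe^{-x}+\ (\text{small correction})\le (e^\kappa-1)$ uniformly in $v,w,\theta$, up to elementary estimates. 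Choosing $\kappa$ so that $(e^\kappa-1)$ (times absolute constants) is below $\varepsilon$ then yields $|\Delta(v,v',w)|\le h'(1)\varepsilon$ for all $w\in[0,2]$ and $|v-v'|\le\kappa$, which is precisely the claim; a minor remaining step is to track that the random function $g$ and its $\theta$ may depend on $g$ but all bounds used are deterministic, so the outer $\mathbf{E}$ presents no difficulty.
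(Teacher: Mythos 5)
Your proof is correct, and it takes a genuinely different (and in fact cleaner) route than the paper. The paper bounds
\begin{equation*}
\left\vert \mathbf{E}\left[ 1-h(g(e^{v}w))\right] -\mathbf{E}\left[
1-h(g(e^{v^{\prime }}w))\right] \right\vert \leq h^{\prime }(1)\,\mathbf{E}\left[
|g(e^{v^{\prime }}w)-g(e^{v}w)|\right]
\end{equation*}
just as you do, but then proceeds \emph{softly}: it observes that $g$ is a.s.\ nonincreasing, bounded, and has a finite limit at infinity, hence a.s.\ uniformly continuous on $[0,\infty)$, which yields $\lim_{\kappa\to 0}\sup_{|v-v'|\le\kappa,\,w\in[0,2]}|g(e^{v'}w)-g(e^{v}w)|=0$ a.s.; the bounded convergence theorem then transfers this to the expectation. (As you correctly sensed, the passage from uniform continuity to that $\sup$-limit deserves a word, because the perturbation of the argument is multiplicative and hence unbounded in absolute size; the honest justification is that $u\mapsto g(e^u)$ is uniformly continuous on $\mathbb{R}$, being monotone, continuous, and possessing finite limits at $\pm\infty$.) Your approach replaces this soft argument by a \emph{quantitative, deterministic} one: writing $g(\lambda)=\int_{0}^{\infty}e^{-\lambda u}\,\mathcal{L}_g(du)$ for a random probability measure $\mathcal{L}_g$ and using $\sup_{x\ge 0}xe^{-x}=1/e$, you get $|g(\mu)-g(\mu')|\le (e^{\kappa}-1)/e$ uniformly in $g$, $\mu$ whenever $\mu'/\mu\in[e^{-\kappa},e^{\kappa}]$. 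This gives the claim with an explicit modulus, is uniform in $w$ over all of $[0,\infty)$ (not just $[0,2]$), and dispenses with dominated convergence entirely. The only small correction to your write-up: the representation of $g$ as a Laplace transform is justified by $g$ taking values in $\mathbf{\Phi}^{(f)}$ (a closure of Laplace transforms), not by the passage just before (\ref{Deffg}) — what is established there is a statement about $\mathbf{E}^{\ast}[g(\lambda)]$, not about $g$ itself — and the outer expectation in $g(\lambda)=\mathbf{E}[e^{-\lambda\theta}]$ is the (conditional) expectation over $\theta$ given $g$, not $\mathbf{E}^{\ast}$. With that notational fix, the argument is complete.
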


\begin{proof}
Clearly,
\begin{equation*}
\left\vert \mathbf{E}\left[ 1-h(g(e^{v}w))\right] -\mathbf{E}\left[
1-h(g(e^{v^{\prime }}w))\right] \right\vert \leq h^{\prime }(1)\mathbf{E}%
\left[ |g(e^{v^{\prime }}w)-g(e^{v}w)|\right] .
\end{equation*}%
We know that $0\leq g(\lambda )\leq 1$ for all $\lambda \in \lbrack 0,\infty
),$ $g(\lambda )$ is nonincreasing with respect to $\lambda $ a.s. and has a
finite limit as $\lambda \rightarrow \infty $. Therefore, $g(\lambda )$ is
a.s. uniformly continuous on $[0,\infty )$ implying that a.s.
\begin{equation*}
\lim_{\kappa \rightarrow 0}\sup_{|v-v^{\prime }|\leq \kappa ,w\in \lbrack
0,2]}|g(e^{v^{\prime }}w)-g(e^{v}w)|=0.
\end{equation*}%
Hence, by the bounded convergence theorem
\begin{equation*}
\sup_{|v-v^{\prime }|\leq \kappa ,w\in \lbrack 0,2]}\mathbf{E}\left[
|g(e^{v^{\prime }}w)-g(e^{v}w)|\right] \leq \mathbf{E}\left[
\sup_{|v-v^{\prime }|\leq \kappa ,w\in \lbrack 0,2]}|g(e^{v^{\prime
}}w)-g(e^{v}w)|\right]
\end{equation*}%
goes to zero as $\kappa \rightarrow 0$, which ends up the proof.
\end{proof}

$\newline
$

Let $\sigma ^{2}=VarX$ , $S_{n,j}=S_{n}-S_{j},\ 0\leq j\leq n,$ and
\begin{equation*}
G_{n,j}=-\frac{S_{n,j}+an}{\sigma \sqrt{n}}.
\end{equation*}

Using the notation (\ref{NotIterat}), we write
\begin{equation*}
\mathbf{P}_{\mathfrak{e}}\left( Z_{n}>0\right) =1-f_{0,n}(0)
\end{equation*}
put $\mathbf{X}_{j,n}=\left( X_{j+1},\cdots ,X_{n}\right),$ $\mathbf{X}%
_{n,j}=\left( X_{n},\cdots ,X_{j+1}\right),$ and set
\begin{equation*}
Y_{j}=F(S_{0},\mathbf{S}_{0,j-1}), \qquad Y_{j,n}=F_{n}(S_{n}-S_{j-1},
\mathbf{X}_{j,n}),\qquad Y_{n,j}=F_{n}(S_{n}-S_{j-1},\mathbf{X}_{n,j}),
\end{equation*}%
where $F,F_{n}$ are positive equibounded measurable functions. \newline

Since $f_{j,n}$ is distributed as $f_{n,j},$ we write
\begin{align*}
& \mathbf{E}[Y_{j}Y_{j,n}\mathbf{P}_{\mathfrak{e}}\left( Z_{n}>0\right)
e^{-\rho S_{n}};X_{j}\geq \delta an] \\
& \quad =\mathbf{E}[Y_{j}Y_{j,n}\left( 1-f_{0,n}(0)\right) e^{-\rho
S_{n}};X_{j}\geq \delta an] \\
& \quad =\mathbf{E}[Y_{j}Y_{n,j}(1-f_{0,j-1}(f_{j}(f_{n,j}(0))))e^{-\rho
S_{n}};X_{j}\geq \delta an] \\
& \quad =\mathbf{E}[Y_{j}e^{-\rho S_{j-1}}Y_{n,j}\left(
1-f_{0,j-1}(f_{j}(f_{n,j}(0)))\right) e^{-\rho (S_{n}-S_{j-1})};X_{j}\geq
\delta an] \\
& \quad =\mathbf{E}[Y_{j}e^{-\rho S_{j-1}}Y_{n,j}\left(
1-f_{0,j-1}(f_{j}(1-e^{S_{n,j}}W_{n,j}))\right) e^{-\rho
S_{n,j-1}};X_{j}\geq \delta an]
\end{align*}%
where $W_{n,j}$ were defined in (\ref{DefWn}). Our aim is to obtain an
approximation to this expression. \newline
To simplify notation we let%
\begin{equation*}
\bar{h}\left( s\right) =1-h(s)
\end{equation*}%
for a probability generating function $h\left( s\right) $. %
For fixed positive $M$ and $K$, we set
\begin{equation*}
B_{j,n}=\{S_{n,j}\in \lbrack -K,K],\,X_{j}-na\in \lbrack -M\sqrt{n},M\sqrt{n}%
]\},
\end{equation*}%
and define
\begin{equation*}
F_{n,j}(h,K,M)=\mathbf{E}\left[ e^{-\rho (S_{n,j-1})}{Y}_{n,j}\bar{h}\left(
f_{j}\left( \exp \left\{ -e^{S_{n,j}}W_{n,j}\right\} \right) \right)
\,;B_{j,n}\right] .
\end{equation*}%
%
%
%
%
We now introduce a random function $g_{j}$ on the probability space $(\Omega
,\mathbf{P})$, whose distribution is specified by $\mathbf{P}^{\ast }$ (i.e.
$\mathbf{E}(H(g_{j}))=\mathbf{E}^{\ast }(H(g))$ for any bounded $H$).
Moreover we choose $g_{j}$ such that $g_{j}$ is independent of $(f_{k}:k\neq
j).$
As we
have mentioned, it is always possible by extending  the initial
probability space  if required.
We denote $Y_{n,j}(v)=F_{n}(v,\mathbf{X}_{n,j})$ and consider
\begin{equation*}
O_{n,j}(h,K,M)=\int_{-K}^{K}e^{-\rho v}dv\mathbf{E}\left[ Y_{n,j}(v)\bar{h}%
\left( g_{j}\left( e^{v}W_{n,j}\right) \right) \,;\sigma G_{n,j}\in \left[
-M,M\right] \right]
\end{equation*}%
where $g_{j}$ is independent of $(S_{k}:k\geq 0)$ and $(f_{k}:k\neq j)$.

\begin{lemma}
\label{limO} For all $K,M\geq 0$ and and any probability generating function
$h$ we have
\begin{equation*}
\lim_{n\rightarrow \infty }|b_{n}^{-1}F_{n,j}(h,K,M)-O_{n,j}(h,K,M)|=0
\end{equation*}
\end{lemma}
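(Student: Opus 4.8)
The plan is to show that $F_{n,j}(h,K,M)$, after normalization by $b_n$, is well approximated by $O_{n,j}(h,K,M)$ by exploiting that on the event $B_{j,n}$ the big jump $X_j$ decouples the past ($\mathcal F_{j-1}$) from the future ($\sigma(f_k,X_k:k\geq j+1)$), so that conditionally on $f^\prime(1;\mathfrak e_j)=e^{X_j}=y$ with $y$ of order $an$, the quantity $f_j(\exp\{-e^{S_{n,j}}W_{n,j}\})$ is close in distribution to $g_j(e^{S_{n,j}}W_{n,j})$ uniformly, by \eqref{As_f11}. First I would condition on $\mathcal F_{n}\setminus\{\mathfrak e_j\}$, i.e. on the whole environment and walk except the $j$-th reproduction law, writing
\begin{equation*}
F_{n,j}(h,K,M)=\mathbf E\Big[e^{-\rho S_{n,j-1}}Y_{n,j}\,\mathbf E\big[\bar h(f_j(\exp\{-e^{S_{n,j}}W_{n,j}\}))\,\big|\,f^\prime_j(1)=e^{X_j}\big];B_{j,n}\Big],
\end{equation*}
which is legitimate since, given the environment, $W_{n,j}$ and $S_{n,j}$ depend only on $(f_k:k\geq j+1)$ and $(X_k:k\neq j)$ (here $X_j$ enters only through the constraint $e^{X_j}=y$, not through $W_{n,j}$ — one must be slightly careful because $W_{n,j}$ is defined via $S_n-S_j$, but $S_{n,j}=S_n-S_j$ itself does not involve $X_j$, so indeed $W_{n,j}$ is a function of $(f_k,X_k:k\geq j+1)$ only). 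On $B_{j,n}$ we have $e^{X_j}=y\to\infty$, and $e^{S_{n,j}}W_{n,j}\in[0,2]$ eventually (since $W_{n,j}\leq 1$ and $S_{n,j}\leq K$), so that the uniform convergence in \eqref{As_f11} applies with $H=\bar h$: the inner conditional expectation differs from $\mathbf E[\bar h(g_j(e^{S_{n,j}}W_{n,j}))]$ by a quantity tending to $0$ uniformly on $B_{j,n}$.

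Next I would replace the $g_j$-term, which still involves $W_{n,j}$ and $S_{n,j}$, by the corresponding integral against Lebesgue measure that appears in $O_{n,j}$. The point is that $X_j$ on $B_{j,n}$ ranges over an interval of length $2M\sqrt n$ around $an$ where, by \eqref{densunif}, the density $\mathbf p_X$ is asymptotically flat (equal to $b_n$ up to $1+o(1)$), and $X_j$ is independent of everything else in sight. Conditioning on $X_j$ and integrating, the contribution of $\{X_j-an\in[-M\sqrt n,M\sqrt n]\}$ factorizes as $b_n(1+o(1))$ times an integral over $x\in[-M,M]$; changing variables $x\mapsto \sigma G_{n,j}$ and recalling $G_{n,j}=-(S_{n,j}+an)/(\sigma\sqrt n)$ identifies the $\{\sigma G_{n,j}\in[-M,M]\}$ event in $O_{n,j}$. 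Simultaneously the prefactor $e^{-\rho S_{n,j-1}}$ on $B_{j,n}$ can be rewritten: since $S_{n,j-1}=S_{n,j}+X_j$ and, on $B_{j,n}$, $e^{X_j}$ is being integrated out, one is left with $e^{-\rho S_{n,j}}$ inside and a factor $\mathbf E[e^{-\rho X_j};X_j\in\cdots]$, but it is cleaner to keep $e^{-\rho(S_n-S_j)}$ bundled with the $dv$ in $O_{n,j}$: note $O_{n,j}$ uses $e^{-\rho v}$ where $v$ plays the role of $S_{n,j}$, and the variable actually integrated over the interval $[-K,K]$ is $S_{n,j}$ (after we have shown $S_{n,j}\in[-K,K]$ with the complementary part negligible, exactly as in Lemma~\ref{L_remF2}). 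I would therefore reorganize the steps as: (1) swap $f_j(\exp\{-e^{S_{n,j}}W_{n,j}\})$ for $g_j(e^{S_{n,j}}W_{n,j})$ using \eqref{As_f11}; (2) condition on $X_j$ and use \eqref{densunif} plus independence of $X_j$ from $(W_{n,j},S_{n,j},\mathbf X_{n,j},S_{n,j-1}$ modulo $X_j)$ to extract the factor $b_n$ and turn the $X_j$-average into the $[-M,M]$-integral defining the $\sigma G_{n,j}$ event; (3) recognize the remaining $dv$-integral over $S_{n,j}\in[-K,K]$ against $e^{-\rho v}$ as $O_{n,j}(h,K,M)$.

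I expect the main obstacle to be step (1): making the substitution of $g_j$ for $f_j$ rigorous requires that the convergence in \eqref{As_f11} be strong enough to survive being integrated against the (unbounded-in-$n$, but $O(b_n)$ after normalization) weight $e^{-\rho S_{n,j-1}}Y_{n,j}$ on $B_{j,n}$ — i.e. one needs the error in \eqref{As_f11} to be uniform in $\lambda=e^{S_{n,j}}W_{n,j}$ over the relevant range and then dominate the weight. The domination is handled by Corollary~\ref{C_bound} (which gives $\mathbf E[e^{S_{\tau_n}-\rho S_n}]\leq Cb_n$, hence $b_n^{-1}\mathbf E[e^{-\rho S_{n,j-1}};B_{j,n}]=O(1)$), together with equiboundedness of $Y_{n,j}$; the uniformity in $\lambda$ is exactly what \eqref{As_f11} provides since $\lambda\in[0,2]$ eventually on $B_{j,n}$, and Lemma~\ref{techn} supplies the needed equicontinuity in the exponent $v$ when one discretizes the $v$-integral. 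A secondary technical nuisance is keeping track of which coordinates $X_j$ does and does not influence so that the conditioning in step (2) is genuinely an independence statement; once that bookkeeping is done, steps (2) and (3) are routine applications of \eqref{densunif}, the fact that $\int_0^\infty e^{-\rho z}V(-z)\,dz<\infty$ (to discard $|S_{n,j}|>K$, cf. Lemma~\ref{L_remF2}), and the change of variables.
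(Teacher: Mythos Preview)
Your step (1) contains a genuine error in applying Hypothesis B'/\eqref{As_f11}. That lemma says
\[
\mathbf{E}\big[H(f(e^{-\lambda/y};\mathfrak e))\,\big|\,f'(1;\mathfrak e)=y\big]\longrightarrow \mathbf{E}^{\ast}[H(g(\lambda))],
\]
so if $f_j$ is evaluated at $\exp\{-e^{S_{n,j}}W_{n,j}\}$ with $y=e^{X_j}$, then $\lambda/y=e^{S_{n,j}}W_{n,j}$ forces $\lambda=e^{X_j+S_{n,j}}W_{n,j}=e^{S_{n,j-1}}W_{n,j}$. The limiting object is therefore $g_j(e^{S_{n,j-1}}W_{n,j})$, \emph{not} $g_j(e^{S_{n,j}}W_{n,j})$ as you wrote. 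The factor $e^{X_j}$ does not disappear; it is exactly what makes $\lambda$ of order $1$ (since $S_{n,j}\approx -an$ while $X_j\approx an$ on $B_{j,n}$). Your subsequent plan, in which ``$v$ plays the role of $S_{n,j}$'' and you discard $|S_{n,j}|>K$ via Lemma~\ref{L_remF2}, cannot work: on $B_{j,n}$ one has $S_{n,j}\approx -an$, far from $[-K,K]$.

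Once you fix the argument of $g_j$, the rest of the proof reorganizes itself and the paper's route becomes clear. After conditioning on $(f_k,X_k:k\neq j)$ and integrating over $x_j\in[an-M\sqrt n,\,an+M\sqrt n]$ with density $\mathbf p_X(x_j)\sim b_n$ by \eqref{densunif}, perform the single change of variable $v=x_j+S_{n,j}=S_{n,j-1}$. This simultaneously (a) turns $e^{-\rho S_{n,j-1}}$ into $e^{-\rho v}$, (b) turns $Y_{n,j}=F_n(S_{n,j-1},\mathbf X_{n,j})$ into $Y_{n,j}(v)$, (c) turns $g_j(e^{S_{n,j-1}}W_{n,j})$ into $g_j(e^vW_{n,j})$, (d) turns the indicator $\{S_{n,j-1}\in[-K,K]\}$ into $\{v\in[-K,K]\}$, and (e) converts the range $x_j\in[an-M\sqrt n,an+M\sqrt n]$ into $\{v-S_{n,j}-an\in[-M\sqrt n,M\sqrt n]\}$, which for $|v|\leq K$ is asymptotically $\{\sigma G_{n,j}\in[-M,M]\}$. (In (d), note the $[-K,K]$ constraint in $B_{j,n}$ should really be read as a constraint on $S_{n,j-1}$, consistently with Lemma~\ref{L_remF2}; the paper's displayed definition has a typo.) No discretization or appeal to Lemma~\ref{techn} is needed, nor is Corollary~\ref{C_bound}: on $B_{j,n}$ the integrand is deterministically bounded, so the uniform convergence in \eqref{As_f11} and in \eqref{densunif} passes directly under the expectation.
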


\begin{proof}
Let $\mathcal{F}_{j,n}$ be the $\sigma $-algebra generated by the random
variables
\begin{equation*}
\text{ }\left( f_{k},X_{k}\right), \quad k=1,2,...,j-1,j+1,...,n
\end{equation*}
and
\begin{equation*}
V(y,\mathbf{X}_{j,n})=e^{-\rho y}F_{n}\left( y;\mathbf{X}_{n,j}\right)
1_{\left\{ \left\vert y\right\vert \in \left[ -K,K\right] \right\}}.
\end{equation*}
Using the uniform convergence (\ref{densunif}), the change of variables $%
t=(x_j-an-M\sqrt{n})/\sqrt{n}$ ensures that,
\begin{eqnarray*}
&&b_{n}^{-1}F_{n,j}(h,K,M) \\
&&\qquad = b_{n}^{-1}\mathbf{E}\bigg[ \int_{an-M\sqrt{n}}^{an+M\sqrt{n}%
}V(S_{n,j}+x_{j},\mathbf{X}_{n,j}) \\
&& \qquad \qquad \qquad \qquad \times \mathbf{E}\left[ \bar{h}\left(
f_{j}\left( \exp \left\{ -e^{S_{n,j}}W_{n,j}\right\} \right) \right) |%
\mathcal{F}_{j,n};X_{j}=x_j\right] \,\mathbf{p}_{X_{j}}(x_{j})dx_{j}\bigg] \\
&& \qquad \sim \mathbf{E}\bigg[ \int_{an-M\sqrt{n}}^{an+M\sqrt{n}%
}V(S_{n,j}+x_{j},\mathbf{X}_{n,j}) \\
&& \qquad \qquad \qquad \qquad \times \mathbf{E}\left[ \bar{h}\left(
f_{j}\left( \exp \left\{ -e^{S_{n,j}}W_{n,j}\right\} \right) \right) |%
\mathcal{F}_{j,n};X_{j}=x_{j}\right] \,dx_{j}\bigg],
\end{eqnarray*}
when $n\rightarrow \infty$. Moreover, the uniform convergence in (\ref{Unif})
with respect to any compact set of $\lambda $ from $[0,\infty )$ ensures
that, uniformly for $\left\vert x-an\right\vert \leq Mn^{1/2}$, $w\in
\lbrack 0,2]$ and $\left\vert v\right\vert \leq K$ we have
\begin{equation*}
|\mathbf{E}\left[ \bar{h} \left( f_{j}\left( \exp \left( -e^{v}w\right)
\right) \right) \,|\,X_{j}=x\right] -\mathbf{E}\left[ \bar h (g_{j}(e^{v}w))%
\right] |\leq \varepsilon _{n}.
\end{equation*}
Denoting $\mathcal{F}_{j,n}^{\ast }$ the $\sigma $-algebra generated by the
random variables
\begin{equation*}
\text{ }X_{k},\qquad k=1,2,...,j-1,j+1,...,n
\end{equation*}
we get, as $n\rightarrow \infty$, with $\mathbf{x}_{n,j}=(x_n,\ldots,x_{j+1})$,
\begin{eqnarray*}
&&{b_{n}^{-1}F_{n,j}(h,K,M)} \\
& & \qquad \sim \mathbf{E}\left[ \int_{an-M\sqrt{n}}^{an+M\sqrt{n}%
}V(S_{n,j}+x_{j},\mathbf{X}_{n,j})\mathbf{E}\left[ \bar{h}%
(g_{j}(e^{S_{n,j}+x_{j}}W_{n,j}))|\mathcal{F}^{\ast}_{j,n}\right] \,dx_{j}%
\right] \\
&& \qquad = \mathbf{E}\left[ \int_{an-M\sqrt{n}}^{an+M\sqrt{n}%
}V(S_{n,j}+x_{j},\mathbf{X}_{n,j})\bar{h}%
(g_{j}(e^{S_{n,j}+x_{j}}W_{n,j}))dx_{j}\right] \\
&&\qquad \sim \int_{an-M\sqrt{n}}^{an+M\sqrt{n}}dx_{j}\int_{\left\vert
\mathbf{x}_{n,j-1}\right\vert \leq K}V(\left\vert \mathbf{x}%
_{n,j-1}\right\vert , \mathbf{x}_{n,j}) \\
&&\qquad \qquad \qquad \qquad \qquad\times \mathbf{E}\left[ \bar{h}%
(g_{j}(e^{\left\vert \mathbf{x}_{n,j-1}\right\vert }W_{n,j})|\mathbf{X}%
_{n,j}=\mathbf{x}_{n,j}\right] \,\prod_{i=j+1}^{n}\mathbf{p}_{X_{i}}\left(
x_{i}\right) d{x}_{i}.
\end{eqnarray*}%
Making the change of variables
\begin{equation*}
v=\left\vert \mathbf{x}_{n,j-1}\right\vert
=x_{n}+x_{n-1}+...+x_{j}; \qquad z_{i}=x_{i}, \quad i=j+1,...,n
\end{equation*}%
and setting%
\begin{equation*}
D_{n,j}\left( K,M\right) =\left\{ \left\vert v\right\vert \leq K,\left\vert
v-x_{j+1}-x_{j+2}-...-x_{n}+an\right\vert \leq M\sqrt{n}\right\},
\end{equation*}%
we arrive at%
\begin{eqnarray*}
&&{b_{n}^{-1}F_{n,j}(h,K,M)} \\
&&\quad \sim \int_{D_{n,j}\left( K,M\right) }e^{-\rho v}F_{n}(v,\mathbf{x}%
_{n,j})\mathbf{E}\left[ \bar{h}(g_{j}(e^{v}W_{n,j})|\mathbf{X}_{n,j}=\mathbf{%
x}_{n,j}\right] \,\prod_{i=j+1}^{n}\mathbf{p}_{X_{i}}\left( x_{i}\right) dx_{i}dv \\
&&\quad \sim \int_{\left\vert v\right\vert \leq K}e^{-\rho v}\mathbf{E}\left[
Y_{n,j}\left( v\right) \bar{h}(g_{j}(e^{v}W_{n,j});\sigma G_{n,j}\in \left[
-M,M\right] \right] dv.
\end{eqnarray*}%
It completes the proof.
\end{proof}

$\newline
$ Observe that by monotonicity
\begin{equation}
\lim_{n\rightarrow \infty }W_{n,j}=\lim_{n\rightarrow \infty }\frac{%
1-f_{n,j}\left( 0\right) }{e^{S_{n}-S_{j}}}=W_{j}\quad \text{a.s.}
\label{monot}
\end{equation}%
and $W_{j}\overset{d}{=}W,\,j=1,2,...$ where $\mathbf{P}(W\in (0,1])=1$ in
view of conditions (\ref{Mom}) and Theorem 5 in \cite{at} II.

We can state now the key result:

\begin{lemma}
\label{L_condbig} Assume that Hypotheses A and B are valid and let $g$ be
the function satisfying (\ref{As_f11}). Then,
\begin{eqnarray*}
&&\lim_{n\rightarrow 0}\bigg\vert b_{n}^{-1}\mathbf{E}[Y_{j}Y_{j,n}\left(
1-f_{0,n}(0)\right) e^{-\rho S_{n}};X_{j}\geq \delta an] \\
&&\quad \qquad -\mathbf{E}\left[ Y_{j}e^{-\rho S_{j-1}}\int_{-\infty
}^{\infty }Y_{n,j}(v)\left( 1-f_{0,j-1}(g_{j}(e^{v}W_{n,j}))\right) e^{-\rho
v}dv\right] \bigg\vert=0
\end{eqnarray*}%
where $(W_{n,j},f_{k}:k\geq j+1)$, $g_{j}$ and $(S_{j-1},f_{0,j-1})$ are
independent and%
\begin{eqnarray}
&&0<\lim_{n\rightarrow \infty }\mathbf{E}\left[ e^{-\rho
S_{j-1}}\int_{-\infty }^{\infty }\left(
1-f_{0,j-1}(g_{j}(e^{v}W_{n,j}))\right) e^{-\rho v}dv\right]  \notag \\
&&\qquad =\mathbf{E}\left[ e^{-\rho S_{j-1}}\int_{-\infty }^{\infty }\left(
1-f_{0,j-1}(g_{j}(e^{v}W_{j}))\right) e^{-\rho v}dv\right] <\infty .
\label{PositFin}
\end{eqnarray}
\end{lemma}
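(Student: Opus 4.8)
The plan is to travel from the left-hand expectation to the right-hand one along the identity chain displayed just before Lemma~\ref{limO}: trade the inner part for $F_{n,j}(f_{0,j-1},K,M)$ via the truncation lemmas of the preceding subsections, apply Lemma~\ref{limO}, then push it from a deterministic generating function $h$ to the random composition $h=f_{0,j-1}$ by dominated convergence, with $N,K,M$ sent to infinity (in that order) only at the end. Since $(Y_{j},S_{j-1},f_{0,j-1})$ is independent of $(f_{k},X_{k}:k\ge j)$, that chain writes $\mathbf E[Y_{j}Y_{j,n}(1-f_{0,n}(0))e^{-\rho S_{n}};X_{j}\ge\delta an]$ as $\mathbf E[Y_{j}e^{-\rho S_{j-1}}\mathbf G_{n}(f_{0,j-1})]$, where for a frozen generating function $h$ the quantity $\mathbf G_{n}(h)$ is the corresponding inner expectation over $(f_{k},X_{k}:k\ge j)$. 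Using only the crude bound $0\le 1-h(\cdot)\le1$ (so that all error terms stay uniform in $h$), Lemma~\ref{L_remF2} restricts $\mathbf G_{n}(h)$ to $|S_{n}-S_{j-1}|\le K$, Lemma~\ref{Nlemma}(ii) restricts $X_{j}$ to $[an-M\sqrt n,an+M\sqrt n]$ (where $|S_{n}|\le N+K$ thanks to Lemma~\ref{L_Sj}), and, because on this set $e^{S_{n,j}}W_{n,j}=1-f_{n,j}(0)\le e^{S_{n}-S_{j}}\le e^{K-an+M\sqrt n}\to0$, one may replace $f_{n,j}(0)=1-e^{S_{n,j}}W_{n,j}$ by $\exp\{-e^{S_{n,j}}W_{n,j}\}$ at the cost of an error $O((1-f_{n,j}(0))^{2})$, negligible even after the factor $e^{S_{j}}$ coming from $h'(1)f_{j}'(1)$. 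This turns $b_{n}^{-1}\mathbf E[Y_{j}Y_{j,n}(1-f_{0,n}(0))e^{-\rho S_{n}};X_{j}\ge\delta an]$ into $\mathbf E[Y_{j}e^{-\rho S_{j-1}}b_{n}^{-1}F_{n,j}(f_{0,j-1},K,M)]+\varepsilon_{N,K,M,n}$.

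For every deterministic $h$, Lemma~\ref{limO} gives $b_{n}^{-1}F_{n,j}(h,K,M)-O_{n,j}(h,K,M)\to0$. Conditioning on $f_{0,j-1}$ and applying this $\omega$-wise, the passage to $h=f_{0,j-1}$ follows by dominated convergence, because $|Y_{j}|e^{-\rho S_{j-1}}$ is integrable ($\mathbf E[e^{-\rho S_{j-1}}]=m^{-(j-1)}$) and, again from $1-h(\cdot)\le1$, both $b_{n}^{-1}F_{n,j}(h,K,M)$ and $O_{n,j}(h,K,M)$ are bounded uniformly in $h,n$ by a constant $C(K,M)$: indeed $b_{n}^{-1}F_{n,j}(h,K,M)\le Ce^{\rho K}b_{n}^{-1}\mathbf P(B_{j,n})$, and $b_{n}^{-1}\mathbf P(B_{j,n})$ stays bounded since, given $X_{j}$ in the window, $B_{j,n}$ confines $S_{n}-S_{j}$ to an interval of length $2K$, which under $\mathbf P$ (finite variance, by (\ref{Tail1})) has probability $O(n^{-1/2})$ by Esseen's concentration inequality, while $\mathbf P(X_{j}\in\text{window})=O(\sqrt n\,b_{n})$ by (\ref{densunif}); the bound for $O_{n,j}$ is analogous using the CLT for $\sigma G_{n,j}$. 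Hence $b_{n}^{-1}\mathbf E[Y_{j}e^{-\rho S_{j-1}}F_{n,j}(f_{0,j-1},K,M)]-\mathbf E[Y_{j}e^{-\rho S_{j-1}}O_{n,j}(f_{0,j-1},K,M)]\to0$. It remains to pass from $O_{n,j}(f_{0,j-1},K,M)$ to the target by dropping the indicator $\{\sigma G_{n,j}\in[-M,M]\}$ and extending the $v$-integral to $\mathbb R$: the indicator costs $\le2Ke^{\rho K}C\,\mathbf E[|Y_{j}|e^{-\rho S_{j-1}}]\,\mathbf P(\sigma G_{n,j}\notin[-M,M])\to0$ as $n\to\infty$ then $M\to\infty$ (since $\sigma G_{n,j}\Rightarrow\mathcal N(0,\sigma^{2})$), and the tail $v>K$ of the integral is $\le\rho^{-1}e^{-\rho K}C\,\mathbf E[|Y_{j}|e^{-\rho S_{j-1}}]\to0$.

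The genuinely delicate part — and the step I expect to be the main obstacle — is the tail $v\to-\infty$, where $e^{-\rho v}\to\infty$ and the naive estimate $1-f_{0,j-1}(g_{j}(\mu))\le f_{0,j-1}'(1)(1-g_{j}(\mu))=e^{S_{j-1}}(1-g_{j}(\mu))$ is useless, since it produces the factor $e^{(1-\rho)S_{j-1}}$ with infinite $\mathbf P$-expectation (because $\varphi(1)=\infty$ under Hypothesis~A). Instead I would realise $g_{j}$ as the Laplace transform of a random law $\mathcal L_{j}$ and write, on an enlarged space, $1-f_{0,j-1}(g_{j}(\mu))=1-\mathbf E[g_{j}(\mu)^{Z_{j-1}}]=\mathbf E^{\mathrm{aux}}[1-e^{-\mu\Sigma}]$, where $Z_{j-1}$ has generating function $f_{0,j-1}$ and $\Sigma=\sum_{i=1}^{Z_{j-1}}Y_{j}^{(i)}$ with $Y_{j}^{(i)}$ i.i.d.\ of law $\mathcal L_{j}$. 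Then $1-e^{-\mu\Sigma}\le\min(1,\mu\Sigma)$, the substitution $u=e^{v}W_{n,j}\Sigma$ gives $\int_{-\infty}^{\infty}e^{-\rho v}\min(1,e^{v}W_{n,j}\Sigma)\,dv=(W_{n,j}\Sigma)^{\rho}/(\rho(1-\rho))$ (the constants $1/(1-\rho)$ and $1/\rho$ being finite precisely because $\rho\in(0,1)$), and two applications of Jensen's inequality give $\mathbf E^{\mathrm{aux}}[\Sigma^{\rho}]\le(e^{S_{j-1}}m_{j})^{\rho}$ and $\mathbf E^{\ast}[m_{j}^{\rho}]\le(\mathbf E^{\ast}[m_{j}])^{\rho}\le1$, where $m_{j}=\int u\,\mathcal L_{j}(du)$ and the mean-$\le1$ bound $\mathbf E^{\ast}[m_{j}]\le1$ follows from Hypothesis~B' and Fatou, since $\mathbf E^{\ast}[1-g(\lambda)]=\lim_{y\to\infty}\mathbf E[1-e^{-\lambda\xi(\mathfrak e)/y}\mid f'(1;\mathfrak e)=y]\le\lambda$. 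As $W_{n,j}\le1$ always, this bounds $\int_{-\infty}^{\infty}e^{-\rho v}(1-f_{0,j-1}(g_{j}(e^{v}W_{n,j})))\,dv$ by $e^{\rho S_{j-1}}W_{n,j}^{\rho}m_{j}^{\rho}/(\rho(1-\rho))$, whence $\mathbf E[e^{-\rho S_{j-1}}\int_{-\infty}^{\infty}(1-f_{0,j-1}(g_{j}(e^{v}W_{n,j})))e^{-\rho v}\,dv]\le\mathbf E[W_{n,j}^{\rho}]\,\mathbf E^{\ast}[m_{j}^{\rho}]/(\rho(1-\rho))\le(\rho(1-\rho))^{-1}<\infty$. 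This simultaneously controls the $v\to-\infty$ tail uniformly in $n$ (closing the comparison) and delivers the finiteness asserted in (\ref{PositFin}).

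Finally, letting $n\to\infty$ in this integral, the a.s.\ monotone convergence $W_{n,j}\downarrow W_{j}$ of (\ref{monot}), the a.s.\ continuity of $g_{j}$ and $f_{0,j-1}$, and the dominating function just obtained give, by dominated convergence, $\lim_{n}\mathbf E[e^{-\rho S_{j-1}}\int_{-\infty}^{\infty}(1-f_{0,j-1}(g_{j}(e^{v}W_{n,j})))e^{-\rho v}dv]=\mathbf E[e^{-\rho S_{j-1}}\int_{-\infty}^{\infty}(1-f_{0,j-1}(g_{j}(e^{v}W_{j})))e^{-\rho v}dv]$; positivity of this limit follows from $\mathbf P(W_{j}\in(0,1])=1$ (conditions (\ref{Mom})), from $g_{j}(\lambda)<1$ for $\lambda>0$, and from $f_{0,j-1}(1)=1$. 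Assembling the three reductions of the previous paragraphs and sending $K$, then $M$, then $n$ to infinity proves the limit identity of the lemma. Everything here is routine except the $v\to-\infty$ estimate, where the crude linearisation breaks down and one must use at once the boundedness and the linear‑near‑zero behaviour of $1-f_{0,j-1}(g_{j}(\cdot))$ — through the $\min$, the exact value of the resulting integral, and the exponent $\rho<1$ — to recover only the factor $e^{\rho S_{j-1}}$, which the external $e^{-\rho S_{j-1}}$ then exactly absorbs.
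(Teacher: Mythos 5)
Your proposal is correct and follows the same architecture as the paper's proof (reduction to the event $\mathcal{T}_{N,K,M}(j)$ via Lemmas \ref{L_Sj}, \ref{L_remF2} and \ref{Nlemma}(ii), conditioning on $\mathcal{F}_{j-1}$ to invoke Lemma \ref{limO} for a frozen generating function $h$, dominated convergence to reach $h=f_{0,j-1}$, then sending $M,K,N\to\infty$), but it diverges from the paper on two points, one cosmetic and one substantive. Cosmetically, where you replace $f_{n,j}(0)$ by $\exp\{-e^{S_{n,j}}W_{n,j}\}$ with an $O((1-f_{n,j}(0))^2)$ error and absorb the resulting factor $e^{S_j}$, the paper instead sandwiches $e^{-(1+\varepsilon)(1-f_{n,j}(0))}\le f_{n,j}(0)\le e^{-(1-f_{n,j}(0))}$ and sends $\varepsilon\to0$ by monotonicity; both are fine, and your error estimate is indeed exponentially negligible on the relevant event. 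The substantive divergence is the finiteness in (\ref{PositFin}) and the $v\to-\infty$ tail control. The paper obtains these almost for free: it takes $Y_j=Y_{n,j}\equiv1$, notes that the full quantity is dominated by $b_n^{-1}\mathbf{E}[(1-f_{0,n}(0))e^{-\rho S_n}]\le C$ (Corollary \ref{C_bound}), and concludes by monotone convergence as $N,K,M\to\infty$ — no direct estimate of the integral is ever made. You instead compute: realize $g_j$ as the Laplace transform of a random law $\mathcal{L}_j$, write $1-f_{0,j-1}(g_j(\mu))=\mathbf{E}^{\mathrm{aux}}[1-e^{-\mu\Sigma}]\le\mathbf{E}^{\mathrm{aux}}[\min(1,\mu\Sigma)]$ with $\Sigma=\sum_{i=1}^{Z_{j-1}}Y_j^{(i)}$, evaluate $\int_{\mathbb{R}}e^{-\rho v}\min(1,e^vc)\,dv=c^{\rho}/(\rho(1-\rho))$, and close with two Jensen applications and $\mathbf{E}^{\ast}[m_j]\le1$, so that the external $e^{-\rho S_{j-1}}$ exactly cancels the $e^{\rho S_{j-1}}$ produced. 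This is a genuinely different, quantitative argument: it yields the explicit bound $1/(\rho(1-\rho))$, makes the role of $\rho\in(0,1)$ transparent (the paper's route hides this in Corollary \ref{C_bound}), and is self-contained. Your bound also supplies a dominating function uniform in $n$ (via $W_{n,j}\le1$ and monotonicity of $\lambda\mapsto1-f_{0,j-1}(g_j(\lambda))$) that justifies both the $n\to\infty$ passage from $W_{n,j}$ to $W_j$ and the $K\to\infty$ closing of the $v$-integral; I'd recommend stating that dominating function explicitly rather than leaving it implicit in ``closing the comparison.'' Aside from that, the argument is sound.
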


\begin{proof}
Introduce the event
\begin{equation*}
\mathcal{T}_{N,K,M}(j)=\left\{ \left\vert S_{j-1}\right\vert \leq
N,\,\left\vert S_{n}-S_{j-1}\right\vert \leq K,\,\left\vert
X_{j}-an\right\vert \leq M\sqrt{n}\right\} .
\end{equation*}

Recalling that $Y_{j}$ and $Y_{j,n}$ are bounded, to prove the lemma it is
sufficient to study only the quantity%
\begin{eqnarray*}
&&\mathbf{E}\left[ Y_{j}Y_{j,n}\left( 1-f_{0,n}(0)\right) e^{-\rho S_{n}};%
\mathcal{T}_{N,K,M}(j)\right] \\
&&\qquad =\mathbf{E}[Y_{j}Y_{n,j}[1-f_{0,j-1}(f_{j}(f_{n,j}(0)))]e^{-\rho
S_{j}}e^{-\rho S_{n,j}};\mathcal{T}_{N,K,M}(j)].
\end{eqnarray*}

Moreover, we may assume without loss of generality that $Y_{j}$ and $Y_{j,n}$
are nonnegative. The general case may be considered by writing $%
Y_{j}Y_{j,n}=\left( Y_{j}Y_{j,n}\right) ^{+}-\left( Y_{j}Y_{j,n}\right)
^{-}, $ where $x^{+}=\max \left( x,0\right) $ and $x^{-}=-\min
\left(x,0\right)$.

Clearly,%
\begin{equation*}
\left\{ X_{j}\geq an-M\sqrt{n},\left\vert S_{n}-S_{j-1}\right\vert \leq
K\right\} \subset \left\{ S_{n}-S_{j}\leq K-an+M\sqrt{n}\right\} .
\end{equation*}%
This, in view of the inequality
\begin{equation*}
e^{S_{n,j}}W_{n,j}=1-f_{n,j}(0)\leq e^{S_{n,j}}
\end{equation*}%
and the representation $e^{-x}=1-x+o(x),\,x\rightarrow 0,$ means that if the
event $\mathcal{T}_{N,K,M}(j)$ occurs then, for any $\varepsilon >0$ there
exists $n_{0}=n_{0}(\varepsilon )$ such that for all $n\geq n_{0}$%
\begin{equation*}
e^{-(1+\varepsilon )\left( 1-f_{n,j}(0\right) )}\leq f_{n,j}(0)\leq
e^{-\left( 1-f_{n,j}(0\right) )}.
\end{equation*}%
As a result we have%
\begin{eqnarray*}
&&\mathbf{E}\left[ Y_{j}Y_{n,j}\left( 1-f_{0,j-1}(f_{j}(e^{-\left(
1-f_{n,j}(0\right) )}))\right) e^{-\rho S_{j-1}}e^{-\rho S_{n,j-1}};\mathcal{%
T}_{N,K,M}(j)\right]  \\
&&\quad \leq b_{n}^{-1}\mathbf{E}\left[ Y_{j}Y_{j,n}\left(
1-f_{0,n}(0)\right) e^{-\rho S_{n}};\mathcal{T}_{N,K,M}(j)\right]  \\
&&\quad \leq \mathbf{E}\left[ Y_{j}Y_{n,j}\left(
1-f_{0,j-1}(f_{j}(e^{-\left( 1+\varepsilon \right) \left( 1-f_{n,j}(0\right)
)}))\right) e^{-\rho S_{j-1}}e^{-\rho S_{n,j-1}};\mathcal{T}_{N,K,M}(j)%
\right] .
\end{eqnarray*}%
Hence, denoting by $\mathcal{F}_{j-1}$ the $\sigma $-algebra generated by
the sequence
\begin{equation*}
\left( f_{1},...,f_{j-1};S_{1},...,S_{j-1}\right) ,
\end{equation*}%
we set%
\begin{equation*}
F_{n,j}(h,K,M;\varepsilon )=\mathbf{E}\left[ e^{-\rho (S_{n}-S_{j-1})}{Y}%
_{n,j}\bar{h}\left( f_{j}\left( \exp \left\{ -\left( 1+\varepsilon \right)
e^{S_{n}-S_{j}}W_{n,j}\right\} \right) \right) \,;B_{j,n}\right] ,
\end{equation*}%
\begin{equation*}
O_{n,j}(h,K,M;\varepsilon )=\int_{-K}^{K}e^{-\rho v}dv\mathbf{E}\left[
Y_{n,j}(v)\bar{h}\left( g_{j}\left( \left( 1+\varepsilon \right)
e^{v}W_{n,j}\right) \right) \,;\sigma G_{n,j}\in \left[ -M,M\right] \right]
\end{equation*}%
and introduce the random variables
\begin{equation*}
\hat{F}_{n,j}(f_{0,j-1},K,M;\varepsilon )=\mathbf{E}\left[
F_{n,j}(f_{0,j-1},K,M;\varepsilon )|\mathcal{F}_{j-1}\right]
\end{equation*}%
and%
\begin{equation*}
\hat{O}_{n,j}(f_{0,j-1},K,M;\varepsilon )=\mathbf{E}\left[
O_{n,j}(f_{0,j-1},K,M;\varepsilon )|\mathcal{F}_{j-1}\right] .
\end{equation*}%
We get from the previous inequalities
\begin{eqnarray}
&&\mathbf{E}\left[ Y_{j}e^{-\rho S_{j-1}}\hat{F}_{n,j}(f_{0,j-1},K,M;0);%
\left\vert S_{j-1}\right\vert \leq N\right]   \notag \\
&&\qquad \leq b_{n}^{-1}\mathbf{E}\left[ Y_{j}Y_{j,n}\left(
1-f_{0,n}(0)\right) e^{-\rho S_{n}};\mathcal{T}_{N,K,M}(j)\right]
\label{inek} \\
&&\qquad \quad \leq \mathbf{E}[Y_{j}e^{-\rho S_{j-1}}\hat{F}%
_{n,j}(f_{0,j-1},K,M;\varepsilon );\left\vert S_{j-1}\right\vert \leq N].
\notag
\end{eqnarray}%
Moreover the dominated convergence theorem and Lemma \ref{limO} give for any
fixed $\alpha \in \{0,\varepsilon \}$,
\begin{eqnarray*}
&&\limsup_{n\rightarrow \infty }\big\vert b_{n}^{-1}\mathbf{E}[Y_{j}e^{-\rho
S_{j-1}}\hat{F}_{n,j}(f_{0,j-1},K,M;\alpha );\left\vert S_{j-1}\right\vert
\leq N] \\
&&\qquad \qquad \qquad -\mathbf{E}[Y_{j}e^{-\rho S_{j-1}}\hat{O}%
_{n,j}(f_{0,j-1},K,M;\alpha );\left\vert S_{j-1}\right\vert \leq N]\big\vert%
=0.
\end{eqnarray*}%
Finally, $Y_{j}$ and $Y_{n,j}(v)$ are bounded (say by $1$ for convenience)
and we get
\begin{eqnarray*}
&&\limsup_{n\rightarrow \infty }\big\vert\mathbf{E}[Y_{j}e^{-\rho S_{j-1}}%
\hat{O}_{n,j}(f_{0,j-1},K,M;\varepsilon );\left\vert S_{j-1}\right\vert \leq
N] \\
&&\qquad \qquad \qquad -\mathbf{E}[Y_{j}e^{-\rho S_{j-1}}\hat{O}%
_{n,j}(f_{0,j-1},K,M;0);\left\vert S_{j-1}\right\vert \leq N]\big\vert \\
&&\quad \leq \lim \sup_{n\rightarrow \infty }\mathbf{E}\bigg[e^{-\rho
S_{j-1}}\int_{-K}^{K}e^{-\rho v}dv\mathbf{E}\big[f_{0,j-1}\big(g_{j}\left(
(1+\varepsilon )e^{-v}W_{n,j}\right) \big) \\
&&\qquad \qquad \qquad \qquad \qquad \qquad \qquad -f_{0,j-1}\left(
g_{j}\left( e^{-v}W_{n,j}\right) \right) \big];\left\vert S_{j-1}\right\vert
\leq N\bigg] \\
&&\quad =\mathbf{E}\bigg[e^{-\rho S_{j-1}}\int_{-K}^{K}e^{-\rho v}dv\mathbf{E%
}\big[f_{0,j-1}\left( g_{j}\left( (1+\varepsilon )e^{-v}W_{j}\right) \right)
- \\
&&\qquad \qquad \qquad \qquad \qquad \qquad \qquad f_{0,j-1}\left(
g_{j}\left( e^{-v}W_{j}\right) \right) \big];\left\vert S_{j-1}\right\vert
\leq N\bigg]
\end{eqnarray*}%
goes to $0$ as $\epsilon \rightarrow 0$ by monotonicity. We combine the last
limits with (\ref{inek}) to get
\begin{eqnarray}
&&\limsup_{n\rightarrow \infty }\big\vert b_{n}^{-1}\mathbf{E}\left[
Y_{j}Y_{n,j}\left( 1-f_{0,n}(0)\right) e^{-\rho S_{n}};\mathcal{T}_{N,K,M}(j)%
\right]   \notag \\
&&\qquad \qquad -\mathbf{E}[Y_{j}e^{-\rho S_{j-1}}\hat{O}%
_{n,j}(f_{0,j-1},K,M;0);\left\vert S_{j-1}\right\vert \leq N]\big\vert=0.
\label{Addit}
\end{eqnarray}%
By Corollary \ref{C_TimeSpace} and Lemmas \ref{Nlemma} (ii), \ref{L_Sj}, and %
\ref{L_remF2}, the fact that $Y_{j}$ and $Y_{n,j}$ are bounded ensure that
\begin{eqnarray}
&&\mathbf{E}\left[ Y_{j}Y_{j,n}\left( 1-f_{0,n}(0)\right) e^{-\rho
S_{n}};X_{j}\geq \delta an\right]   \notag \\
&=&\mathbf{E}[Y_{j}Y_{j,n}\left( 1-f_{0,n}(0)\right) e^{-\rho
S_{n}};\left\vert S_{j-1}\right\vert \leq N,X_{j}\geq \delta an]+\varepsilon
_{N,n}b_{n}  \notag \\
&=&\mathbf{E}[Y_{j}Y_{n,j}\left( 1-f_{0,n}(0)\right) e^{-\rho
S_{n}};\left\vert S_{j-1}\right\vert \leq N,\left\vert
S_{n}-S_{j-1}\right\vert \leq K,X_{j}\geq \delta an]+\varepsilon
_{N,K,n}b_{n}  \notag \\
&=&\mathbf{E}[Y_{j}Y_{n,j}\left( 1-f_{0,n}(0)\right) e^{-\rho S_{n}};%
\mathcal{T}_{N,K,M}(j)]+\varepsilon _{N,K,M,n}(j)b_{n},  \label{MUMU1}
\end{eqnarray}%
where
\begin{equation}
\lim_{N\rightarrow \infty }\limsup_{K\rightarrow \infty
}\limsup_{M\rightarrow \infty }\limsup_{n\rightarrow \infty }\left\vert
\varepsilon _{N,K,M,n}(j)\right\vert =0.  \label{MUMU2}
\end{equation}%
Taking now $Y_{j}=Y_{n,j}\equiv 1$, adding that
$\mathbf{E}\left[ \left( 1-f_{0,n}(0)\right) e^{-\rho
S_{n}}\right] =O\left( b_{n}\right) $ by Corollary \ref{C_bound}
and recalling (\ref{Addit}), we deduce, again by monotonicity
that \begin{eqnarray*} &&\lim_{N\rightarrow
\infty }\lim_{K\rightarrow \infty }\lim_{M\rightarrow
\infty }\limsup_{n\rightarrow \infty }\mathbf{E}[e^{-\rho S_{j-1}}\hat{O}%
_{n,j}(f_{0,j-1},K,M;0);\left\vert S_{j-1}\right\vert \leq N] \\
&&\qquad =\mathbf{E}\left[ e^{-\rho S_{j-1}}\int_{-\infty }^{\infty }\left(
1-f_{0,j-1}(g_{j}(e^{v}W_{j}))\right) e^{-\rho v}dv\right]  \\
&&\qquad \leq \limsup_{n\rightarrow \infty }b_{n}^{-1}\mathbf{E}\left[
\left( 1-f_{0,n}(0)\right) e^{-\rho S_{n}}\right] \leq C<\infty ,
\end{eqnarray*}
proving, in particular, the estimate from above in
(\ref{PositFin}). This, in turn, implies for arbitrary uniformly
bounded $Y_{j}$ and $Y_{n,j}$,
\begin{eqnarray*}
&&\limsup_{n\rightarrow \infty }\mathbf{E}[Y_{j}e^{-\rho S_{j-1}}\hat{O}%
_{n,j}(f_{0,j-1},\infty ,\infty ;0)] \\
&&\qquad \qquad \leq C\mathbf{E}\left[ e^{-\rho S_{j-1}}\int_{-\infty
}^{\infty }\left( 1-f_{0,j-1}(g_{j}(e^{v}W_{j}))\right) e^{-\rho v}dv\right]
<\infty
\end{eqnarray*}%
and
\begin{eqnarray*}
&&\limsup_{n\rightarrow \infty }\big\vert b_{n}^{-1}\mathbf{E}\left[
Y_{j}Y_{j,n}\left( 1-f_{0,n}(0)\right) e^{-\rho S_{n}};X_{j}\geq \delta an%
\right] - \\
&&\qquad \qquad \qquad \qquad \qquad \qquad -\mathbf{E}[Y_{j}e^{-\rho
S_{j-1}}\hat{O}_{n,j}(f_{0,j-1},\infty ,\infty ;0)]\big\vert=0.
\end{eqnarray*}%
It yields the first part of the Lemma. We have already checked
the finiteness of the limit in (\ref{PositFin}).
Positivity follows from conditions (\ref{Mom}), since under these
conditions $W>0$ with probability 1 according to
Theorem~5~\cite{at},~II.  This gives the whole result.
\end{proof}

\section{Proof of the Theorems and the Corollary}

\label{ProofMT}

Now we have an important corollary, which, in fact, proves Theorem \ref%
{T_Extin} with the explicit form of the constant $C_{0}$ mentioned in the
statement of the theorem. \newline

\begin{proof}[Proof of Theorem 1]
We assume that Hypotheses A and B are valid. It follows from (\ref%
{Remainder1}) that for each fixed $j$%
\begin{equation*}
\mathbf{E}\left[ (1-f_{0,n}(0))\exp (-\rho S_{n});\mathcal{D}_{N}(j)\right] =%
\mathbf{E}[\left( 1-f_{0,n}(0)\right) e^{-\rho S_{n}};X_{j}\geq \delta
an]+\varepsilon _{N,n}b_{n}.
\end{equation*}%
Using this fact and Lemmas \ref{L_condbig} and \ref{L_Negl2} we get%
\begin{eqnarray*}
&&\lim_{n\rightarrow \infty }m^{-n}b_{n}^{-1}\mathbb{P}\left( Z_{n}>0\right)
=\lim_{n\rightarrow \infty }m^{-n}b_{n}^{-1}\mathbb{E}\left[ (1-f_{0,n}(0))%
\right] \\
&&\qquad =\lim_{n\rightarrow \infty }b_{n}^{-1}\mathbf{E}\left[
(1-f_{0,n}(0))\exp (-\rho S_{n})\right] =C_{0}.
\end{eqnarray*}%
To complete the proof it remains to observe that in view of (\ref{Tail1})
\begin{eqnarray*}
b_{n} &=&\beta \frac{\mathbf{P}\left( X>an\right) }{an}\sim \frac{1}{m}\frac{%
l_{0}(an)}{\left( an\right) ^{\beta +1}}=\frac{1}{m}\frac{l_{0}(an)}{\left(
an\right) ^{\beta +1}}e^{-\rho an}e^{\rho an} \\
&\sim &\frac{\rho }{m}e^{\rho an}\int_{an}^{\infty }p_{X}(x)dx=\frac{\rho }{m%
}e^{\rho an}\mathbb{P}\left( X>an\right) .
\end{eqnarray*}%
Thus,
\begin{equation*}
\mathbb{P}\left( Z_{n}>0\right) \sim C_{0}m^{n}b_{n}\sim C_{0}\rho m^{n-1}%
\mathbb{P}\left( X>an\right) e^{an\rho }
\end{equation*}%
where, recalling that $g_{j},W_{j}$ and $f_{0,j-1}$ are independent
\begin{eqnarray}
C_{0} &=&\sum_{j=1}^{\infty }\mathbf{E}\left[ e^{-\rho S_{j-1}}\int_{-\infty
}^{\infty }\left( 1-f_{0,j-1}(g_{j}(e^{v}W_{j}))\right) e^{-\rho v}dv\right]
\label{C_finsurviv} \\
&=&\sum_{j=1}^{\infty }\left( \mathbb{E}\left[ e^{\rho X}\right] \right)
^{-j+1}\int_{-\infty }^{\infty }\mathbb{E}\left[
1-f_{0,j-1}(g_{j}(e^{v}W_{j}))\right] e^{-\rho v}dv.  \notag
\end{eqnarray}%
The proof of the first Theorem is achieved.
\end{proof}

$\newline
$

\begin{proof}[Proof of Theorem 2]
Let
\begin{equation*}
W_{n,j}(s)=\frac{1-f_{n,j}(s)}{e^{S_{n}-S_{j}}},\,s\in \lbrack 0,1).
\end{equation*}%
By monotonicity
\begin{equation*}
\lim_{n\rightarrow \infty }W_{n,j}(s)=W_{j}(s)
\end{equation*}%
and $W_{j}(s)\overset{d}{=}W(s),\,j=1,2,...$ where $\mathbf{P}(W(s)\in
(0,1])=1$ thanks to \cite{at} II Theorem 5.

Similarly to Lemma \ref{L_condbig} one can show that, as $n\rightarrow
\infty $%
\begin{eqnarray*}
&&\lim_{n\rightarrow \infty }b_{n}^{-1}\mathbf{E}[\left( 1-f_{0,n}(s)\right)
e^{-\rho S_{n}}] \\
&&\quad=\lim_{n\rightarrow \infty }b_{n}^{-1}\sum_{j=1}^{\infty }\mathbf{E}%
[\left( 1-f_{0,n}(s)\right) e^{-\rho S_{n}};X_{j}\geq \delta an] \\
&&\quad=\mathbf{E}\left[ e^{-\rho S_{j-1}}\int_{-\infty }^{\infty }\left(
1-f_{0,j-1}(g(e^{v}W(s)))\right) e^{-\rho v}dv\right] =\Omega _{0}(s).
\end{eqnarray*}

Hence we get%
\begin{eqnarray*}
\lim_{n\rightarrow \infty }\mathbb{E}\left[ s^{Z_{n}}|Z_{n}>0\right]
&=&1-\lim_{n\rightarrow \infty }\frac{\mathbf{E}[\left( 1-f_{0,n}(s)\right)
e^{-\rho S_{n}}]}{\mathbf{E}[\left( 1-f_{0,n}(0)\right) e^{-\rho S_{n}}]} \\
&=&1-C_{0}^{-1}\Omega _{0}(s)=:\Omega (s).
\end{eqnarray*}
Theorem \ref{T_condEnd} is proved.
\end{proof}

$\newline
$

\begin{proof}[Proof of Theorem 3]
Coming back to the original probability $\mathbb{P}$, Lemma \ref{L_condbig}
yields
\begin{eqnarray*}
&&\lim_{n\rightarrow \infty }\bigg\vert b_{n}^{-1}m^{-n}\mathbb{E}%
[Y_{j}Y_{j,n}\mathbb{P}_{\mathfrak{e}}(Z_{n}>0);X_{j}\geq \delta an] \\
&&\quad \qquad -m^{-{j-1}}\mathbb{E}\left[ Y_{j}\int_{-\infty }^{\infty
}Y_{n,j}(v)\left( 1-f_{0,j-1}(g_{j}(e^{v}W_{n,j}))\right) e^{-\rho v}dv%
\right] \bigg\vert=0
\end{eqnarray*}

Recalling that $\mathbb{P}(Z_{n}>0)\sim C_{0}m^{n}b_{n}$ as $n\rightarrow
\infty$ ensures that
\begin{eqnarray*}
&&\lim_{n\rightarrow \infty }\bigg\vert\mathbb{E}[Y_{j}Y_{j,n};X_{j}\geq
\delta an|Z_{n}>0] \\
&&\quad \qquad -C_{0}^{-1}m^{-{j-1}}\mathbb{E}\left[ Y_{j}\int_{-\infty
}^{\infty }Y_{n,j}(v)\left( 1-f_{0,j-1}(g_{j}(e^{v}W_{n,j}))\right) e^{-\rho
v}dv\right] \bigg\vert=0
\end{eqnarray*}%
We obtain the first part of the Theorem by letting $Y_{j}=1$ and $Y_{j,n}=1$
and using (\ref{monot}), whereas the second part comes by dividing the last %
displayed formula by $\mathbb{P}(X_{j}\geq \delta
an|Z_{n}>0)$.
\end{proof}

$\newline
$

\begin{proof}[Proof of the Corollary]
We first check that conditionally on $Z_{n}>0$, there is only one big jump.
Recalling from Section 5.2 the notation $\mathcal{C}_{N}=\left\{ -N<S_{\tau
_{n}}\leq S_{n}\leq N+S_{\tau _{n}}<N\right\} $ and the inequality $\mathbf{P%
}_{\mathfrak{e}}\left( Z_{n}>0\right) \exp (-\rho S_{n})\leq~1$ justified by
(\ref{Le1}) we have
\begin{eqnarray*}
&&\mathbb{P}(Z_{n}>0,\cup _{i\neq j}^{n}\{X_{i}\geq an/2,X_{j}\geq an/2\}) \\
&&\quad = m^{n}\mathbf{E}\left[ \mathbf{P}_{\mathfrak{e}}\left(
Z_{n}>0\right) \exp (-\rho S_{n});\cup _{i\neq j}^{n}\{X_{i}\geq
an/2,X_{j}\geq an/2\}\right] \\
&& \quad \leq m^{n}\bigg(\mathbf{E}\left[ \mathbf{P}_{\mathfrak{e}}\left(
Z_{n}>0\right) \exp (-\rho S_{n});\mathcal{\bar{C}}_{N}\right] \\
&& \qquad \qquad \qquad +\mathbf{E}\left[ L_{n}\geq -N,S_{n}\geq N,\cup
_{i\neq j}^{n}\{X_{i}\geq an/2,X_{j}\geq an/2\}\right] \bigg).
\end{eqnarray*}%
Then Lemma \ref{L_SmallEnd} and the limiting relation (\ref{twojumps})
ensure that
\begin{equation*}
\limsup_{n\rightarrow \infty }(b_{n}m^{n})^{-1}\mathbb{P}\left( Z_{n}>0,\cup
_{i\neq j}^{n}\{X_{i}\geq an/2,X_{j}\geq an/2\}\right) =0.
\end{equation*}%
Thus, $\lim_{n\rightarrow \infty }\mathbb{P}(\cup _{i\neq j}^{n}\{X_{i}\geq
an/2,X_{j}\geq an/2\}|Z_{n}>0)=0$. The first part of the Corollary is then a
direct consequence of Theorem 3 (i).

Since $X_{j}=(S_{n}-S_{j-1})-(X_{j+1}+\ldots +X_{n})$, the second part is
obtained from Theorem 3 (ii) with $F(\cdot )=1$, $F_{n-j}(v,x_{j+1},\ldots
,x_{n})=H((v-x_{j+1}\ldots -x_{n}-an)/\sqrt{n})$, where $H$ is measurable
and bounded.
\end{proof}

$\newline
$

\textbf{Acknowledgement.} This work was partially funded by the project
MANEGE `Mod\`{e}les Al\'{e}atoires en \'{E}cologie, G\'{e}n\'{e}tique et
\'{E}volution' 09-BLAN-0215 of ANR (French national research agency), Chair
Modelisation Mathematique et Biodiversite VEOLIA-Ecole
Polytechnique-MNHN-F.X. and the professorial chair Jean Marjoulet. The
second author was also supported by the Program of the Russian Academy of
Sciences \textquotedblleft Dynamical systems and control
theory\textquotedblright .

\end{document}